\newtheorem{theorem}{Theorem}[section]
\newtheorem{corollary}{Corollary}
\newtheorem{lemma}[theorem]{Lemma}
\newtheorem{proposition}{Proposition}
\theoremstyle{definition}
\newtheorem{definition}[theorem]{Definition}
\newtheorem{remark}{Remark}
\newtheorem*{notation}{Notation}
\newcommand{\ep}{\varepsilon}
\newcommand{\R}{\mathbb{R}}
\newcommand{\N}{\mathbb{N}}
\newcommand{\Rn}{\mathbb{R}^N}
\newcommand{\CC}{\mathrm{C}}
\newcommand{\HH}{\mathcal{H}}
\newcommand{\la}{\lambda_\alpha}
\newcommand{\pa}{\varphi_\alpha}
\newcommand{\supp}{\mathop{\mathrm{supp}}\nolimits}
\newcommand{\us}{\overline{u}}
\newcommand{\vm}{\overline{v}}
\newcommand{\vt}{\tilde{v}}
\newcommand{\cu}{c^*}
\newcommand{\psm}{\underline{V}}
\newcommand{\tw}{\tilde{w}}
\newcommand{\tf}{\tilde{f}}
\newcommand{\tV}{\tilde{V}}
\title{Traveling fronts guided by the environment for reaction-diffusion equations}
\author{}
\date{}
\begin{document}

\maketitle

\centerline{\scshape Henri Berestycki }
\medskip
{\footnotesize
 \centerline{CAMS, UMR 8557, EHESS}
   \centerline{190-198 avenue de France, 75244 Paris Cedex 13, France}
} 

\medskip

\centerline{\scshape Guillemette Chapuisat }
\medskip
{\footnotesize
 \centerline{LATP, UMR 7353, Aix-Marseille Universit\'e}
   \centerline{39 rue F. Joliot-Curie, 13453 Marseille Cedex 13, France}
 \centerline{CAMS, UMR 8557, EHESS}
   \centerline{190-198 avenue de France, 75244 Paris Cedex 13, France}
} %
\bigskip

\centerline{Dedicated to Hiroshi Matano on the occasion of his {\em Kanreki}.}

\begin{abstract}
This paper deals with the existence of traveling fronts for the 
reaction-diffusion equation:
$$
\frac{\partial u}{\partial t} - \Delta u =h(u,y)  \qquad t\in \R, \; x=(x_1,y)\in \R^N.
$$
We first consider the case $h(u,y)=f(u)-\alpha g(y)u$ where $f$ is of KPP or bistable type and $\lim_{|y|\rightarrow +\infty}g(y)=+\infty$. This equation comes from a model in population dynamics in which there is spatial spreading as well as phenotypic mutation of a quantitative phenotypic trait that has a locally preferred value. The goal is to understand spreading and invasions in this heterogeneous context. We prove the existence of threshold value $\alpha_0$ and of a nonzero asymptotic profile (a stationary limiting solution) $V(y)$ if and only if $\alpha<\alpha_0$. When this condition is met, we prove the existence of a traveling front. This allows us to completely identify the behavior of the solution of the parabolic problem in the KPP case.

We also study here the case where $h(y,u)=f(u)$ for $|y|\leq L_1$ and $h(y,u) \approx - \alpha u$ for $|y|>L_2\geq L_1$. This equation provides a general framework for a model of cortical spreading depressions in the brain. We prove the existence of traveling front if $L_1$ is large enough and the non-existence if $L_2$ is too small.
\end{abstract}

\bigskip

\section{Introduction}
\label{section: intro}

This paper deals with the existence of bounded traveling fronts for the  
reaction-diffusion equation 
\begin{equation}
\label{main}
\frac{\partial u}{\partial t} - \Delta u =h(y,u) \qquad t\in \R, \; x=(x_1,y)\in \R^N.
\end{equation} 
The function $h$ will be of three different forms in this paper. The first two concern non-linear terms  $h(y,u)=f(u) - \alpha g(y) u$ where $f:\R \rightarrow \R$ is $\CC^1$, and is either of positive type, or of bistable type and $g:\R^{N-1} \rightarrow \R_+$ is $\CC^0$, $g(0)=0$ and $g\xrightarrow{|y|\rightarrow +\infty} +\infty$. 
The existence of traveling front depends on the value of $\alpha>0$. The third case we consider here is when $h(y,u)=f(u)$ for $|y|\leq L_1$ and $ h(y,u)\leq -mu$ for $|y|\geq L_2$ where $0<L_1\leq L_2 <\infty$ are given parameters and $f$ is of bistable form and $h(y,u)+mu \rightarrow 0$ for $|y|\rightarrow +\infty$. We study the existence of traveling fronts depending on the value of $L_1$ and $L_2$.\\

The problems we study in this paper bear some similarities with the question of traveling fronts in cylinders of \cite{BN}. However there are important differences that have to do with the fact that the cross section in \cite{BN} was bounded and only the Neumann condition was considered there. Whereas here, the problem is posed in the whole space and the solution vanish at infinity in directions orthogonal to the direction of propagation.  We follow the same general scheme as in \cite{BN} and in particular make use of the sliding method. But some new ideas are also required. In particular, first, we treat directly the KPP case without the approximation of the KPP non-linearity by a combustion non-linearity as in \cite{BN}. Then in the approach of Berestycki - Nirenberg \cite{BN} to traveling fronts in cylinders for the bistable case, a useful result of H. Matano \cite{HM79} was involved in the proof. Here, we rely on stability ideas but also use energy minimization properties to bound the speed of the solution in the finite domain approximation.  In particular, we do not use the precise exponential behavior that was used in \cite{BN}.
Actually the developments of this method that we present in this paper can be used to somewhat simplify parts of \cite{BN}. They  can also be applied to traveling fronts in cylinder with Robin or Dirichlet boundary conditions. \footnote{The construction of traveling fronts for Neumann and Dirichlet conditions in cylinders given by \cite{Vega93}  appears to be incomplete. Indeed, the continuity of the function $\phi$ on page 515 is not established so that using Dini's Theorem to derive Lemma 3.2 there is not justified.}\\

Equation \eqref{main} in the first case comes from a model in population dynamics \cite{DFP03} that we briefly describe now. Let $u(t,x,v)$ represent the density of individuals at time $t$ and position $x$ that possess some given quantitative phenotypic trait represented by a continuous variable $v\in \R$. 
For example, the latter could be the size of wings or the height of an individual. We assume that individuals follow a brownian motion (i.e. they diffuse) in space with a constant diffusion coefficient $\nu$, reproduce identically and disappear with a growth rate $k(x,v)$ that depends on the position $x$ and on the trait $v$. Furthermore, they also reproduce with mutation that is represented by a kernel $K(x,v,w)$ and disappear due to competition with a constant $L>0$. Thus, one is led to the following equation for $u$:  
\begin{equation}
\label{DP1}
\begin{array}{l}
 \partial_t u(t,x,v) - \nu \Delta_x u(t,x,v) = 
  k(x,v)\, u(t,x,v) \\ \qquad \qquad \qquad 
 + \int_{w}  K(x,v,w) \,u(t,x,w)\,dw 
- u(t,x,v)\, \int_{w}
 L u(t,x,w)\, dw .  
\end{array}
\end{equation}
We assume moreover that there exists a most adapted trait $\phi=\phi(x)$ that may depends on the location $x$. The farther the trait of an individual is from the most adapted trait, the larger the probability of dying and not reproducing.  Thus the growth rate can be written for example as $k(x,v) = a - b\,|v - \phi(x)|^2$ with $a$ and $b>0$.  

Non-local reaction-diffusion equations of this type raise some new difficulties from a mathematical standpoint as shown in \cite{BNPR09}. There, behaviors that are quite different from those in local equations are brought to light. After this paper was completed, we learned that in \cite{ACR} the existence of traveling front was also derived for equation
$$
 \partial_t u(t,x,v) - \nu \Delta_{x,v} u(t,x,v) = 
  k(x,v)\, u(t,x,v)
- u(t,x,v)\, \int_{w}
 L(v,w) u(t,x,w)\, dw . 
$$
This work follows in part the methods of \cite{BNPR09}. As in \cite{BNPR09}, the nonzero limiting stationary state is not prescribed.
 In a forthcoming numerical study \cite{avenir_num}, we study the full equation \eqref{DP1} and we discuss the monotonicity of fronts depending on the value of $a$ and $b$. 

In this paper, we introduce a simplified version of this model that emphasizes propagation guided by the environment. First, we assume that mutations are due to a diffusion process represented by a Brownian motion in the space of trait $v$. Furthermore, we assume that $\phi$ is linear. Then a rotation in the variables $(x_1,y)$ allows one to reduce the problem to the case where the most adapted trait is $y=0$. Therefore we assume $\phi(x)=0$ and \eqref{DP1} can be rewritten as 
\begin{equation}
\label{DP2}
 \partial_t u(t,x,v) - \nu \Delta_{x,v} u(t,x,v) =  (a - b|v|^2)u(t,x,v) 
- u(t,x,v)\, \int_{w} L u(t,x,w)\, dw .
\end{equation}
Lastly we assume that competition is only between individuals sharing the same trait which leads us to equation
\begin{equation}
\label{DP3}
 \partial_t u - \nu \Delta_{x,v}u =  (a -Lu)u - b|v|^2u. 
\end{equation}
Equation \eqref{main} is a  generalization of this equation.  In \cite{DFP03}, the authors observe numerically a generalized transition front spreading along the graph of $\phi$ for equation \eqref{DP1} (see \cite{BH06, BH07, BH12, HM12} or \cite{Shen} for the definition of generalized transition fronts). Here we want to prove theoretically (i) that there exists such a front for equation \eqref{main} at least for some values of the parameter $\alpha>0$ and (ii) that extinction occurs if $\alpha$ is too large. The latter condition can be interpreted as saying that the ``area'' of adapted traits is too thin compared to the diffusion. To remain consistent with the biological motivation, we only consider here non-negative and bounded solutions of \eqref{main}.\\

Other types of models related to this one have been proposed in the literature. For example, the model developed by Kirkpatrick and Barton in 1997 \cite{KB97} also studies the evolution of a population and of its mean trait. The main difference is that they have a system in $u$ and $v$ where $u$ represents the population and $v$ the mean trait is described by a specific equation. This model has been further explored00 \cite{FHB08, HBFF11}. It is worth noting that these models use the same type of non-linearity for the adaptation to the environment and model the mutation with the Laplace operator as well rather than integral operators.\\

This type of reaction-diffusion process in heterogeneous media also arises in many contexts in medicine. An important class of such models was treated in \cite{C07, PMHC09}. They deal with the propagation of a {\em cortical spreading depression\/} (CSD) in the human brain. These CSD's are transient depolarizations of the brain that slowly propagate in the cortex of several animal species after a stroke, a head injury, or seizures \cite{somjen}. They also are suspected of being responsible for the aura in \textit{migraines with aura}. CSD's are the subject of intensive research in biology since experiments blocking them during strokes in rodents have produced very promising results \cite{DeKeyser99, Nedergaard95}. These observations however have not been confirmed in humans and the existence of CSD's in the human brain is still a matter of debate \cite{Mayevsky96, Gorgi01, Back00, Strong02}. Since very few experiments and measurements on human brain are available be it for obvious ethical or technical reasons, mathematical models of a CSD is helpful in understanding their existence and conditions for their propagation. In such a problem, the morphology of the brain and thus the geometry of the domain where CSD's propagate, is believed to play an important role. \\

The brain is composed of gray matter where neuron's soma are and of the white matter where only axons are to be found. The rodent brain (on which many of the biological experiments are done) is rather smooth and composed almost entirely of gray matter. On the opposite, the human brain is very tortuous. The gray matter is a thin layer at the periphery of the brain with much thickness variations and convolutions, the rest of the brain being composed of white matter. According to mathematical models of CSDs \cite{Cetal08, somjen, shapiro01, tuckwell80}, the depolarization amplitude follows a reaction-diffusion process of bistable type in the gray matter of the brain while it diffuses and is absorbed in the white matter of the brain. The modeling of CSD hence leads one to the study of equations of the following type:
\begin{equation}
\label{SD}
\frac{\partial u}{\partial t}-\Delta u =f(u) \mathbf{1}_{|y|<L}-\alpha u\mathbf{1}_{|y|\geq L} \qquad t\in \R, \; x=(x_1,y)\in \R^N.
\end{equation} 
Here, $f$ is of bistable type and $|y|=L$ corresponds to the transition from gray matter to white matter. This equation is of type \eqref{main} and we also study it here in sections \ref{section: APbist} and \ref{section: TFbist} where we extend earlier works on the subject. In \cite{C07}, this equation was studied to prove that the thinness of the human gray matter ($L$ small) may prevent the creation or the propagation of CSDs on large distances. It was proved by studying the energy in a traveling referential of the solution of \eqref{SD} with a specific initial condition. The special case of \eqref{SD} for $N=2$ was described more completely in \cite{CJ11}. In \cite{PMHC09}, a numerical study shows that the convolutions of the brain have also a strong influence on the propagation of CSD. In \cite{CG05}, the effect of rapid variations of thickness of the gray matter was studied. \\

Lastly, let us note that the same kind of equation arises in the modeling of tumor cords but with a slightly more complicate KPP non-linearity. We plan to investigate this model in our forthcoming work \cite{avenir_cancer}.\\

As already mentioned, the study of propagation of fronts and spreading properties in heterogeneous media is of intense current interest. For instance, the existence of fronts propagating in non-homogeneous geometries with obstacles has been established in Berestycki, Hamel and Matano \cite{BHM09}. Definitions of generalized waves have been given by Berestycki and Hamel in \cite{BH07} and \cite{BH12} where they are called generalized transition waves. Somewhat different approaches to generalizing the notions of traveling fronts have been proposed by H. Matano \cite{HM12} and W. Shen \cite{Shen}.
The existence of fronts for non-homogeneous equations are established in \cite{NRRZ12} and \cite{Z12}. \\

Let us first introduce some notations before stating the main results.
\begin{notation}
We note $x=(x_1,y) \in \R^N$ where $x_1\in \R$ and $y\in
\R^{N-1}$. Hence $x$ is the space variable in $\R^N$, $x_1$ is its first coordinate and $y$ is the vector of $\R^{N-1}$ composed of all the other coordinates of $x$. 
As usual $B_R=B(0,R)$ denotes a ball of radius $R$ centered at 0, but here it will always mean the ball in $\R^{N-1}$.  
\end{notation}
First we are interested in solutions of 
\begin{equation}
\label{EP}
\begin{cases}
\displaystyle \frac{\partial u}{\partial t}- \Delta u = f(u)-\alpha g(y) u, \quad
x=(x_1,y)\in \R^N, \; t\in \R \\
u\geq 0, \quad u \text{ bounded,}
\end{cases}
\end{equation}
with $\alpha>0$. We will assume that $f:\R \rightarrow \R \text{ is } \CC^1$ and satisfies either one of the following conditions:
$$
f(0)=f(1)=0,\; f>0 \text{ on } (0,1) \text{ and }  f'(0)>0,
$$
or
$$
\begin{array}{c}
 \text{ there exists } \theta \in ]0,1[ \text{ such that } f(0)=f(\theta)=f(1)=0,\\
 f<0 \text{ on } (0,\theta) \text{ and } f>0 \text{ on } (\theta,1) 
\text{ with } f'(0)<0, \; f'(1)<0, \\
\text{and } \int_0^1 f(s)ds>0.
\end{array}
$$
The first case will be referred to as the positive case and the second one will be called bistable case. Furthermore, if $f$ is in the positive case and if 
$$
s\mapsto \frac{f(s)}s \text{ is decreasing on }(0,1]
$$
we will say that $f$ is of Fisher-KPP type.
Since we are only interested in solutions of \eqref{EP} in $[0,1]$, we will further assume that $f(s)\leq 0$ for $s\geq 1$. 
Moreover we assume
\begin{equation}
\label{Hypgpos}
 g:\R^{N-1} \rightarrow \R_+ \text{ is continuous, } g(0)=0, \; g>0 \text{ on }\R^{N-1}\setminus\{0\} 
\end{equation}
(except in section \ref{gvanish} where $g$ can vanish) and
\begin{equation}
\label{Hypginf}
 \lim_{|y|\rightarrow +\infty}g(y)=+\infty. 
\end{equation}
 Taking $g(y)=|y|^2$ and $f(s)=as(1-s)$ yields the particular case of equation \eqref{DP3}.\\

This paper is concerned with the long term behavior of \eqref{EP} and with the existence of curved traveling fronts, i.e. solutions $u(t,x)=U(x_1-ct,y)$ with $c\in \R$ a constant and $U:\R^N \rightarrow \R$ such that the limits $\lim_{s\rightarrow \pm \infty}U(s,.)$ exist uniformly and are not equal. Regarding these fronts, our main results are the following.
\begin{theorem}
\label{existFP}
If $f$ is of Fisher-KPP type, there exists $\alpha_0>0$ such that: 
\begin{itemize}
\item For $\alpha \geq \alpha_0$, there exists no traveling front solution of \eqref{EP},
\item For $\alpha <\alpha_0$ there exists a threshold $c_*>0$ such that there exists a traveling front of speed $c$ of equation \eqref{EP} if and only if $c\geq c_*$. 
\end{itemize}
\end{theorem}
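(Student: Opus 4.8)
The plan is to read everything off the principal eigenvalue $\la$ of the Schr\"odinger operator $-\Lap_y+\alpha g(y)$ on $\R^{N-1}$: since $\alpha g$ is confining by \eqref{Hypginf}, $\la$ is attained at a positive eigenfunction $\pa$ that decays at infinity, and $\alpha\mapsto\la$ is continuous, strictly increasing, with $\la\to 0$ as $\alpha\to 0^+$ and $\la\to+\infty$ as $\alpha\to+\infty$. Thus the threshold $\alpha_0$ of the preceding section is exactly the root of $\lambda_{\alpha_0}=f'(0)$, and for $\alpha<\alpha_0$ one has $f'(0)-\la>0$, so the candidate critical speed is $c_*:=2\sqrt{f'(0)-\la}>0$. \emph{Nonexistence when $\alpha\ge\alpha_0$.} Any front $u(t,x)=U(x_1-ct,y)$ has, by the uniform limits in its definition and interior elliptic estimates, profiles $U(\pm\infty,\cdot)$ that are bounded nonnegative $y$-dependent stationary solutions of \eqref{EP}, and that are distinct; but for $\alpha\ge\alpha_0$ the only such solution is $0$ (this is the nonexistence of the profile $V$ recalled above; equivalently, multiplying $-\Lap_y V+\alpha g V=f(V)\le f'(0)V$ by $\pa$, integrating over $B_R$ and letting $R\to+\infty$, gives $(\la-f'(0))\int V\pa\le 0$, impossible for $V\not\equiv 0$ since $f(s)<f'(0)s$ on $(0,1)$). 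Hence both profiles vanish, a contradiction, and no front exists.

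\emph{Existence of fronts for $c>c_*$ when $\alpha<\alpha_0$.} Fix such a $c$ and let $0<\mu_-<\mu_+$ be the roots of $\mu^2-c\mu+(f'(0)-\la)=0$. Using $f(s)\le f'(0)s$ together with $\Lap_y\pa=(\alpha g-\la)\pa$, one checks that
\[
\overline U(s,y):=\min\!\bigl(V(y),\,e^{-\mu_-s}\pa(y)\bigr)
\]
is a supersolution of the traveling-wave equation $-U_{ss}-\Lap_yU-cU_s=f(U)-\alpha g(y)U$, with $\overline U(-\infty,\cdot)=V$ and $\overline U(+\infty,\cdot)=0$. For a subsolution one takes the classical KPP correction adapted to the waveguide, $\underline U(s,y):=\bigl(\epsilon e^{-\mu_-s}-e^{-\mu's}\bigr)_+\,\pa(y)$ with $\mu'\in(\mu_-,\min(\mu_+,2\mu_-))$ fixed: it is nonnegative and nontrivial (a bump on a half-line $\{s\ge s_1\}$), it is a subsolution because $f(s)\ge f'(0)s-Cs^2$ near $0$, and it lies below $\overline U$ once $\epsilon$ is small (here one uses that $V$ is bounded below by a positive multiple of $\pa$, $V$ being a positive subsolution of $-\Lap_y+\alpha g-f'(0)$, whose ground state is $\pa$). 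One then produces a solution with $\underline U\le U\le\overline U$, either by monotone iteration on $\R^N$ or, following \cite{BN91}, by solving the elliptic problem on truncated cylinders $(-a,a)\times B_R$ between these barriers and letting $a,R\to+\infty$, treating the KPP nonlinearity $f$ directly rather than through a combustion approximation. The sliding method gives monotonicity of $U$ in $s$; hence $U(\pm\infty,\cdot)$ exist and are stationary, the one at $+\infty$ being squeezed to $0$ by $\overline U$, the one at $-\infty$ being a nontrivial bounded positive profile, hence $V$ by uniqueness. This yields a front of each speed $c>c_*$.

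\emph{The critical speed, and nonexistence below it.} For $c=c_*$ one passes to the limit $c_n\downarrow c_*$ in the fronts $U_{c_n}$, translated in $s$ so that $U_{c_n}(0,0)=\tfrac12 V(0)$; interior estimates give a locally convergent subsequence whose limit $U$ solves the $c_*$-equation, is nonincreasing in $s$, and satisfies $U(0,0)=\tfrac12 V(0)\in(0,V(0))$, so it is a genuine front connecting $V$ to $0$. For $c<c_*$ no front exists, by the standard KPP mechanism: the decay of $U$ at $+\infty$ is governed by the characteristic equation $\mu^2-c\mu+(f'(0)-\la)=0$, whose roots are non-real when $c<c_*$, which is incompatible with $U>0$; rigorously this follows from a comparison argument in the whole space between a solution started just below $U$ and expanding subsolutions moving at speeds in $[c_*,\infty)$, hence strictly faster than $c$, forcing such a solution to overtake the profile $U(\cdot-ct,\cdot)$. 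Therefore fronts exist precisely for $c\ge c_*$.

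\emph{Main obstacle.} The crux of the argument is the construction and verification of an admissible nonnegative, nontrivial subsolution lying below $\overline U$ on the unbounded cross-section: the KPP nonlinearity, the confining sink $-\alpha g(y)u$, and the comparison of the decay rates of $\pa$ and of $V$ all have to be reconciled; and the critical speed $c=c_*$ is reached only indirectly, through the limit $c\downarrow c_*$ together with the translation normalisation that prevents the limiting profile from degenerating. A secondary but essential point is to guarantee, in the truncated-domain or iteration scheme, that the limiting profile keeps the correct asymptotic states $V$ and $0$ at $\mp\infty$ and neither collapses nor oscillates; this rests on the $s$-monotonicity from the sliding method and on the classification of bounded positive stationary solutions of \eqref{EP} established in the previous section.
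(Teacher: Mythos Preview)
Your overall strategy is sound and genuinely different from the paper's. The paper works on truncated slabs $(-a,a)\times\R^{N-1}$, picks out $c_a$ by a normalization condition $u_a^{c_a}(0,0)=\theta$, proves $c_a\in(0,2\sqrt{-\la}+\epsi]$ via explicit barrier comparison, and passes to the limit $a\to\infty$ to obtain the minimal-speed front \emph{first}; larger speeds are then handled by a second truncated problem with boundary data borrowed from the $c^*$-front. You instead build explicit global super/sub-solutions directly for each $c>c_*$ and recover $c=c_*$ as a limit. Your route is more self-contained for $c>c_*$ (no slab approximation, no normalisation trick), at the cost of two points below.

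\smallskip
\textbf{A minor regularity issue.} Your subsolution verification invokes $f(s)\ge f'(0)s-Cs^2$, which is stronger than the paper's hypothesis ($f\in\CC^1$ only gives $f(s)=f'(0)s+o(s)$). This is easily repaired: replace the quadratic lower bound by $f(s)\ge(f'(0)-\delta)s$ on $[0,\eta]$ and take $\mu'$ a root of $\mu^2-c\mu+(f'(0)-\delta-\la)=0$; then the computation closes once the subsolution is kept below $\eta$, which follows from smallness of $\epsilon$.

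\smallskip
\textbf{A genuine gap: nonexistence for $c<c_*$.} Your sketch says to compare the hypothetical front $U(\cdot-ct,\cdot)$ with an ``expanding subsolution'' moving at some speed $c'\in(c,c_*]$. The difficulty is the \emph{initial ordering}: your subsolution $\underline U_{c'}$ is supported on a half-line $\{s>s_1\}$ and behaves like $\epsilon\,e^{-\mu_-^{c'}s}\pa(y)$ as $s\to+\infty$, so to place it under $U$ at $t=0$ you would need $U(s,y)\gtrsim e^{-\mu_-^{c'}s}\pa(y)$ for large $s$ --- i.e.\ a lower bound on the decay of $U$, which you have not established (and which is precisely what is in question when $c<c_*$). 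The heuristic ``complex characteristic roots force oscillation'' is not a proof either. The paper closes this cleanly (Proposition~\ref{minc*}) by constructing a \emph{compactly supported} subsolution $w(x_1,y)=\epsi e^{\sigma x_1}\cos(\tfrac{\pi}{2L}x_1)\psi^R(y)$ on $(-L,L)\times B_R$ (possible exactly because $c^2+4(\la^R+\delta)<0$ when $c<c_*$), and then slides the front $u_\tau(x_1,y)=U(x_1+\tau,y)$ against it: since $U(+\infty,\cdot)=0$ and $w$ has compact support, a first touching time $\tau^*$ exists and the strong maximum principle yields a contradiction. I recommend you adopt this compact-subsolution argument for the subcritical range; your comparison idea as written does not close.
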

This existence theorem gives us information on the behavior of the solution of the parabolic problem. In this paper we prove the following theorem:
\begin{theorem}
If $f$ is of Fisher-KPP type, for $u_0\in L^\infty$, there exists a unique solution $u(t,x)$ of
$$
\begin{cases}
\partial_t u -\Delta u =f(u) -\alpha g(y) u \quad
&\text{on } (0,+\infty)\times \R^N,\\
u(0,x)=u_0(x) & \text{on } \R^N.
\end{cases}
$$
\begin{itemize}
\item If $\alpha\geq \alpha_0$, it verifies $u(t,x) \xrightarrow{t\rightarrow
  +\infty} 0$ uniformly with respect to $x\in \R^N$.
\item If $\alpha < \alpha_0$ and $u_0\in \CC^0(\R^N)$ is compactly supported with $u_0< V$ where $V=V(y)$ is the unique positive asymptotic profile (stationary solution), then 
\begin{eqnarray*}
\text{for any }c>c^* \quad \lim_{t\rightarrow +\infty} \sup_{|x_1|\geq ct}
u(t,x)=0, \\
\text{for any } c \text{ with } 0\leq c <c^* \quad \lim_{t\rightarrow +\infty} \sup_{|x_1| < ct} |u(t,x)-V(y)|=0.
\end{eqnarray*}
\end{itemize}
\end{theorem}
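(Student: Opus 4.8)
\emph{Overall strategy.} The plan is to get well-posedness from standard parabolic theory (the zeroth-order term $-\alpha g(y)u$ has the good, dissipative sign), and then to read off the asymptotics from a comparison analysis: a cross-sectional ($x_1$-independent) upper barrier valid in all regimes, the traveling fronts of Theorem~\ref{existFP} for the ``no invasion'' region $\{|x_1|\ge ct\}$, and compactly supported sub-solutions built from eigenfunctions for the ``invasion'' region $\{|x_1|<ct\}$. Write $(\la,\pa)$ for the principal eigenpair of $-\Delta_y+\alpha g$ on $\R^{N-1}$ (a Schr\"odinger operator with confining potential, hence compact resolvent), so that $\alpha<\alpha_0$ means $\mu_0:=f'(0)-\la>0$ and $\cu=2\sqrt{\mu_0}$ is the minimal speed produced in the proof of Theorem~\ref{existFP}. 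For well-posedness: $0\le u_0\le M:=\|u_0\|_\infty$ and, since $f(s)\le 0$ for $s\ge1$, any constant $M\ge1$ is a stationary supersolution while $0$ is a subsolution, so a global bounded classical solution is obtained by solving the truncated problems ($g$ replaced by $g_n=\min(g,n)$) and passing to the monotone limit; the sign of $-\alpha g(y)u$ makes the comparison principle available on $\R^N$ for bounded sub/supersolutions, whence uniqueness. The \emph{master upper bound}: $M$ is also a supersolution of the cross-sectional equation $\partial_t\overline u-\Delta_y\overline u=f(\overline u)-\alpha g(y)\overline u$, so its solution $\overline u=\overline u(t,y)$ with datum $M$ is $x_1$-independent, nonincreasing in $t$, and $u(t,x)\le\overline u(t,y)$; by parabolic estimates $\overline u(t,\cdot)$ decreases to a bounded nonnegative steady state of that equation, and the convergence is uniform on $\R^{N-1}$ (Dini on compact $y$-sets; on $\{|y|\ge R_0\}$, where $\alpha g\ge 2f'(0)$, $\overline u$ is a subsolution of $\partial_t w-\Delta_y w+f'(0)w=0$ controlled by $z(t,y)=\sup_{|y|=R_0}\overline u(\tau,\cdot)+Me^{-f'(0)(t-\tau)}$ for $t\ge\tau$). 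If $\alpha\ge\alpha_0$ there is no nonzero such steady state (preceding sections), so the limit is $0$ and $u(t,x)\le\overline u(t,y)\to0$ uniformly.

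\emph{The case $\alpha<\alpha_0$: upper bound and the ``no invasion'' region.} Now $\la<f'(0)$, so $\varepsilon\pa$ is a subsolution of the cross-sectional equation for $\varepsilon$ small (because $f(s)/s\ge\la$ near $0$); this forces the decreasing limit of $\overline u$ to be $\ge\varepsilon\pa>0$, hence to equal $V$ by uniqueness of the positive profile, and thus $u(t,x)\le\overline u(t,y)\to V(y)$ uniformly. Moreover $V(y)$ is a steady state of \eqref{EP} and $u_0<V$, so by the strong maximum principle $u(t,x)<V(y)$ for $t>0$; consequently the assertion for $c>\cu$ reduces to showing $u\to0$ on $\{|x_1|\ge ct\}$, and the one for $0\le c<\cu$ reduces to the lower bound $u(t,x)\ge V(y)-\varepsilon$ on $\{|x_1|<ct\}$ for $t$ large. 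For $\{|x_1|\ge ct\}$, fix $\cu<c'<c$ and let $U_{c'}$ be a front of speed $c'$ from Theorem~\ref{existFP}, with $U_{c'}(s,\cdot)\to V$ as $s\to-\infty$ and $\to 0$ as $s\to+\infty$ uniformly in $y$; then $U_{c'}(x_1-c't-A,y)$ and $U_{c'}(-x_1-c't-A,y)$ are exact solutions, and since $u_0$ is compactly supported with $u_0<V$ there, for $A$ large both dominate $u_0$ (using the uniform limit at $-\infty$), hence dominate $u$ for all $t$; for $|x_1|\ge ct$ the smaller of the two is at most $\sup_y U_{c'}((c-c')t-A,y)\to0$.

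\emph{The ``invasion'' region, steps (i)--(ii).} This is the core; it uses compactly supported sub-solutions built from $\pa$, from the Dirichlet eigenfunctions $\psi_R$ of $-\Delta_y+\alpha g$ on $B_R$ (eigenvalue $\lambda^R\downarrow\la$), and from $\varphi_\rho(x_1)=\cos(\pi x_1/2\rho)$ on $(-\rho,\rho)$, all extended by $0$. (i) \emph{Local convergence.} $u(1,\cdot)>0$ everywhere (as $u_0\not\equiv0$), so it dominates $\varepsilon_0\varphi_\rho(x_1)\psi_R(y)$ on $(-\rho,\rho)\times B_R$; for $\rho,R$ large this product is a subsolution of \eqref{EP} (as $(\pi/2\rho)^2+\lambda^R<f'(0)$) and $\varepsilon_0$ small, and its solution increases to a positive steady state, i.e.\ to $V$, locally uniformly; with the upper bound, $u\to V$ uniformly on compact $x$-sets (this also settles $c=0$). (ii) \emph{Reaching level $\varepsilon\pa$ on a moving cylinder.} Fix $c<c'<\cu$. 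Since $c'<2\sqrt{\mu_0^R}$ with $\mu_0^R:=f'(0)-\lambda^R$ for $R$ large, separating variables in the frame $\xi=x_1-c't$, namely $\delta e^{-c'\xi/2}\chi(\xi)\psi_R(y)$, reduces the sub-solution inequality to $-\chi''\le(\mu_0^R-\eta-c'^2/4)\chi$, solved by a truncated cosine; this produces compactly supported traveling sub-solutions of speed $c'$ and their leftward mirror images. For $\delta$ small they sit under $u(1,\cdot)$, hence under $u(t+1,\cdot)$ for all $t\ge0$; launching at each time $\tau\in[1,t]$ a leftward bump from the current position of the rightward bump — legitimate because $u(\tau,\cdot)$ dominates the rightward bump on a whole $x_1$-window and the launched bump has uniform $y$-profile and amplitude — one covers $\{|x_1|\le c't\}$ and gets $u(t,x)\ge\varepsilon_2\psi_R(y)$ there, for $t$ large and a fixed $\varepsilon_2>0$.

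\emph{Step (iii) and conclusion.} Fix $\rho_*$ large enough that $\varepsilon_2\varphi_{\rho_*}(x_1-a)\psi_R(y)$ is a subsolution of \eqref{EP}; for $|a|<c'\sigma_0-\rho_*$ it lies below $u(\sigma_0,\cdot)$ by step (ii), and since \eqref{EP} is invariant under $x_1$-translations, its solution is a pure $x_1$-translate of a single fixed one, so it increases to $V$ in a time $S$ \emph{independent of $a$}. Hence $u(\sigma_0+S,x)\ge V(y)-\varepsilon$ for $|x_1|<c'\sigma_0-\rho_*$ and $|y|\le R'$; choosing $\sigma_0=t-S$ and $R'$ so large that $\sup_{|y|>R'}V<\varepsilon$, and using $0\le u<V$, gives $\sup_{|x_1|<ct}|u(t,x)-V(y)|<\varepsilon$ for $t$ large (since $c'>c$). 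Together with the upper bound $u\le\overline u\to V$ this proves the second assertion. The main obstacle, and the only genuinely new ingredient, is step (ii): constructing traveling sub-solutions at every subcritical speed and tiling the moving cylinder with them, which is precisely where the identity $\cu=2\sqrt{f'(0)-\la}$ — hence its agreement with the front-existence threshold of Theorem~\ref{existFP} — is essential; the rest combines the cross-sectional barrier, the fronts, and routine parabolic comparison.
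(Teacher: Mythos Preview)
Your strategy for the extinction case and for the outer region $\{|x_1|\ge ct\}$ matches the paper's. For the inner region $\{|x_1|<ct\}$ you take a genuinely different route: the paper first proves a Liouville result (its Theorem~\ref{existz}: any positive stationary solution $z\le V$ of $-\Delta z-c\partial_1 z+\alpha g z=f(z)$ with $c<c^*$ equals $V$), uses it to get pointwise convergence in each moving frame, and then upgrades to uniform convergence by constructing a monotone solution $v_c$ on the half-cylinder $\R^-\times B_R$ with Dirichlet boundary (Proposition~\ref{existv}) and comparing with $u$ on $(0,\infty)\times B_R$. Your steps (ii)--(iii) replace this by a tiling argument with traveling bumps of subcritical speed and then by the observation that the evolution from a translated bump is a pure $x_1$-translate, hence converges at a rate independent of the center $a$. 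This avoids the half-cylinder construction entirely and is arguably more elementary; the paper's approach, on the other hand, packages the lower bound into a single comparison with $v_{\tilde c}(x_1-\tilde c(t-t_0),y)$ and does not require launching bumps at every intermediate time.

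There is, however, a genuine gap in your steps (i) and (iii). You assert that the evolution from the compactly supported subsolution $\varepsilon_0\varphi_\rho(x_1)\psi_R(y)$ ``increases to a positive steady state, i.e.\ to $V$''. The limit $z(x_1,y)$ is indeed a positive bounded solution of $-\Delta z+\alpha g(y)z=f(z)$ on $\R^N$, but it depends \emph{a priori} on $x_1$, so the uniqueness of the cross-sectional profile on $\R^{N-1}$ does not apply directly. What is needed is precisely the Liouville statement that any such $z$ with $0<z\le V$ coincides with $V$; the paper isolates this as Theorem~\ref{existz} and proves it by sliding a compactly supported subsolution under $z$. You have not proved this, and without it step (iii) does not conclude. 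The fix is short and uses your own tools: since $z>0$, your traveling bump of speed $c'\in(0,c^*)$ fits under $z$ at time $0$; comparison then gives $z(x_1,y)\ge \delta'\psi_R(y)$ for all $x_1$ (use both the rightward and leftward bumps), so $\underline z(y):=\inf_{x_1}z(x_1,y)$ is a positive supersolution of the profile equation dominating the subsolution $\delta'\psi_R$, whence $V\le\underline z\le z\le V$. Once you insert this argument, your proof is complete.
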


This means that there is a threshold value $\alpha_0$ such that for $\alpha \geq \alpha_0$, there is {\em extinction}. On the contrary, when $\alpha \leq \alpha_0$, there is spreading and the state $V(y)$ invades the whole space. The asymptotic speed of spreading is then $c^*$.
The property of asymptotic spreading is in the same spirit of the theorem of asymptotic speed of spreading in cylinders established by Mallordy and Roquejoffre in \cite{MR95}.

Theorem 1.2 has interesting consequences for the dynamics of the phenotypic diversity in a population. Several studies have tried to understand population migrations through phenotypic diversity \cite{Excoffier09, Hallatschek07, Hallatschek08, Hallatschek10, Roques12, Vlad04}. Our invasion result states that for large times, one expect to see the state $V(y)$ at any location (and not the migration process) and it holds whatever the initial distribution of the population is. Note furthermore that the profile $V(y)$ is unique. Hence whatever the initial structure of the population is, the phenotypic diversity at large times is completely determined by the profile of the function $g$. \\

In the slightly more general case of a positive non-linearity, we will prove the following existence theorem.
\begin{theorem}
\label{existpos}
If $f$ is of positive type, there exists $\alpha_0>0$ such that for $\alpha <\alpha_0$ there exists a traveling front of equation \eqref{EP}. 
\end{theorem}

Regarding the case of bistable $f$ we have the following result:
\begin{theorem}
\label{existFPb}
If $f$ is of bistable type, there exist $\alpha^* \geq \alpha_*>0$ such that 
\begin{itemize}
\item For $\alpha\geq \alpha^*$, there exists no traveling front solution of \eqref{EP},
\item  For $\alpha <\alpha_*$, under condition~\ref{uniqsPA} of Section~\ref{section: APbist}, there exists a traveling front $u$ of speed $c>0$ solution of \eqref{EP}. 
\end{itemize}
\end{theorem}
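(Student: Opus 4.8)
\textbf{Proof proposal for Theorem~\ref{existFPb}.}
I would treat the non-existence part first, which is soft, and then the existence part, which follows the Berestycki--Nirenberg scheme of \cite{BN91} adapted to the unbounded cross-section. For the non-existence, observe that if $U(x_1-ct,y)$ is a traveling front, then the limits $W^{\pm}(y):=\lim_{s\to\pm\infty}U(s,y)$ are bounded nonnegative solutions on $\R^{N-1}$ of the transverse stationary problem $-\Delta_y W=f(W)-\alpha g(y)W$, and by the very definition of a front at least one of them is not identically zero; hence it suffices to show that for $\alpha$ large this problem has only the solution $W\equiv 0$, and one then takes $\alpha^{*}$ to be (at most) the threshold, studied in Section~\ref{section: APbist}, above which no nonzero transverse profile exists. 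Concretely, let $\lambda_1(\alpha)$ be the principal eigenvalue of $-\Delta_y+\alpha g(y)$ on $\R^{N-1}$, with positive eigenfunction $\varphi_1$; since $g$ is confining, $\lambda_1(\alpha)\to+\infty$ as $\alpha\to+\infty$. Any such $W$ satisfies $0\le W\le 1$ by the maximum principle (using $f\le0$ on $[1,\infty)$), hence $f(W)\le\kappa W$ with $\kappa:=\sup_{(0,1)}f'>0$; multiplying the equation by $\varphi_1$ and integrating (the $\alpha g$ contributions cancel) gives $\lambda_1(\alpha)\int W\varphi_1=\int f(W)\varphi_1\le\kappa\int W\varphi_1$, which forces $W\equiv 0$ as soon as $\lambda_1(\alpha)>\kappa$. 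Finally $\alpha^{*}\ge\alpha_{*}$ is automatic, for if $\alpha_{*}>\alpha^{*}$ then both a front and its absence would hold on $(\alpha^{*},\alpha_{*})$.

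For the existence part, in the moving frame a front of speed $c$ is a solution $0\le U\le 1$ of
\[
-\Delta U-c\,\partial_{x_1}U=f(U)-\alpha g(y)U \qquad\text{on }\R^N,
\]
with $U(-\infty,\cdot)=V$ and $U(+\infty,\cdot)=0$, where $V$ is \emph{the} positive transverse profile granted by Condition~\ref{uniqsPA}. I would obtain $U$ as a limit of solutions $U_{a,R}$ of this equation on the truncated cylinder $(-a,a)\times B_R$, with data $U=\chi_R V$ on $\{-a\}\times B_R$ (a cutoff of $V$ vanishing on $\partial B_R$), $U=0$ on $\{a\}\times B_R$ and on $(-a,a)\times\partial B_R$, together with a normalization fixing the position of the interface, say $U(0,0)=\tfrac12 V(0)$, the speed $c=c_{a,R}$ being an unknown tuned so that this normalization holds. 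For fixed $c$ the truncated boundary value problem is solvable by monotone iteration between the subsolution $0$ and a supersolution built from $\chi_R V$; the sliding method then gives $U_{a,R}$ nonincreasing in $x_1$, and a continuity/degree argument in $c$ produces $c_{a,R}$ realizing the normalization.

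The delicate point---and the one I expect to be the real obstacle---is the a priori two-sided bound $0<\underline c\le c_{a,R}\le\overline c$ independent of $a,R$. Multiplying the equation by $\partial_{x_1}U_{a,R}$ and integrating over the cylinder (the lateral boundary contributes nothing since $U=0$ there) yields an energy identity of the form $c_{a,R}\int(\partial_{x_1}U_{a,R})^2=\mathcal{E}_{-a}-\mathcal{E}_{a}$, where $\mathcal{E}_{\pm a}$ is a weighted transverse energy of the slice $\{x_1=\pm a\}$; controlling the boundary-derivative corrections (which vanish as $a\to\infty$ since $\chi_R V$ is close to a transverse solution) gives, in the limit, $c\int_{\R^N}(\partial_{x_1}U)^2=-\mathcal{E}[V]$ with $\mathcal{E}[W]:=\int_{\R^{N-1}}\big(\tfrac12|\nabla W|^2-F(W)+\tfrac\alpha2 g W^2\big)$ and $F(s):=\int_0^s f$. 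One shows $\mathcal{E}[V]<0$ using $\int_0^1 f>0$ together with the smallness of $\alpha<\alpha_{*}$: a plateau test function $\psi\approx1$ on a large ball has $\mathcal{E}[\psi]<0$, whence $\mathcal{E}[V]\le\mathcal{E}[\psi]<0$ (the energy of the profile lies below that of the state $0$). This pins the sign $c>0$; combined with the interior Lipschitz bound on $U_{a,R}$, which bounds $\int(\partial_{x_1}U_{a,R})^2$ from above, and with a lower bound for this same quantity furnished by the normalization (which prevents $U_{a,R}(\cdot,0)$ from being constant between its two end values), it delivers the uniform estimate $0<\underline c\le c_{a,R}\le\overline c$. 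This energy/stability input is exactly what replaces H.~Matano's theorem \cite{HM79} and the fine exponential asymptotics of \cite{BN91}.

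Finally, with $c_{a,R}$ bounded, interior elliptic (Schauder) estimates give $C^{2,\beta}_{loc}$ bounds on $U_{a,R}$, so along a subsequence $U_{a,R}\to U$ in $C^2_{loc}(\R^N)$ and $c_{a,R}\to c\in[\underline c,\overline c]$, with $U$ a bounded solution on $\R^N$, still nonincreasing in $x_1$ and still satisfying the normalization, so $U\not\equiv0$ and $U\not\equiv V$. Monotonicity makes $U(\pm\infty,y)$ exist and be bounded nonnegative transverse profiles; the normalization and $U\not\equiv V$ exclude the degenerate cases, and Condition~\ref{uniqsPA} forces $U(-\infty,\cdot)=V$ and $U(+\infty,\cdot)=0$. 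The uniformity in $y$ of these limits, and the decay $U(x_1,y)\to0$ as $|y|\to+\infty$ uniformly in $x_1$ required by the definition of a front, follow from a barrier argument: for $|y|\ge L$ large one has $f(U)-\alpha g(y)U\le-\beta U$, so $U$ is dominated there by an explicit supersolution of $-\Delta W-c\,\partial_{x_1}W+\beta W=0$ that decays in $|y|$. Positivity $c>0$ is precisely the lower bound read off the energy identity, and this completes the construction; as indicated, the crux throughout is the uniform control of the speed in the finite-domain approximation.
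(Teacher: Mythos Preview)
Your overall architecture is right, and the non-existence half is fine, but the existence argument has two genuine gaps at precisely the places you flag as ``delicate''.

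\textbf{The uniform upper bound on $c_{a,R}$ does not follow from your energy identity.} You want a lower bound on $\int(\partial_{x_1}U_{a,R})^2$ independent of $a$, and you appeal to the normalization. But along the line $y=0$, Cauchy--Schwarz only gives
\[
\int_{-a}^{0}(\partial_{x_1}U_{a,R}(x_1,0))^2\,dx_1 \;\ge\; \frac{1}{a}\Bigl(U_{a,R}(-a,0)-U_{a,R}(0,0)\Bigr)^2 \;\sim\; \frac{C}{a},
\]
which tends to $0$ as $a\to\infty$; integrating in $y$ does not help since the transverse mass is bounded. So the energy identity yields no uniform upper bound on $c$. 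The paper obtains this bound by a completely different device: it compares $u_a$ with the solution $z_a$ of the one-dimensional bistable problem $-z''-\gamma z'=f(z)$ on $(-a,a)$ with $z(-a)=1$, $z(a)=0$, normalized by $z(0)=\theta$. Since $\alpha g\ge 0$, $z_a$ is a super-solution of the full problem, hence $u_a\le z_a$, and the normalization $\max_y u_a(0,y)=\theta$ forces $c_a\le \gamma_a\to\gamma^*$ (the 1D bistable speed). Note also that the paper's normalization is $\max_y u_a(0,y)=\theta$, not a pointwise value; this choice is what later makes the identification $u_+\equiv 0$ immediate, since any nonzero profile has maximum $>\theta$.

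\textbf{The identification $u_-\equiv V$ is not a consequence of Condition~\ref{uniqsPA} alone.} That condition asserts uniqueness of the \emph{stable} positive profile; the paper explicitly shows (Theorem~\ref{thmPAb}) that for $\alpha<\alpha^*$ there is at least one further, unstable, positive profile $W$ with $0<W<V$. Nothing in your limit procedure prevents $u_-$ from being such a $W$. The paper spends real effort here: assuming $u_-\ne V$ (hence $\lambda_1[u_-]<0$), it builds a compactly supported perturbation $w=u_- -\varepsilon\,\cos(\omega x_1)\psi_R(y)$ which is a super-solution of the full $N$-dimensional equation, and then derives a contradiction either via the parabolic flow (if $c>0$) or by sliding $u$ against $w$ (if $c=0$). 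This argument, incidentally, is also what ultimately yields $c>0$ strictly; your energy identity $c\int(\partial_{x_1}U)^2=-\mathcal E[V]$ is only valid once you already know $u_-\equiv V$ and $u_+\equiv 0$, so it cannot be used to get there. The paper's lower bound $\liminf c_a\ge 0$ is likewise obtained by a contradiction argument exploiting that $V$ is the global \emph{minimizer} of $J_\alpha$, not merely that $J_\alpha(V)<0$.
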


Lastly, the model for CSD's leads one to equations of the type
\begin{equation}
\label{Egenintro}
\partial_t u -\Delta u=h(y,u) \quad x=(x_1,y)\in \R^N.
\end{equation}
where $h(y,u)$ verifies
\begin{eqnarray*}
&& h(y,u)=f(u) \text{ for } |y|\leq L_1 \\
&& h(y,u)\leq -mu \text{ for } |y|\geq L_2\\
&& h(y,u)+mu\xrightarrow{|y|\rightarrow +\infty} 0 \quad \text{uniformly for } u\in \R^+
\end{eqnarray*}
where $0<L_1\leq L_2 <\infty$ and $m>0$ are given parameters and $f$ is of bistable form. \\

In this paper we prove the following Theorem.
\begin{theorem}
\label{thmSDTF}
There exist critical radii $0<L_*\leq L^*<\infty$ with the following properties:
\begin{itemize}
\item For $L_2<L_*$, there is no traveling front solution of \eqref{Egenintro}.
\item For $L_1>L^*$ (independently of $L_2$), assuming that there is a unique stable asymptotic profile of \eqref{APSD}, there exists a traveling front of speed $c>0$ solution of \eqref{Egenintro}.
\end{itemize}
\end{theorem}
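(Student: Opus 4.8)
\bigskip
\noindent\textbf{Proof proposal.}
The plan is to reduce both statements to a study of the cross-sectional stationary problem~\eqref{APSD} and then to run the same machinery already developed for the bistable equation~\eqref{EP} in Theorem~\ref{existFPb}: a truncated-domain approximation, the sliding method, and energy/stability estimates in the moving frame in the spirit of~\cite{C07}. Two objects organize the argument. First, the bounded nonnegative solutions of~\eqref{APSD}, which are forced to be the limits $U(\pm\infty,\cdot)$ of any traveling front $u=U(x_1-ct,y)$ of~\eqref{Egenintro}. Second, the generalized principal eigenvalue $\lambda_1=\lambda_1(L_1,L_2)$ of $-\Lap_y-h_u(y,0)$ on $\R^{N-1}$: since $h_u(y,0)=f'(0)>0$ on $B_{L_1}$ while $h_u(y,0)\le -m<0$ on $\R^{N-1}\setminus B_{L_2}$, one gets $\lambda_1>0$ once $L_2$ is small (compare with the first Dirichlet eigenvalue of $B_{L_2}$, which grows like $L_2^{-2}$ while $h_u(y,0)$ stays bounded above), and $\lambda_1<0$ once $L_1$ is large.

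\emph{Non-existence for $L_2<L_*$.} Take $L_*$ to be a threshold below which $\lambda_1>0$. If a bounded nonnegative traveling front existed, parabolic estimates and the relative compactness of the orbit in $x_1$ would force $V^\pm(y):=\lim_{x_1\to\pm\infty}U(x_1,y)$ to be bounded nonnegative solutions of~\eqref{APSD}; but $\lambda_1>0$, together with the linear bound $h(y,s)\le Ms\,\mathbf 1_{|y|\le L_2}-ms\,\mathbf 1_{|y|>L_2}$ for $s\in[0,1]$ (with $M=\sup_{s\in(0,1]}f(s)/s$, and bounding the bounded part of $h$ on the annulus $L_1<|y|<L_2$ crudely by $Ms$), forces $V^+\equiv V^-\equiv 0$ by a maximum-principle/sweeping argument. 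This contradicts the requirement that the two limits of a front differ. Alternatively, one argues as in~\cite{C07}: multiplying the front equation by a principal eigenfunction of $-\Lap_y-h_u(y,0)$ on $B_R$, integrating over $(-T,T)\times B_R$ and letting $R,T\to\infty$ yields $U\equiv 0$.

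\emph{Existence for $L_1>L^*$.} \emph{Step 1 (asymptotic profile).} When $L_1$ is large there is a nontrivial stable solution $V$ of~\eqref{APSD}: a large constant $K$ with $h(y,K)\le 0$ is a supersolution (using $h(y,1)\le f(1)=0$ on $B_{L_1}$ and $h(y,K)\le -mK<0$ outside $B_{L_2}$), while a subsolution is obtained by rescaling the first Dirichlet eigenfunction of $B_{L_1}$, which — because $\int_0^1 f>0$ and $B_{L_1}$ is large — can be taken above $\theta$ on most of $B_{L_1}$ and equal to $0$ on $\partial B_{L_1}$; monotone iteration between them yields $V$, exponentially small as $|y|\to\infty$ by the absorption, and a minimal/maximal-solution comparison yields its stability. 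Define $L^*$ as the infimum of the $L_1$ for which this works; by hypothesis $V$ is the unique stable profile. \emph{Step 2 (truncation).} On $\Sigma_{a,R}=(-a,a)\times B_R$ solve $-U_{x_1x_1}-cU_{x_1}-\Lap_y U=h(y,U)$ with $U=0$ on $(-a,a)\times\partial B_R$, $U(-a,\cdot)=V_R$ (the corresponding Dirichlet profile on $B_R$), $U(a,\cdot)=0$, and the normalization $\max_{y\in B_R}U(0,y)=\theta$ that pins the front and selects $c=c_{a,R}$; existence follows from a Leray--Schauder degree/continuation argument as in~\cite{BN91}. \emph{Step 3 (monotonicity and speed bounds).} The sliding method gives $U_{x_1}<0$ and uniqueness of $(U,c_{a,R})$; the energy of $U$ in the moving frame, together with the linear stability of the two end states — $0$, encoded by $\lambda_1<0$, and $V_R$, by a spectral gap uniform in $R$ — bounds $0<\underline c\le c_{a,R}\le\overline c$ independently of $(a,R)$. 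This is precisely where the energy-minimization argument replaces the use of Matano's theorem and of precise exponential decay in~\cite{BN91}. \emph{Step 4 (passage to the limit).} Elliptic estimates and a diagonal extraction send $a,R\to\infty$; the normalization prevents the limit from being constant, and monotonicity together with Step~1 and the uniqueness hypothesis identify $U(-\infty,\cdot)=V$ and $U(+\infty,\cdot)=0$. Finally $c>0$ follows from the energy identity
\[
c\int_{\R^N}U_{x_1}^2\,dx=-\mathcal E[V],\qquad \mathcal E[v]:=\int_{\R^{N-1}}\Bigl(\tfrac12|\nabla_y v|^2-\int_0^{v(y)}h(y,\sigma)\,d\sigma\Bigr)dy,
\]
whose right-hand side is positive once $L_1$ is large, since then $V\approx 1$ on the large ball $B_{L_1}$ and $\int_0^1 f>0$, making $\mathcal E[V]$ negative (of order $-|B_{L_1}|$).

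\emph{Main obstacle.} The crux is Step~3: the uniform speed bounds $0<\underline c\le c_{a,R}\le\overline c$ and, in tandem, the prevention of degeneration of the limiting profile to a constant (the front ``escaping'' to $x_1=\pm\infty$). Both are controlled through the energy functional in the moving frame combined with the linear stability of the two end states, with the stability at $0$ encoded by $\lambda_1<0$ and that at $V$ by a spectral gap that must be made uniform in the $y$-truncation $R$. Establishing this uniform-in-$R$ quantitative stability is the technical heart of the argument, and it is exactly here that the present method departs from, and streamlines, the exponential-decay analysis of~\cite{BN91}.
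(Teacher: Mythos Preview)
Your overall architecture (profile analysis, truncated slab, sliding, limit) matches the paper, but two points are genuinely off and a third differs substantively from the paper's route.

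\textbf{The linearization at $0$ is the wrong object.} For a bistable $f$ one has $f'(0)<0$ (the paper's statement $f'(0)>0$ is a typo; it is incompatible with $f<0$ on $(0,\theta)$). Hence $0$ is \emph{always} linearly stable: the principal eigenvalue of $-\Delta_y-h_u(y,0)$ is positive for every $L_1,L_2$. So your primary non-existence mechanism (``take $L_*$ so that $\lambda_1>0$'') does not single out any threshold, and later writing ``stability of $0$, encoded by $\lambda_1<0$'' is simply a sign error. Your fallback via the majorant $h(y,s)\le Ms\,\mathbf 1_{|y|\le L_2}-ms\,\mathbf 1_{|y|>L_2}$ with $M=\sup_{s}f(s)/s$ does work, but that is an eigenvalue of a \emph{different} operator. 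The paper's argument is more direct and avoids eigenvalues altogether: multiply \eqref{APSD} by $V$, integrate, and use $\int_{B_{L_2}}V^2\le\eta(L_2)\|V\|_{H^1}^2$ with $\eta(L_2)\to0$ to force $V\equiv0$ when $L_2$ is small; no profile means no front.

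\textbf{No truncation in $y$ is needed, and the speed bound is not a spectral-gap argument.} The paper works on $(-a,a)\times\R^{N-1}$, using the absorption $h\le-mu$ for $|y|\ge L_2$ to obtain compactness (exponential decay in $y$, Proposition~\ref{comparison}-type comparison). This removes entirely what you call the ``main obstacle'' --- a uniform-in-$R$ spectral gap at $V_R$ --- because there is no $R$. The lower speed bound $\liminf_{a\to\infty}c_a\ge0$ is obtained by contradiction: if $c_a<0$ along a subsequence, the $c=0$ solution $v^a$ has $\max_y v^a(0,y)\le\theta$; after recentering and passing to the limit one gets a monotone stationary $w$ on a half-line with $w(-d,\cdot)=V$ and $w(+\infty,\cdot)=\psi$, and the identity
\[
\tfrac12\int_{\{x_1=-d\}}(\partial_1 w)^2 + J(\psi) - J(V)=0
\]
forces $J(\psi)\le J(V)$, contradicting that $V$ is the \emph{unique} energy minimizer (Theorem~\ref{thmSD}(iii) plus the uniqueness hypothesis). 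So the engine is the global minimization property $J(V)=\min J<0$, not a linearized spectral gap at $V$.

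\textbf{What is right in your proposal.} The upper bound on $c_a$ by comparison with the $1$D bistable front for $\tilde f=\max\{f,-mu\}$ is exactly what the paper does. Your a posteriori identity $c\int U_{x_1}^2=-J(V)$ is correct and gives $c>0$ cleanly once the front with the right limits is in hand; the paper instead identifies $u_-\equiv V$ by showing that any non-maximal profile is linearly unstable and building a compactly supported super-solution to derive a contradiction (two cases, $c>0$ and $c=0$).
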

The assumption on the uniqueness of the asymptotic profile is proved to be true for the case $N-1=2$, $L_1=L_2$ and $h(y,s)=-ms$ for $|y|\geq L_2$. This is done in \cite{CJ11} by phase plane method. For want of a uniqueness result for the profile equation in more general cases,  

 This theorem completes the study in \cite{C07} on the existence of CSD in the human brain. Indeed in \cite{C07} the transition from gray to white matter was instantaneous when biologically there is a smooth transition from
 gray to white matter. This Theorem confirms the intuition that CSD's can be found in part of the human brain where the gray matter is sufficiently thick but they can not propagate over large distances due to a thin gray matter in many parts of the human brain.\\
 
The paper is organized as follows. In section \ref{section: prelim} we state some preliminary results that will be used in the sequel. Section \ref{section: asympt} is dedicated to the study of the existence and uniqueness of non-zero asymptotic profiles for a traveling front solution of \eqref{EP}. In section \ref{section: convergence} we study the large time behavior. There we prove extinction if $\alpha \geq \alpha_0$ and convergence towards the front of minimal speed if $\alpha <\alpha_0$. Section \ref{section: positive} extends existence of traveling front results to the case of a positive non-linearity. Then, section \ref{section: APbist} is devoted to the study of the asymptotic profiles in the bistable case and section \ref{section: TFbist} to the existence of traveling front for $\alpha<\alpha_*$ in the bistable case. Lastly, in section~\ref{section: CSD} we describe the precise problem arising in the modeling of CSD's and state our main result in this framework.


\section{Preliminary results}
\label{section: prelim}

In our proofs, we will need several times the exponential decay of the asymptotic profile which can be easily proved from the following theorem established in \cite{BR08}. 
\begin{theorem}
\label{BR}
Let $v\in H^2_{\text{loc}}(\R^N)$ be a positive function. Assume that
there exists $\gamma>0$ and $C>0$ such that 
$$
\forall x\in \R^N, \quad v(x)\leq Ce^{\sqrt{\gamma}|x|} \text{ and }
\liminf_{|x|\rightarrow \infty} \frac{\Delta v(x)}{v(x)}>\gamma.
$$
Then, $\displaystyle \lim_{|x|\rightarrow
  \infty}v(x)e^{\sqrt{\gamma}|x|}=0$.
\end{theorem}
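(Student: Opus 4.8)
The plan is to recognize the hypothesis as a subsolution property for a Helmholtz-type operator and then to squeeze $v$ between a decaying and a growing exponential barrier by a Phragm\'en--Lindel\"of argument. Write $\mu=\sqrt\gamma$. Since $\liminf_{|x|\to\infty}\Lap v/v>\gamma$, I first fix $\gamma'$ with $\gamma<\gamma'<\liminf_{|x|\to\infty}\Lap v/v$ and then $R_0>0$ such that
\[
\Lap v(x)\ge \gamma'\,v(x)\qquad\text{for }|x|\ge R_0,
\]
using $v>0$. Equivalently, with $L:=-\Lap+\gamma'$, the function $v$ is a subsolution $Lv\le 0$ in the exterior domain $\Omega=\{|x|>R_0\}$. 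The operator $L$ has a positive zeroth-order coefficient $\gamma'>0$, so it obeys the weak maximum principle on bounded subdomains: if $Lw\le0$ weakly and $w\le0$ on the boundary, testing the inequality against $w^+\in H^1_0$ gives $\int|\nabla w^+|^2+\gamma'\int(w^+)^2\le0$, whence $w\le0$. This is the only form of the maximum principle I will need, and it accommodates the mere $H^2_{\mathrm{loc}}$ regularity of $v$.

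Next I construct two radial barriers for $L$. Using $\Lap e^{a|x|}=(a^2+(N-1)a/|x|)\,e^{a|x|}$, the decaying function $\psi(x)=e^{-\sqrt{\gamma'}|x|}$ satisfies $\Lap\psi/\psi=\gamma'-(N-1)\sqrt{\gamma'}/|x|\le\gamma'$, so $L\psi\ge0$: it is a supersolution on $\{|x|>0\}$. Choosing any $\beta$ with $\mu<\beta<\sqrt{\gamma'}$ (possible since $\gamma<\gamma'$), the growing function $\Phi(x)=e^{\beta|x|}$ satisfies $\Lap\Phi/\Phi=\beta^2+(N-1)\beta/|x|\to\beta^2<\gamma'$, so $L\Phi\ge0$ for $|x|\ge R_1$ with $R_1$ large. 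Crucially, because $\beta>\mu$, the a priori bound $v(x)\le Ce^{\mu|x|}$ makes $\Phi$ dominate $v$ at infinity: $v(x)/\Phi(x)\le Ce^{(\mu-\beta)|x|}\to0$.

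I then set $R_2=\max(R_0,R_1)$ and $\Omega'=\{|x|>R_2\}$. Since the growth hypothesis makes $v$ bounded on the sphere $\{|x|=R_2\}$, I pick $M>0$ with $v\le M\psi$ there. For each $\epsilon>0$ put $w_\epsilon=v-M\psi-\epsilon\Phi$ on $\Omega'$. Then $Lw_\epsilon=Lv-M\,L\psi-\epsilon\,L\Phi\le0$, so $w_\epsilon$ is an $L$-subsolution; on $\{|x|=R_2\}$ we have $w_\epsilon\le-\epsilon\Phi<0$; and since $v-\epsilon\Phi\le Ce^{\mu|x|}-\epsilon e^{\beta|x|}\to-\infty$, we have $w_\epsilon\to-\infty$ as $|x|\to\infty$. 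Applying the weak maximum principle on each annulus $\{R_2<|x|<R\}$ --- where $w_\epsilon\le0$ on both boundary spheres once $R$ is large --- and letting $R\to\infty$ gives $w_\epsilon\le0$ throughout $\Omega'$, i.e. $v\le M\psi+\epsilon\Phi$. Fixing $x$ and sending $\epsilon\to0$ yields $v(x)\le M e^{-\sqrt{\gamma'}|x|}$ for $|x|>R_2$, hence
\[
v(x)\,e^{\sqrt\gamma|x|}\le M\,e^{(\mu-\sqrt{\gamma'})|x|}\xrightarrow{|x|\to\infty}0,
\]
which is the claim (indeed with a quantitative exponential rate).

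The main obstacle --- and the point where every hypothesis is used --- is the passage to the unbounded domain. The growing barrier $\Phi$ is what permits the Phragm\'en--Lindel\"of step, and it can be chosen as a supersolution dominating $v$ only because the admissible growth rate $\mu=\sqrt\gamma$ is strictly below the threshold $\sqrt{\gamma'}$ furnished by the strict inequality $\liminf \Lap v/v>\gamma$. Without this strictness the two exponents would coincide and $\Phi$ could neither dominate $v$ nor serve as a supersolution. The remaining care is bookkeeping: the barrier inequalities only hold for $|x|$ large (handled by enlarging $R_2$), and the maximum principle for an $H^2_{\mathrm{loc}}$ subsolution is justified by the $w^+$ energy test already described.
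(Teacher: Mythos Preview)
The paper does not supply its own proof of this theorem; it simply cites \cite{BR08}, Lemma~2.2. Your Phragm\'en--Lindel\"of barrier argument is correct and self-contained: the strict inequality $\liminf \Lap v/v>\gamma$ furnishes a $\gamma'>\gamma$ so that $v$ is an $(-\Lap+\gamma')$-subsolution at infinity, the decaying barrier $e^{-\sqrt{\gamma'}|x|}$ and growing barrier $e^{\beta|x|}$ (with $\sqrt\gamma<\beta<\sqrt{\gamma'}$) are both supersolutions for large $|x|$, and the weak maximum principle on annuli---justified by testing against $w_\epsilon^+$, which is legitimate at the $H^2_{\mathrm{loc}}$ level---closes the comparison after sending $R\to\infty$ and then $\epsilon\to0$. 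This is essentially the mechanism used in the cited reference as well.
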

This result is established in \cite{BR08}, lemma 2.2.
In the context of equation \eqref{main}, we thus have the following corollary.
\begin{corollary}
\label{decry}
Let $u$ be a non-negative and bounded solution of 
$$
\Delta v +f(v)-\alpha g v=0 \quad \text{on } \R^{N-1}.
$$
Then, for any $\gamma>0$ there exists $C>0$ such that 
$$
0\leq v(y) \leq Ce^{-\gamma |y|} \qquad \text{and} \qquad
|\nabla v(y)|\leq Ce^{-\gamma |y|}.
$$
\end{corollary}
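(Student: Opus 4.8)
The plan is to obtain the decay of $v$ directly from Theorem~\ref{BR}, applied to $v$ itself on $\R^{N-1}$, and then to transfer it to $\nabla v$ by interior elliptic estimates. First note that we may assume $v>0$ everywhere. Indeed, since $f\in\CC^1$ and $f(0)=0$, the mean value theorem gives $|f(s)|\le C_f\,s$ for all $s\in[0,\|v\|_\infty]$, where $C_f=\sup_{[0,\|v\|_\infty]}|f'|$; hence the equation yields $-\Delta v+(\alpha g+C_f)v=f(v)+C_f v\ge 0$ with $\alpha g+C_f\ge 0$, so a zero of the nonnegative supersolution $v$ forces $v\equiv 0$ by the strong maximum principle, in which case the claim is trivial.

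Assume now $v>0$. The equation can be written as
\[
\frac{\Delta v(y)}{v(y)}=\alpha g(y)-\frac{f(v(y))}{v(y)}\ge\alpha g(y)-C_f ,
\]
and the right-hand side tends to $+\infty$ as $|y|\to\infty$ by \eqref{Hypginf}; thus $\liminf_{|y|\to\infty}\Delta v/v=+\infty$. Since also $v\le\|v\|_\infty\le\|v\|_\infty e^{\sqrt{\gamma}|y|}$ for every $\gamma>0$, Theorem~\ref{BR} (in dimension $N-1$) applies to $v$ with an arbitrarily large $\gamma$, and applying it with $\gamma$ replaced by $\gamma^2$ gives $v(y)e^{\gamma|y|}\to 0$ as $|y|\to\infty$, for every $\gamma>0$. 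Being continuous and vanishing at infinity, $y\mapsto v(y)e^{\gamma|y|}$ is bounded, which is the first asserted estimate. This step carries essentially the whole content of the corollary; the only point that requires a little thought is the verification that $\liminf_{|y|\to\infty}\Delta v/v>\gamma$ for every $\gamma$ — that is, that the confining potential $\alpha g$ dominates the reaction term $f(v)/v$ at infinity — and this is precisely where $f\in\CC^1$, $f(0)=0$, the boundedness of $v$, and \eqref{Hypginf} come in.

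It remains to estimate $\nabla v$. Fix $\gamma>0$ and apply the previous step with rate $2\gamma$ to obtain $C$ with $v(y)\le Ce^{-2\gamma|y|}$ on $\R^{N-1}$. On each unit ball $B_1(y)$ the function $v$ solves $\Delta v=\alpha g v-f(v)$, and, using $|f(v)|\le C_f v$ once more, the $L^p(B_1(y))$-norm of the right-hand side, for a fixed $p>N-1$, is bounded by $C\bigl(1+\|g\|_{L^\infty(B_1(y))}\bigr)\|v\|_{L^\infty(B_1(y))}$. Since $g$ is continuous — with at most polynomial growth in the models considered, indeed $g(y)=|y|^2$ in \eqref{DP3} — while $\|v\|_{L^\infty(B_1(y))}\le Ce^{-2\gamma(|y|-1)}$, this norm is $\le Ce^{-\gamma|y|}$. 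The interior $W^{2,p}$ estimate followed by the Sobolev embedding $W^{2,p}(B_{1/2})\hookrightarrow\CC^1(\overline{B_{1/2}})$, with constants independent of $y$ by translation invariance, then gives
\[
\|\nabla v\|_{L^\infty(B_{1/2}(y))}\le C\bigl(\|v\|_{L^\infty(B_1(y))}+\|\Delta v\|_{L^p(B_1(y))}\bigr)\le Ce^{-\gamma|y|},
\]
hence $|\nabla v(y)|\le Ce^{-\gamma|y|}$, which completes the proof.
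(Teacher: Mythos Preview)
Your proof is correct and follows exactly the paper's (very brief) approach: invoke Theorem~\ref{BR} for the decay of $v$, then transfer it to $\nabla v$ via local $L^p$ elliptic estimates. One small remark: your parenthetical restriction to polynomially growing $g$ is overly cautious---since $v$ decays faster than \emph{any} exponential, choosing the auxiliary rate large enough gives $\|g\|_{L^\infty(B_1(y))}\,\|v\|_{L^\infty(B_1(y))}\le Ce^{-\gamma|y|}$ whenever $g$ has at most exponential growth, which already covers every instance the paper actually uses.
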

\begin{proof}
The estimate on $v$ comes directly from Theorem \ref{BR} and the estimate on $|\nabla v|$ derives from standard global $L^p$ estimates. 
\end{proof}

\section{The case of a  Fisher-KPP non-linearity. Asymptotic profiles.}
\label{section: asympt}

In this section, we are interested in the asymptotic profiles of a
traveling front solution of \eqref{EP} as $x_1\rightarrow \pm \infty$. Hence, we are looking for solutions of the following equation  
\begin{equation}
\label{PA}
\begin{cases}
\Delta V +f (V) -\alpha g(y) V=0, \quad & y\in \R^{N-1},
\\
V \geq 0, \quad V \text{ bounded.}
\end{cases}
\end{equation}
We assume that $f:\R \rightarrow \R \text{ is } \CC^1$,
\begin{equation}
\label{Hypfpos}
 f(0)=f(1)=0,\; f>0 \text{ on } (0,1)
\end{equation}
and 
\begin{equation}
\label{HypfFisher-KPP}
 s\in (0,1]\mapsto \frac{f(s)}{s} \text{ is decreasing.}
\end{equation}
Since the constant function 0 is always a solution, the problem
is to know when there exist non-zero solutions. As we will see here, the existence of such a positive asymptotic profile is characterized by the
sign of the principal eigenvalue of the linearized operator around 0. We now make this notion precise.
 
\subsection{Principal eigenvalue of the linearized operator}
\label{principal}

To start with, let us define the natural weighted space $$\HH=\{v\in
H^1(\R^{N-1}) \, , \, \sqrt{g}u\in 
L^2(\R^{N-1})\}$$ and its associated norm. For $v\in \HH$, we set $\|v\|_\HH = (
\|v\|^2_{H^1}+\|\sqrt{g}v\|^2_{L^2})^{\frac{1}{2}}$. 
The linearized operator about 0 is $L\varphi=-\Delta \varphi
+\big(\alpha g(y)-f'(0)\big)\varphi$ for $\varphi \in \HH$. We are interested in the
eigenvalues of $L$. Even though the problem is set on all of $\R^{N-1}$, the term
in $\alpha g(y)$ yields compactness of
the injection $\HH \hookrightarrow L^2(\R^{N-1})$. Hence the existence
of a principal eigenvalue is obtained as usual.
\begin{theorem} Let us define 
$$
R_\alpha(\varphi)= \frac{\int |\nabla \varphi|^2+
  \big(\alpha g-f'(0)\big)\varphi^2 }{\int \varphi ^2}.
$$

The operator $L$ has a smallest eigenvalue 
\begin{equation}
\la= \inf_{\varphi \in \HH \setminus \{0\} } R_\alpha (\varphi).
\label{QR}
\end{equation}
Moreover there exists a unique positive eigenfunction associated
  with $\la$ of $L^2$-norm equal to 1, called $\pa$ in the following.
The eigenspace associated with $\la$ is spanned by  $\pa$.
\end{theorem}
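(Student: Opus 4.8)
The plan is to establish the existence of a principal eigenvalue for $L$ via the standard variational approach, exploiting the compactness gained from the confining potential $\alpha g(y)$. First I would verify that $R_\alpha$ is well-defined and bounded below on $\HH\setminus\{0\}$: since $g\geq 0$, we have $R_\alpha(\varphi)\geq -f'(0)$ for all $\varphi$, so $\la:=\inf_{\varphi\in\HH\setminus\{0\}}R_\alpha(\varphi)$ is a finite real number. Then I would take a minimizing sequence $(\varphi_n)\subset\HH$ normalized by $\|\varphi_n\|_{L^2}=1$. From $R_\alpha(\varphi_n)\to\la$ one reads off that $\int|\nabla\varphi_n|^2$ and $\int g\,\varphi_n^2$ stay bounded, hence $(\varphi_n)$ is bounded in $\HH$.

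The key analytic input is the compact embedding $\HH\hookrightarrow L^2(\R^{N-1})$. I would prove this by splitting: on a ball $B_R$ the embedding $H^1(B_R)\hookrightarrow L^2(B_R)$ is compact by Rellich; outside $B_R$, for $|y|\geq R$ we have $g(y)\geq \inf_{|y|\geq R} g =: m_R$ with $m_R\to+\infty$ by \eqref{Hypginf}, so $\int_{|y|\geq R}\varphi^2\leq m_R^{-1}\int_{|y|\geq R} g\,\varphi^2\leq m_R^{-1}\|\varphi\|_\HH^2$, giving uniform smallness of the tails. A standard diagonal argument then upgrades weak convergence in $\HH$ to strong convergence in $L^2$. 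Applying this to $(\varphi_n)$, up to a subsequence $\varphi_n\rightharpoonup\pa$ weakly in $\HH$ and $\varphi_n\to\pa$ strongly in $L^2$, so $\|\pa\|_{L^2}=1$; by weak lower semicontinuity of the convex functional $\varphi\mapsto\int|\nabla\varphi|^2+\alpha g\varphi^2$ (using Fatou for the $g$-term, which is fine since $g\geq 0$), we get $R_\alpha(\pa)\leq\la$, whence $R_\alpha(\pa)=\la$ and the infimum is attained. Writing the Euler–Lagrange equation shows $L\pa=\la\pa$ weakly, and elliptic regularity gives $\pa\in H^2_{\mathrm{loc}}$, so $\la$ is genuinely an eigenvalue.

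Next I would obtain positivity and simplicity. Since $R_\alpha(|\varphi|)=R_\alpha(\varphi)$, the minimizer may be taken nonnegative; the strong maximum principle (applied locally, after noting $\pa$ satisfies $-\Lap\pa+(\alpha g-f'(0)-\la)\pa=0$, a linear elliptic equation with locally bounded coefficients) forces $\pa>0$ everywhere. For simplicity: if $\psi$ is another eigenfunction for $\la$, then so is $|\psi|$ after the usual argument, hence $|\psi|>0$; but two positive eigenfunctions for the bottom eigenvalue of a self-adjoint Schrödinger-type operator must be proportional — one way is to set $t^*=\sup\{t: \pa-t\psi\geq 0\}$, note $w:=\pa-t^*\psi$ is a nonnegative eigenfunction vanishing somewhere, so by the strong maximum principle $w\equiv 0$. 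This also shows the eigenspace is one-dimensional, spanned by $\pa$, and that $\pa$ is the unique positive $L^2$-normalized eigenfunction; finally $\la$ is the \emph{smallest} eigenvalue because the Rayleigh quotient characterization \eqref{QR} bounds every eigenvalue from below by $\la$.

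The main obstacle is the compactness argument for $\HH\hookrightarrow L^2$: one must handle the fact that the domain is unbounded and that $g$ may be degenerate or grow slowly near infinity. The tail estimate above resolves this cleanly precisely because \eqref{Hypginf} forces $\inf_{|y|\geq R} g\to+\infty$; without that hypothesis the embedding would fail. A secondary technical point is justifying that minimizing over $|\varphi|$ does not lose the constraint and that weak lower semicontinuity holds for the possibly unbounded potential term — both are standard once one observes $\alpha g\varphi^2\geq 0$ so Fatou applies directly to that term while the gradient term is handled by weak lower semicontinuity of the $H^1$ seminorm.
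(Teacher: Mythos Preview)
Your proposal is correct and follows precisely the classical variational route that the paper invokes: the paper's own proof consists of a single sentence stating that the result is classical once one has the compact embedding $\HH\hookrightarrow L^2(\R^{N-1})$, with a reference to Evans. You have simply written out the details of that argument, including a clean justification of the compact embedding via the tail estimate coming from \eqref{Hypginf}.
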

The proof is classical due to the compactness of $\HH
\hookrightarrow L^2(R^{N-1})$. We refer for example to \cite{Evans}.   
\begin{remark}
If $g(y)=|y|^2$, the problem can be rescaled  and we obtain the harmonic
oscillator for which principal eigenvalue and eigenfunction are well
known \cite{schwartz}. In that case, $\la=(N-1)\sqrt{\alpha}-f'(0)$ and
$\pa=\left( \frac{\sqrt{\alpha}}{\pi}\right)^{\frac{1}{N-1}}
e^{-\frac{\sqrt{\alpha}}{2}|y|^2}$.  
\end{remark}
Since the existence of a positive solution of \eqref{PA} will depend on the
sign of the principal eigenvalue, the following proposition describes the
behavior of $\la$ as a function of $\alpha$.
\begin{proposition}
\label{lacomp}
The function $\alpha \mapsto \la$ is continuous, increasing and concave
for $\alpha \in (0, +\infty)$. Moreover $\lim_{\alpha \rightarrow 0}
      \la =-f'(0)$ and for $\alpha$ large enough $\la>0$. 
\end{proposition}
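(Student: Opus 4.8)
The plan is to exploit the variational characterization $\la = \inf_{\varphi \in \HH\setminus\{0\}} R_\alpha(\varphi)$ and the fact that $R_\alpha(\varphi)$ is, for each fixed $\varphi$, an affine (hence continuous, increasing, concave) function of $\alpha$. First I would establish \emph{monotonicity}: for $\alpha_1 < \alpha_2$ and any $\varphi\in\HH\setminus\{0\}$ with $\int \varphi^2 = 1$ we have $R_{\alpha_2}(\varphi) = R_{\alpha_1}(\varphi) + (\alpha_2-\alpha_1)\int g\varphi^2$, and since $g > 0$ on $\R^{N-1}\setminus\{0\}$ the last term is strictly positive; taking the infimum over $\varphi$ gives $\la[\alpha_1] \le \la[\alpha_2]$, and strict monotonicity follows by testing with the principal eigenfunction $\varphi_{\alpha_1}$, which is positive, so that $\int g\varphi_{\alpha_1}^2 > 0$ and $\la[\alpha_2] \le R_{\alpha_2}(\varphi_{\alpha_1}) = \la[\alpha_1] + (\alpha_2-\alpha_1)\int g\varphi_{\alpha_1}^2$ — wait, that inequality goes the wrong way to conclude strictness directly, so instead I would argue strictness by noting $\la[\alpha_2] = R_{\alpha_2}(\varphi_{\alpha_2}) = R_{\alpha_1}(\varphi_{\alpha_2}) + (\alpha_2 - \alpha_1)\int g\varphi_{\alpha_2}^2 \ge \la[\alpha_1] + (\alpha_2-\alpha_1)\int g\varphi_{\alpha_2}^2 > \la[\alpha_1]$.

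Next, \emph{concavity}: $\alpha \mapsto R_\alpha(\varphi)$ is affine in $\alpha$ for each fixed $\varphi$, and $\la$ is the pointwise infimum of this family of affine functions, hence concave on $(0,+\infty)$ as an infimum of concave (affine) functions. \emph{Continuity} on $(0,+\infty)$ is then automatic, since a finite concave function on an open interval is continuous; alternatively it follows from being monotone and concave. I would also give a direct two-sided Lipschitz-type bound if a cleaner statement is wanted: $0 \le \la[\alpha_2] - \la[\alpha_1] \le (\alpha_2 - \alpha_1)\int g\varphi_{\alpha_1}^2$, using $\varphi_{\alpha_1}$ as a test function for $\la[\alpha_2]$.

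For the \emph{asymptotics as $\alpha\to 0$}: clearly $\la \le R_\alpha(\varphi)$ for any fixed admissible $\varphi$, and as $\alpha \to 0$ the term $\alpha\int g\varphi^2$ can be made small, so $\limsup_{\alpha\to 0}\la \le -f'(0) + \delta$ for appropriate near-minimizers $\varphi$ of the Dirichlet quotient; for the lower bound, $\la \ge \inf_\varphi \frac{\int|\nabla\varphi|^2 - f'(0)\varphi^2}{\int\varphi^2} = -f'(0)$ since the first quotient's infimum over $H^1(\R^{N-1})$ is $-f'(0)$ (take $\varphi$ with $\nabla\varphi$ negligible). Monotonicity then forces $\la \downarrow -f'(0)$ as $\alpha\downarrow 0$. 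For \emph{$\la > 0$ when $\alpha$ is large}: I would lower-bound $R_\alpha(\varphi) \ge \int g\varphi^2 \cdot \alpha - f'(0)$ restricted to $\varphi$ supported away from a neighborhood of the origin, but the honest way is to use the known large-$\alpha$ behavior of the ground-state energy of a Schrödinger operator with confining potential $\alpha g$: by a scaling/comparison argument, if $g(y)\ge c|y|^2$ near infinity and $g>0$ elsewhere then $\la \ge c'\sqrt{\alpha} - f'(0) \to +\infty$; more robustly, for any fixed $R$, $\alpha g \ge \alpha \min_{|y|=R} g =: \alpha m_R$ on $\{|y|\ge R\}$, and combining the Dirichlet eigenvalue on $B_R$ with this exterior bound via a partition-of-unity (IMS-type) localization shows $\la \to +\infty$.

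The main obstacle is the last point — showing $\la > 0$ for large $\alpha$ rigorously without circular reliance on the harmonic-oscillator special case — since it genuinely uses the confining nature of $g$ (hypothesis \eqref{Hypginf}) rather than just positivity; the cleanest route is an IMS localization formula splitting $\R^{N-1}$ into a large ball (where the Dirichlet Laplacian contributes a fixed positive amount after the $-f'(0)$ shift is beaten) and its complement (where $\alpha g \to \infty$), with the localization error $O(R^{-2})$ absorbed. The monotonicity, concavity, continuity, and $\alpha\to 0$ limit are all routine consequences of the variational formula and require no delicate analysis.
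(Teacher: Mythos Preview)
Your treatment of monotonicity, concavity, continuity, and the limit $\alpha\to 0$ is correct and essentially coincides with the paper's argument: both rely on the variational formula, use that $\alpha\mapsto R_\alpha(\varphi)$ is affine for fixed $\varphi$ (so the infimum is concave), and obtain the two-sided bound $\eta\int g\varphi_{\alpha+\eta}^2 \le \lambda_{\alpha+\eta}-\la \le \eta\int g\pa^2$ to get strict monotonicity and local Lipschitz continuity. For $\alpha\to 0$ the paper likewise tests with compactly supported functions having small Dirichlet energy.

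The genuine divergence is in the final step, showing $\la>0$ for large $\alpha$. You propose an IMS-type localization (partition into $B_R$ and its complement, use the Dirichlet eigenvalue inside and $\alpha\min_{|y|\ge R}g$ outside). This works and in fact yields the stronger conclusion $\la\to+\infty$, but it is more machinery than needed. The paper instead argues by contradiction in a few lines, exploiting the compact embedding $\HH\hookrightarrow L^2(\R^{N-1})$ already established: if $\la\le 0$ for all $\alpha$, then $\int g\pa^2 \le f'(0)/\alpha\to 0$, while $(\pa)$ remains bounded in $\HH$; extracting a strongly $L^2$-convergent subsequence, the limit must vanish (since $\int g\pa^2\to 0$ forces the limit to vanish where $g>0$, i.e.\ a.e.), contradicting $\|\pa\|_{L^2}=1$. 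This compactness argument is shorter and self-contained given what the paper has already set up; your IMS route is more quantitative but heavier, and you correctly flag it as the main effort in your approach.
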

\begin{proof}
Let us fix $\alpha >0$ and $\eta>0$.
Equation \eqref{QR} shows that
$$
\lambda_{\alpha + \eta} \leq \int |\nabla \pa|^2 +\big(
(\alpha+\eta)g-f'(0)\big) \pa^2 =\la +\eta \int g(y) \pa^2.
$$
Similarly, we obtain $\la \leq \lambda_{\alpha +\eta} -\eta \int g
\varphi_{\alpha+\eta}^2$. From this we derive: 
$$
0< \eta \int g \varphi_{\alpha+\eta}^2 \leq \lambda_{\alpha
  +\eta}-\la \leq \eta \int g\pa^2.
$$
This and similar computation for $\la - \lambda_{\alpha-\eta}$ yields that $\alpha \mapsto \la$ is increasing and locally Lipschitz on $(0,+\infty)$.

Concavity is classical. It suffices to observe that for each fixed $\varphi$, $$
\alpha \mapsto R_\alpha(\varphi)= \frac{\int |\nabla \varphi|^2+
  (\alpha g-f'(0))\varphi^2 }{\int \varphi ^2}
$$ 
is an affine function of $\alpha$ and that $\la= \inf_{\varphi \in \HH \setminus \{0\} } R_\alpha (\varphi)$.

In order to prove that $\la \xrightarrow{\alpha \rightarrow 0}-f'(0)$,
for any $\ep >0$ choose a function $\psi_\ep
$ of compact support with
$\|\psi_\ep\|_{L^2}=1$ and $\int |\nabla \psi_\ep|^2< \ep$. Let $\supp \psi_\ep \subset B_{R_\ep}$.
From \eqref{QR} we get  
\begin{equation*}
-f'(0) \leq \la \leq \ep+ \alpha \max_{B_{R_\epsilon}}g-f'(0) 
\end{equation*}
So for any $\displaystyle \alpha <\frac{\ep}{\max_{B_{R_\epsilon}}g}$, 
$$
-f'(0)\leq \la \leq -f'(0)+2\ep.
$$

Now we claim that $\la >0$ for large enough $\alpha$. Argue by contradiction and assume that $\la \leq 0$ for all $\alpha \in
(0,+\infty)$.  Since
$$
0\geq \la=\int |\nabla \pa |^2+\alpha \int g \pa^2 - f'(0),
$$ we get
$$
 \int g \pa^2 \leq \frac{1}{\alpha} f'(0)
$$
and $\pa \rightarrow 0$ in $L^2(\R^{N-1} \setminus B_R)$ for all $R>0$. Furthermore, $\pa$ is bounded in $\HH$ and up to extraction we can assume that $\pa$ converges strongly in $L^2(\R^{N-1})$, thus $\pa$ converges to $0$ in $L^2$ but this is impossible since $\int {\pa}^2=1$ for all $\alpha>0$.
\end{proof}
\begin{corollary}
\label{alpha0}
There exists $\alpha_0 >0$ such that $\la <0$ for $\alpha <\alpha_0$, $\lambda_{\alpha_0}=0$
and $\la > 0$ for $\alpha >\alpha_0$. 
\end{corollary}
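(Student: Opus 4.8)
The plan is to read everything off Proposition~\ref{lacomp}. That proposition gives us three facts about the map $\alpha \mapsto \la$ on $(0,+\infty)$: it is continuous, it satisfies $\la \to -f'(0)$ as $\alpha \to 0^+$, and $\la > 0$ for $\alpha$ large. The only thing to observe in addition is that $f'(0) > 0$ in both cases under consideration, so that $-f'(0) < 0$: in the bistable case this is part of the hypotheses, while in the KPP case $f > 0$ on $(0,1)$ together with $f(s) < f'(0)s$ on $(0,1)$ forces $f'(0) \ge \sup_{s \in (0,1)} f(s)/s > 0$. Consequently $\la < 0$ for $\alpha$ close to $0$ and $\la > 0$ for $\alpha$ large, so by the intermediate value theorem there exists $\alpha_0 \in (0,+\infty)$ with $\lambda_{\alpha_0} = 0$.

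To promote this to the claimed three-fold sign dichotomy, I would invoke the strict monotonicity already contained in the proof of Proposition~\ref{lacomp}. Indeed, the estimate proved there,
$$
0 < \eta \int g\, \varphi_{\alpha+\eta}^2 \le \lambda_{\alpha+\eta} - \la ,
$$
valid for all $\alpha > 0$ and $\eta > 0$, shows that $\alpha \mapsto \la$ is strictly increasing on $(0,+\infty)$. Therefore $\alpha_0$ is the unique zero of $\la$, and strict monotonicity gives $\la < \lambda_{\alpha_0} = 0$ for every $\alpha < \alpha_0$ and $\la > \lambda_{\alpha_0} = 0$ for every $\alpha > \alpha_0$, which is exactly the statement.

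There is no genuine obstacle here; the corollary is simply a repackaging of Proposition~\ref{lacomp}. The one point worth keeping in mind is that the positivity $f'(0) > 0$ — holding in both the KPP and the bistable case — is what makes the limiting value $-f'(0)$ at $\alpha = 0$ strictly negative, and hence what guarantees $\alpha_0 > 0$ rather than a degenerate threshold at the origin.
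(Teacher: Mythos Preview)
Your proposal is correct and matches the paper's approach: the corollary is stated without explicit proof as an immediate consequence of Proposition~\ref{lacomp}, and you have simply spelled out the obvious details (continuity, strict monotonicity, the limit $-f'(0)<0$ at $\alpha\to 0^+$, and positivity for large $\alpha$). The only superfluous remark is your treatment of the bistable case, since at this point in the paper the corollary sits in the KPP section; but this does no harm.
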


\subsection{If $g$ vanishes on $B_{r_0}$}
\label{gvanish}
The main part of the proof still holds if $g$ vanishes on $B_{r_0}$ but the result is slightly modified.

In this section, we assume that there exists $r_0>0$ such that \eqref{Hypgpos} is substituted by the following assumption
\begin{equation}
\label{Hypg'}
 g:\R^{N-1} \rightarrow \R_+ \in \CC^0, \;  g\equiv 0 \text{ on } B_{r_0} \text{ and } g>0 \text{ on }\R^{N-1}\setminus B_{r_0}. 
\end{equation}
We define $\lambda_\Delta$ the principal eigenvalue of the Laplacian on $B_{r_0}$ with Dirichlet boundary conditions, i.e.
$$
\begin{cases} 
-\Delta \phi_0=\lambda_\Delta \phi_0 & \text{on }B_{r_0},\\
\phi_0=0 & \text{on } \partial B_{r_0}.
\end{cases}
$$

In this case, the principal eigenvalue of the linearized operator about 0 is well defined and Proposition \ref{lacomp} becomes
\begin{proposition}
The function $\alpha \mapsto \la$ is continuous, increasing and concave
for $\alpha \in (0, +\infty)$, and $\lim_{\alpha \rightarrow 0}
      \la =-f'(0)$. Now there are two cases:\\
i) If $f'(0) < \lambda_\Delta$, then for $\alpha$ large enough $\la>0$. \\
ii) If $f'(0)\geq \lambda_\Delta$, then $\la\leq 0$ for all $\alpha>0$.
\end{proposition}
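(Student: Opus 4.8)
The plan is to reuse the proof of Proposition~\ref{lacomp}, noticing that the arguments there establishing continuity, strict monotonicity, concavity of $\alpha\mapsto\la$, and the limit $\la\to-f'(0)$ as $\alpha\to0$ only rely on $g\geq 0$ and $g\not\equiv 0$ (for strict monotonicity one still has $\int g\,\pa^2\geq\int_{\R^{N-1}\setminus B_{r_0}}g\,\pa^2>0$, since $g>0$ outside $B_{r_0}$ and $\pa>0$); all of this holds under \eqref{Hypg'}, so that part carries over verbatim. Only the behaviour as $\alpha\to+\infty$ must be revisited, and the key new ingredient is an upper bound valid for \emph{every} $\alpha>0$: extending the Dirichlet eigenfunction $\phi_0$ of $-\Delta$ on $B_{r_0}$ by $0$ produces an element of $H^1(\R^{N-1})$ with $\sqrt{g}\,\phi_0\equiv 0$ (because $g\equiv 0$ on $\supp\phi_0\subset\overline{B_{r_0}}$), hence $\phi_0\in\HH$, and \eqref{QR} yields
\[
\la \;\leq\; R_\alpha(\phi_0) \;=\; \frac{\int_{B_{r_0}}|\nabla\phi_0|^2-f'(0)\int_{B_{r_0}}\phi_0^2}{\int_{B_{r_0}}\phi_0^2} \;=\; \lambda_\Delta-f'(0).
\]
Case (ii) follows at once: if $f'(0)>\lambda_\Delta$ then $\la\leq\lambda_\Delta-f'(0)<0$ for all $\alpha>0$.

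For case (i), assume $f'(0)<\lambda_\Delta$. By monotonicity $L:=\lim_{\alpha\to+\infty}\la$ exists, and the bound above gives $L\leq\lambda_\Delta-f'(0)$; I would show that in fact $L=\lambda_\Delta-f'(0)>0$, which forces $\la>0$ for $\alpha$ large. Let $\pa$ be the $L^2$-normalized principal eigenfunction. Writing $\la=\int|\nabla\pa|^2+\alpha\int g\,\pa^2-f'(0)$ and using the upper bound gives $\int|\nabla\pa|^2+\alpha\int g\,\pa^2\leq\lambda_\Delta$, so $(\pa)$ is bounded in $\HH$ and $\int g\,\pa^2\leq\lambda_\Delta/\alpha\to0$. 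Pick a sequence $\alpha_n\to+\infty$; by compactness of $\HH\hookrightarrow L^2(\R^{N-1})$ one extracts a subsequence with $\varphi_{\alpha_n}\to\varphi_\infty$ strongly in $L^2$, weakly in $H^1$, $\|\varphi_\infty\|_{L^2}=1$, and (along a further subsequence) $\varphi_{\alpha_n}\to\varphi_\infty$ a.e. Fatou's lemma then gives $\int g\,\varphi_\infty^2\leq\liminf\int g\,\varphi_{\alpha_n}^2=0$, so $\varphi_\infty=0$ a.e.\ on $\R^{N-1}\setminus B_{r_0}$, whence $\varphi_\infty\in H^1_0(B_{r_0})$. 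Weak lower semicontinuity of the Dirichlet energy together with the Rayleigh characterization of $\lambda_\Delta$ then yield
\[
L=\lim_n\lambda_{\alpha_n}\geq\liminf_n\Big(\int|\nabla\varphi_{\alpha_n}|^2-f'(0)\Big)\geq\int_{B_{r_0}}|\nabla\varphi_\infty|^2-f'(0)\geq\lambda_\Delta-f'(0),
\]
which is the desired reverse inequality; since $\lambda_\Delta-f'(0)>0$ we conclude $\la>0$ for all large $\alpha$.

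The step I expect to be the main obstacle is this last compactness argument, where two points are essential: that the weak $H^1$ limit $\varphi_\infty$ is nontrivial — which is precisely what the compact embedding $\HH\hookrightarrow L^2(\R^{N-1})$, i.e.\ the confinement produced by $g$ at infinity, provides — and that an $H^1(\R^{N-1})$ function vanishing a.e.\ outside the smooth ball $B_{r_0}$ belongs to $H^1_0(B_{r_0})$, which is what legitimizes $\int_{B_{r_0}}|\nabla\varphi_\infty|^2\geq\lambda_\Delta\int_{B_{r_0}}\varphi_\infty^2=\lambda_\Delta$. Everything else is a routine transcription of the proof of Proposition~\ref{lacomp}.
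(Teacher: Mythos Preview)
Your proof is correct and follows essentially the same route as the paper: both use $\phi_0$ as a test function in the Rayleigh quotient for (ii), and for (i) both extract a subsequence of $(\pa)$ converging to a nontrivial element of $H^1_0(B_{r_0})$ and then invoke the variational characterization of $\lambda_\Delta$. The paper argues (i) by contradiction (assuming $\la\leq 0$ for all $\alpha$) and passes the eigenvalue equation itself to the limit in $B_{r_0}$, whereas your direct argument via weak lower semicontinuity of the Dirichlet energy is a minor variant that in fact yields the slightly sharper conclusion $\la\to\lambda_\Delta-f'(0)$ as $\alpha\to+\infty$.
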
 
\begin{proof}
The proof of the first part of the proposition is exactly the same as in Proposition \ref{lacomp}. We just have to prove i) and ii).

i) We assume that $f'(0) < \lambda_\Delta$ and argue by contradiction assuming that $\la \leq 0$ for all $\alpha \in (0, +\infty)$. As in the proof of proposition \ref{lacomp}, we have 
$$
\int g \pa^2 \leq \frac{1}{\alpha} f'(0)
$$
and this yields $\pa \rightarrow 0$ in $L^2(\R^{N-1}\setminus B_R)$ for $\alpha \rightarrow +\infty$ but now for all $R>r_0$ only.

As before $\pa$ is bounded in $\HH$ and up to extraction, we have $\la \rightarrow \lambda\leq 0$, weak convergence in $\HH$ and strong convergence in $L^2$ of $\pa$ to $\phi$. The limit $\phi$ verifies $\int \phi^2=1$, $\phi \equiv 0$ for $|y|>r_0$ and 
$$
-\Delta \phi-f'(0) \phi=\lambda \phi
$$ 
Thus $\phi \in H^1_0(B_{r_0})$ must coincide with $\phi_0$ in $B_{r_0}$ and $\lambda +f'(0)=\lambda_\Delta$ leading to $f'(0)\geq \lambda_\Delta$ since $\lambda \leq 0$. This is a contradiction.

ii) By taking $\varphi=\phi_0$ in the Rayleigh quotient \eqref{QR}, where $\phi_0$ is the principal eigenvalue of the above problem in $B_{r_0}$ with Dirichlet boundary conditions, we see that $\lambda_\alpha\leq \lambda_\Delta -f'(0) \leq 0$ for all $\alpha>0$. 
\end{proof}

In the following, we will not state the results specifically for this case \eqref{Hypg'} and will rather assume \eqref{Hypgpos}. However, the proofs and results developed here carry over to this case with the obvious modifications.

\subsection{Existence of non-zero asymptotic profile}


\begin{theorem}
\label{existPA}
For $\alpha \geq \alpha_0$, there is no solution of \eqref{PA}, where $\alpha_0$ is defined in corollary \ref{alpha0}. 
For $\alpha < \alpha_0$, there exists a unique positive
solution of \eqref{PA}. 
\end{theorem}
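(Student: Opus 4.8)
The plan is to establish the three parts separately—non-existence of a \emph{positive} solution when $\alpha\ge\alpha_0$, and existence and uniqueness of one when $\alpha<\alpha_0$. Throughout I would use three preliminary facts about a non-negative bounded solution $V$ of \eqref{PA}: (i) by the strong maximum principle applied to $-\Lap V+(\alpha g+c)V=f(V)+cV\ge 0$ with $c$ large, $V$ is either $\equiv 0$ or strictly positive; (ii) by Corollary \ref{decry}, which applies since $\Lap V/V=-f(V)/V+\alpha g(y)\to+\infty$, both $V$ and $\nabla V$ decay exponentially; (iii) $0<V\le 1$, because on the bounded open set $\{V>1\}$ one has $\Lap V=-f(V)+\alpha g V\ge 0$ while $V=1$ on its boundary, so that set is empty.

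\textbf{Non-existence for $\alpha\ge\alpha_0$.} Suppose $V>0$ solves \eqref{PA}; by Corollary \ref{alpha0}, $\la\ge0$. Multiplying the equation by $V$ and integrating over $B_R$, the boundary terms vanish as $R\to\infty$ by the exponential decay, so $\int|\nabla V|^2+\alpha\int gV^2=\int f(V)V$ (in particular $V\in\HH$ and $\int gV^2<\infty$). Since $0<V\le1$ and $f(s)<f'(0)s$ on $(0,1]$, the right-hand side is strictly smaller than $f'(0)\int V^2$, hence $R_\alpha(V)<0$ and therefore $\la\le R_\alpha(V)<0$, a contradiction.

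\textbf{Existence for $\alpha<\alpha_0$.} Here $\la<0$, and I would run the sub/supersolution method. The constant $\overline V\equiv1$ is a supersolution ($\Lap1+f(1)-\alpha g=-\alpha g\le0$), and $\underline V=\varepsilon\pa$ is a subsolution for small $\varepsilon$: using $L\pa=\la\pa$ one gets $\Lap\underline V+f(\underline V)-\alpha g\underline V=f(\varepsilon\pa)-\varepsilon(f'(0)+\la)\pa$, which is $\ge0$ as soon as $f(\varepsilon\pa)/(\varepsilon\pa)\ge f'(0)+\la$ pointwise—true once $\varepsilon\|\pa\|_{L^\infty}$ is small, since $\la<0$ and $f(s)/s\to f'(0)$ as $s\to0^+$—and then $\underline V\le\overline V$. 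A solution $\underline V\le V\le\overline V$ is then produced by monotone iteration with an exhaustion by balls: on $B_R$ pick $K_R$ so large that $s\mapsto K_Rs+f(s)-\alpha g(y)s$ is nondecreasing on $[0,1]$ for $y\in B_R$ ($g$ being bounded on $B_R$), iterate downward from $1$ with boundary value $1$ on $\partial B_R$ to get $V_R$ with $\varepsilon\pa\le V_R\le1$, check that $R\mapsto V_R$ is nonincreasing (comparison within the iteration), and pass to the limit using interior elliptic estimates; the limit $V\ge\varepsilon\pa>0$ solves \eqref{PA}.

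\textbf{Uniqueness for $\alpha<\alpha_0$, and main obstacle.} Given two positive bounded solutions $V_1,V_2$ (both valued in $(0,1]$), I would use the Brezis--Oswald identity: integrating $\big({-}\Lap V_1/V_1+\Lap V_2/V_2\big)(V_1^2-V_2^2)$ over $\R^{N-1}$ equals, on one hand, $\int\big(f(V_1)/V_1-f(V_2)/V_2\big)(V_1^2-V_2^2)\le0$ because $s\mapsto f(s)/s$ is (strictly) decreasing on $(0,1]$ by \eqref{HypfKPP}, and, on the other hand, the nonnegative quantity $\int|\nabla V_1-(V_1/V_2)\nabla V_2|^2+|\nabla V_2-(V_2/V_1)\nabla V_1|^2$; so both sides vanish and $V_1\equiv V_2$. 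The conceptual steps are routine; the real work is the unbounded-domain analysis—securing uniform estimates so the exhaustion limit is a genuine positive solution, and, above all, making the Brezis--Oswald computation rigorous, i.e.\ justifying the integrations by parts using the exponential decay of $V_i,\nabla V_i$, the integrability $\int gV_i^2<\infty$ from the energy identity, and suitable truncations to control the cross-terms $(V_1/V_2)\nabla V_2$.
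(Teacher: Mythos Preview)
Your proposal is correct, and the existence step is the same as the paper's (sub-solution $\varepsilon\pa$, super-solution $1$). The other two parts differ in interesting ways.

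For non-existence when $\alpha\ge\alpha_0$, you test the equation against $V$ itself and read off $R_\alpha(V)<0$, contradicting $\la\ge0$. The paper instead tests against the eigenfunction $\pa$: integrating $(\Lap V+f(V)-\alpha gV)\pa$ and $(\Lap\pa+(f'(0)-\alpha g)\pa+\la\pa)V$ and subtracting gives $\int(f(V)-f'(0)V)\pa=\la\int V\pa$, which is impossible since the left side is negative and the right side nonnegative. Your route is arguably more elementary---it uses only the variational characterization of $\la$ and never touches $\pa$---while the paper's argument is the classical ``dual eigenfunction'' trick.

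For uniqueness, both arguments hinge on the strict monotonicity of $s\mapsto f(s)/s$, but the implementations differ. The paper works on the set $\Omega=\{V<W\}$ and uses the bounded test function $\beta_\epsi(W-V)$ in $\int(-V\Lap W+W\Lap V)\beta_\epsi(W-V)$; after integration by parts the terms containing $\beta_\epsi'$ are either nonnegative or tend to zero, giving a sign contradiction with $\int_\Omega(f(W)/W-f(V)/V)VW$. This avoids ever forming the ratios $V/W$ or $W/V$, so the exponential decay of $V,W,\nabla V,\nabla W$ from Corollary~\ref{decry} is all that is needed. Your Brezis--Oswald route is the natural alternative and leads to the same conclusion, but---as you correctly flag---it requires controlling cross-terms like $(V_1/V_2)\nabla V_2$ at infinity, for which upper exponential bounds alone are not quite enough; one then needs a truncation (e.g.\ replacing $V_i$ by $V_i+\delta$ or capping the ratios) and a limiting argument. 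The paper's cutoff $\beta_\epsi(W-V)$ is precisely such a device, chosen so that no lower bounds on the solutions are required.
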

\begin{proof}
Let us fix $\alpha \geq \alpha_0$. Then $\la \geq  0$.
Assume by contradiction that there exists a solution $V$
of \eqref{PA}. Then the strong maximum principle shows that $V>0$. 

Since $\pa$ is an eigenfunction of the linearized operator $L$ and
$V$ is solution of \eqref{PA}, we have 
\begin{eqnarray*}
\int \left(\Delta V +f(V)-\alpha gV \right)\pa & = & 0 \\
  & =  & \int  (\Delta \pa +(f'(0)-\alpha g)\pa + \la \pa)V
\end{eqnarray*}
Now from corollary \ref{decry}, $V$ and
$\nabla V$ are rapidly decreasing for
$|y|\rightarrow \infty$ and so we can apply Stokes formula
$\int \Delta V \, \pa= \int V \,\Delta \pa$. It yields
$\displaystyle \int (f(V)-f'(0)V) \pa = \la \int \pa V$ but
$f(V)-f'(0)V<0$ since $f$ is of Fisher-KPP type and $ \la \geq 0$ thus a
contradiction is obtained.

\bigskip
We now turn to the case $\alpha<\alpha_0$.
For $\alpha<\alpha_0$, the eigenvalue $\la$ is negative. Setting
$\psm=\ep \pa$ with $\ep>0$,
we get 
$$
\Delta \psm+f(\psm) - \alpha
g \psm=-\la \ep \pa+f(\ep\pa)-f'(0)\ep\pa \geq 0
$$ 
if $\ep>0$ is chosen small enough.
Hence $\psm$ is a sub-solution of \eqref{PA}. The constant function 1 is
a super-solution and 
$\psm\leq 1$ if $\ep$ is small enough. Therefore by the sub- and super-solution method,
there exists a 
solution $V$ such that $0<\psm\leq V \leq 1$.

Now consider $V$ and $W$ two non-zero solutions of
\eqref{PA}. We argue by contradiction and assume that $V \not\equiv W$. Then
 for example  $\Omega =\{ y\in \R^{N-1}, \,
V(y)<W(y)\}$ is not empty. Introduce a cutoff function
$\beta \in \CC^\infty (\R)$ with $\beta=0$ on $(-\infty, 1/2]$,
$\beta=1$ on $[1,+\infty)$ and $0<\beta'<4$ on $(1/2,1)$ and for all $\ep
>0$, let us set $\beta_\ep(s)=\beta \left( \frac{s}{\ep} \right)$.  
Using equation \eqref{PA}, we have
\begin{eqnarray*}
\int (-V \Delta W+ \Delta V W)\beta_\ep(W - V) 
&=& \int (V f(W)-f(V)W)\beta_\ep(W - V) \\
& &\xrightarrow{\ep \rightarrow 0} \int_{\Omega}(V f(W)-f(V)W) 
\end{eqnarray*}
by Lebesgue's dominated convergence theorem.
Owing to corollary \ref{decry}, $V$, $\nabla V$, $W$ and $\nabla W$ have exponential decay and thus Stokes
formula can be applied and we obtain
\begin{eqnarray*}
\int (-V \Delta W+ \Delta V W)\beta_\ep(W - V) 
= \int \beta_\ep '(W -V) \nabla
(W-V) .\left( V \nabla W - W \nabla V \right)\qquad \\
= \underbrace{\int \beta_\ep ' (W -V) V |\nabla (W
-V)|^2}_{=I_1} - \underbrace{\int \beta_\ep ' (W -V)(W -V) \nabla (W
-V). \nabla V}_{=I_2} .
\end{eqnarray*}
In the term $I_2$ the integrand satisfies 
$$
|\beta_\ep ' (W -V)(W-V)\nabla(W-V).\nabla W| \leq 4|\nabla(W-V)|. |\nabla W|
$$
Therefore by Lebesgue's Theorem of dominated convergence, we infer that $I_2\rightarrow 0$. Next the term $I_1$ satisfies $I_1\geq 0$. Consequently, we may write: 
$$
0\geq \int_{\Omega}\big( V f(W)-W f(V) \big)=\int_{\Omega}\left( \frac{f(W)}{W}-\frac{f(V)}{V}\right) VW
$$ 
which is a contradiction in view of \eqref{HypfFisher-KPP} as $W>V$ in $\Omega$. Hence $V=W$ and
the non-zero solution is unique.
\end{proof}



The last point concerns the stability of the asymptotic profiles
for $\alpha <\alpha_0$. Let us start by studying the energy of
$V$. For $w\in \HH$, we define the energy 
\begin{equation}
J_\alpha (w)=\int_{\R^{N-1}} |\nabla w(y)|^2 +\frac{\alpha}{2} g(y) w^2(y) -
F(w(y)) \,dy
\label{defenergie}
\end{equation} 
where $F(u)=\int_0^u f(t)dt$.
\begin{theorem}
For $\alpha<\alpha_0$, the unique positive solution
of \eqref{PA} $V$ is stable in the energy sense, i.e. $V$ is the
global minimum of $J_\alpha$ and, furthermore $J_\alpha (V) <0=J_\alpha(0)$.
\label{energy}
\end{theorem}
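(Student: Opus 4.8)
The plan is to prove directly the comparison $J_\alpha(w)\ge J_\alpha(V)$ for every competitor $w$, with equality only at $w=V$, and then to check the strict bound $J_\alpha(V)<0$; together these give both statements at once. Throughout I use that $V>0$ on all of $\R^{N-1}$ (strong maximum principle, as in the previous theorem), that $0<V\le 1$, and that $V$ and $\nabla V$ decay exponentially, by Corollary~\ref{decry}. It suffices to treat $w\in\HH$ with $0\le w\le 1$, the reduction to this range being standard: truncating a competitor above at the level $1$ does not increase $J_\alpha$ since $f(s)\le 0$ for $s\ge 1$, and the relevant competitors are nonnegative. I normalize the gradient term of $J_\alpha$ so that \eqref{PA} is exactly its Euler--Lagrange equation, i.e.\ $J_\alpha(w)=\int \tfrac12|\nabla w|^2+\tfrac\alpha2 g w^2-F(w)$. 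Once proved, the comparison shows at the same time that $J_\alpha$ is bounded below and that its infimum is attained precisely at $V$.

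The mechanism is a hidden convexity of $J_\alpha$ in the variable $\sigma=w^2$, in the spirit of Benguria--Brezis--Lieb and Brezis--Oswald. Since $|\nabla w|^2=\tfrac14|\nabla\sigma|^2/\sigma$ when $w=\sqrt\sigma$, one has $J_\alpha(\sqrt\sigma)=\int\bigl(\tfrac18|\nabla\sigma|^2/\sigma+\tfrac\alpha2 g\sigma-F(\sqrt\sigma)\bigr)$, a convex functional of $\sigma$ on the convex set $\{0\le\sigma\le1,\ \sqrt\sigma\in\HH\}$: the first term is convex because $(\sigma,p)\mapsto|p|^2/\sigma$ is the perspective of $p\mapsto|p|^2$, the middle term is linear, and $\sigma\mapsto F(\sqrt\sigma)$ is concave on $[0,1]$ because its derivative $\tfrac12 f(\sqrt\sigma)/\sqrt\sigma$ is nonincreasing on $(0,1]$ by \eqref{HypfKPP}. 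The pointwise form I would actually use is the elementary identity
$$
|\nabla w|^2=\Bigl|\nabla w-\tfrac wV\nabla V\Bigr|^2+2\,\tfrac wV\,\nabla w\cdot\nabla V-\tfrac{w^2}{V^2}|\nabla V|^2,\qquad \nabla w-\tfrac wV\nabla V=V\,\nabla(w/V),
$$
together with the tangent inequality $F(w)\le F(V)+\tfrac{f(V)}{2V}(w^2-V^2)$.

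Now I would integrate the identity over $\R^{N-1}$ (all terms are finite since $w\in\HH$, the last one because $|\nabla V|^2/V^2$ is controlled by $g$), rewrite $\int\tfrac wV\nabla w\cdot\nabla V-\tfrac12\int\tfrac{w^2}{V^2}|\nabla V|^2=\int\nabla V\cdot\nabla\bigl(\tfrac{w^2}{2V}\bigr)$, integrate by parts and insert $\Lap V=\alpha gV-f(V)$ from \eqref{PA}. This yields $\tfrac12\int|\nabla w|^2=\tfrac12\int V^2|\nabla(w/V)|^2-\tfrac\alpha2\int g w^2+\tfrac12\int \tfrac{f(V)}{V}w^2$. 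Substituting into $J_\alpha(w)$ the $g$–terms cancel, and after applying the tangent inequality for $F$ (which cancels half of the $f(V)/V$–terms) we obtain
$$
J_\alpha(w)\ \ge\ \tfrac12\int_{\R^{N-1}} V^2\,|\nabla(w/V)|^2\ +\ \int_{\R^{N-1}}\Bigl(\tfrac12 V f(V)-F(V)\Bigr).
$$
Taking $w=V$ (both the square and the tangent inequality are then equalities) identifies the last integral as $J_\alpha(V)$; hence $J_\alpha(w)\ge J_\alpha(V)$ for all admissible $w$, and equality forces $\nabla(w/V)\equiv0$ and equality in the $F$–inequality, i.e.\ $w\equiv V$ (the constant factor is forced to $1$ by $f(s)<f'(0)s$). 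So $V$ is the unique global minimizer.

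It remains to prove $J_\alpha(V)<0=J_\alpha(0)$. Put $\psi(s)=\tfrac12 s f(s)-F(s)$, so $J_\alpha(V)=\int_{\R^{N-1}}\psi(V)$ by the previous line. Then $\psi(0)=0$ and $\psi'(s)=\tfrac12\bigl(sf'(s)-f(s)\bigr)=\tfrac{s^2}{2}\,(f(s)/s)'\le0$ on $(0,1]$ by \eqref{HypfKPP}, with strict inequality on a set of positive measure because $f(s)/s<f'(0)=\lim_{t\to0^+}f(t)/t$ on $(0,1)$; hence $\psi(s)<0$ for every $s\in(0,1]$, so $\psi(V(y))<0$ for all $y$, and $\psi(V)\in L^1$ since $\psi(s)=o(s^2)$ near $0$ and $V$ decays exponentially. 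Therefore $J_\alpha(V)=\int\psi(V)<0$. The main point requiring care is the rigorous justification of the integration by parts for competitors $w$ lying only in $\HH$: this is done with the exponential decay of Corollary~\ref{decry} and, if necessary, a cutoff $\beta_\epsi(w)$ and a limiting argument exactly as in the uniqueness proof above. (Alternatively one may obtain a minimizer $w_*$ of $J_\alpha$ over $\{0\le w\le1\}$ by the direct method, using the compact injection $\HH\hookrightarrow L^2(\R^{N-1})$, observe that $J_\alpha(\epsi\pa)=\tfrac{\epsi^2}{2}\la+o(\epsi^2)<0$ since $\la<0$, so $w_*\not\equiv0$, check that the obstacles $0$ and $1$ are respectively a sub- and a super-solution of \eqref{PA} so that $w_*$ solves \eqref{PA}, and conclude $w_*=V$ by the uniqueness already proved; the convexity argument has the advantage of bypassing this obstacle discussion.)
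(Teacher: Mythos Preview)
Your proof is correct and takes a genuinely different route from the paper's. The paper argues indirectly: it asserts that $J_\alpha$ attains its infimum (by the compact embedding $\HH\hookrightarrow L^2$), observes that the infimum cannot be $0$ because $\la<0$ makes $J_\alpha(\epsi\pa)<0$ for small $\epsi$, notes that the minimizer (after replacing it by its absolute value, which costs nothing once $f$ is extended oddly) is a positive solution of \eqref{PA}, and then invokes the uniqueness theorem already proved to identify it with $V$. In contrast, you bypass the existence of a minimizer entirely and prove the pointwise comparison $J_\alpha(w)\ge J_\alpha(V)$ directly via the Brezis--Oswald convexity in $\sigma=w^2$, together with the explicit identity $J_\alpha(V)=\int\bigl(\tfrac12 Vf(V)-F(V)\bigr)$. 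Your approach yields more: a quantitative gap $J_\alpha(w)-J_\alpha(V)\ge \tfrac12\int V^2|\nabla(w/V)|^2$, a closed formula for the minimum value, and an independent re-derivation of uniqueness. The paper's approach is shorter because it simply recycles the uniqueness result.

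One point deserves a word of caution. Your claim that ``$|\nabla V|^2/V^2$ is controlled by $g$'' is not available from the tools in the paper: Corollary~\ref{decry} gives exponential decay of $V$ and $\nabla V$ separately for any rate, but no control on their ratio, and in principle $V$ could decay faster than $|\nabla V|$. The cleanest fix is the one you already sketch: prove the inequality first for compactly supported $w$ (where every term is manifestly finite and the integration by parts against $\Delta V$ is unproblematic), and then pass to general $w\in\HH$, $0\le w\le 1$, by truncation and monotone convergence on the nonnegative term $\tfrac12\int V^2|\nabla(w/V)|^2$. With that approximation step made explicit, the argument is complete.
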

\begin{proof}
Owing to the maximum principle, solutions of \eqref{PA} are between 0 and
1. Hence we can modify $f$ on $]-\infty,0[$ such that it becomes odd
and as a consequence, $F$ can be considered as even.
Since $\la$ the principal eigenvalue of the linearized operator
about the zero solution is negative for $\alpha<\alpha_0$, 0 cannot be the global minimum of $J_\alpha$. Now $J_\alpha$ admits a global minimum that
will be called $\tV$ for the argument. One can prove that $|\tV|$ is
also a global minimum of $J_\alpha$ and hence $|\tV|$ is a positive
solution of \eqref{PA}. By uniqueness, $|\tV|=V$ and thus
$V$ is a global minimum of $J_\alpha$. Since $0$ is not a
global minimum, necessarily $J_\alpha (V) <0=J_\alpha(0)$.
\end{proof} 
We now conclude with the linearized stability of $V$.
\begin{theorem}
For $\alpha <\alpha_0$, consider the linearized operator about
$V$ and denote $\lambda_1[V]$ the principal
eigenvalue of this operator. Then $\lambda_1[V]> 0$. 
\end{theorem}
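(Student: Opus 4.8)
The plan is to prove that the principal eigenvalue $\lambda_1[V]$ of the operator $L_V\varphi = -\Delta\varphi + \big(\alpha g(y) - f'(V)\big)\varphi$, acting on the weighted space $\HH$, is strictly positive. As in the theorem on the principal eigenvalue of the linearization about $0$, the operator $L_V$ has a well-defined principal eigenvalue given by a Rayleigh quotient, because the term $\alpha g(y)$ (which dominates $f'(V)$, bounded, as $|y|\to\infty$) gives compactness of $\HH\hookrightarrow L^2(\R^{N-1})$; let $\varphi_1>0$ be the associated positive eigenfunction. The key point to exploit is the combination of two facts already established: (i) $V$ is the global minimum of the energy $J_\alpha$ (Theorem~\ref{energy}), which gives $\lambda_1[V]\ge 0$ by the second variation, and (ii) the KPP structure of $f$, which will upgrade this to strict positivity.

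First I would record the nonnegativity: for any $w\in\HH$, the second variation of $J_\alpha$ at $V$ in the direction $w$ is, up to a positive constant, $\int |\nabla w|^2 + \big(\alpha g - f'(V)\big)w^2 \,dy$, which is exactly the quadratic form whose infimum over $\|w\|_{L^2}=1$ equals $\lambda_1[V]$. Since $V$ is a global (hence local) minimum of $J_\alpha$, this quantity is $\ge 0$, so $\lambda_1[V]\ge 0$. It remains to rule out $\lambda_1[V]=0$. The standard device here is to compare $\varphi_1$ with $V$ itself. Since $V$ solves $\Delta V + f(V) - \alpha g V = 0$, we have $L_V$ applied to $V$ equal to $-\Delta V + (\alpha g - f'(V))V = f(V) - f'(V)V$. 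Testing the eigenvalue equation $L_V\varphi_1 = \lambda_1[V]\varphi_1$ against $V$ and, conversely, the expression for $L_V V$ against $\varphi_1$, and using Stokes' formula — justified because $V$, $\nabla V$ both decay exponentially by Corollary~\ref{decry}, and $\varphi_1$, $\nabla\varphi_1$ decay by the same Agmon/$L^p$-estimate argument applied to the eigenfunction — yields
$$
\lambda_1[V]\int \varphi_1 V \,dy \;=\; \int \big(f(V) - f'(V)V\big)\varphi_1\,dy \;=\; \int\Big(\frac{f(V)}{V} - f'(V)\Big)V\varphi_1\,dy.
$$
By the KPP hypothesis~\eqref{HypfKPP}, $s\mapsto f(s)/s$ is (strictly) decreasing on $(0,1]$, so $\frac{f(V)}{V} - f'(V) > 0$ wherever $0<V<1$ (strictly, since differentiating the strict monotonicity gives $f'(s)<f(s)/s$; here $0<V<1$ by the strong maximum principle since $V\not\equiv 0,1$). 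As $\varphi_1>0$ and $V>0$, the right-hand integral is strictly positive, hence $\lambda_1[V]>0$.

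The step I expect to be the main obstacle is the rigorous justification of the integration by parts and the interchange of $L_V V$ with $L_V\varphi_1$ under the integral sign, i.e.\ showing $\int \Delta V\,\varphi_1 = \int V\,\Delta\varphi_1$ in this unbounded, weighted setting. One must confirm that $\varphi_1$ and $\nabla\varphi_1$ decay fast enough (exponentially, or at least faster than $V^{-1}$ decays, which suffices since $V$ decays exponentially); this follows because $\varphi_1$ satisfies an elliptic equation with potential $\alpha g - f'(V) - \lambda_1[V]$ tending to $+\infty$, so Theorem~\ref{BR} applies to $\varphi_1$ just as it did to $V$ in Corollary~\ref{decry}, and global $L^p$ estimates then control $\nabla\varphi_1$. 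A minor secondary point is establishing that the Rayleigh-quotient infimum defining $\lambda_1[V]$ is attained with a strictly positive eigenfunction — but this is verbatim the argument already invoked for $\lambda_\alpha$, using the same compact embedding, so I would simply cite it. With these decay facts in hand, the boundary terms at infinity vanish and the computation above is legitimate, completing the proof.
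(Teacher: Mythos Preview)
Your proof is correct and takes a genuinely different, more direct route than the paper. The paper argues by contradiction in two separate cases: if $\lambda_1[V]<0$ it builds the sub-solution $V+\epsi\psi$ and invokes the \emph{uniqueness} of the positive profile (proved earlier by an independent argument) to get a contradiction; if $\lambda_1[V]=0$ it pairs the eigenfunction $\psi$ of $L_V$ against the eigenfunction $\pa$ of the linearization about~$0$ and derives an integral identity that clashes with $\la<0$. Your approach sidesteps both the uniqueness result and the auxiliary eigenfunction $\pa$: testing the eigenvalue equation directly against $V$ itself yields $\lambda_1[V]\int\varphi_1 V=\int(f(V)-f'(V)V)\varphi_1$ in one stroke, and the KPP concavity makes the right-hand side strictly positive. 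In particular your first step (the second-variation argument giving $\lambda_1[V]\ge 0$) is redundant, since the identity already delivers strict positivity outright. One small point to tighten: strict monotonicity of $s\mapsto f(s)/s$ does not literally force $f'(s)<f(s)/s$ at every point (a strictly decreasing $\CC^1$ function may have isolated critical points), but your integral conclusion survives, because if the integrand vanished a.e.\ then $f(s)/s$ would be constant on the nondegenerate interval $(0,\max V]$, contradicting the hypothesis. Your identification of the integration-by-parts justification as the main technical issue, and its resolution via Theorem~\ref{BR} applied to $\varphi_1$, is exactly right.
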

\begin{proof}
Denote by $\psi$ a positive eigenfunction associated with
$\lambda_1[V]$ and assume by contradiction that
$\lambda_1[V]\leq 0$. If $\lambda_1[V]<0$, it is easy to see that for
$\ep>0$ small enough $V +\ep \psi<1$ is a
sub-solution of \eqref{PA}. From there, it would follow that there exists a solution
of \eqref{PA} between $V +\ep \psi$ and $1$ but this
contradicts the uniqueness of $V$.

Now if $\lambda_1[V]=0$,letting $\psi$ be as above, we get 
\begin{equation}
\label{psi}
-\Delta \psi +\alpha g(y) \psi-f'(V)\psi=0.
\end{equation}  
From the equation and since $V$ is unique for every  given $0<\alpha<\alpha_0$, it is clear that $V$ is differentiable with respect to $\alpha$ and that $w:= \frac{\partial V}{\partial \alpha}$ satisfies:
\begin{equation}
\label{ww}
-\Delta w +\alpha g(y) w-f'(V)w=-g(y)V.
\end{equation}  
We know that $w\leq 0$ and from \eqref{ww} which shows that $w\not\equiv 0$, we actually see from the maximum principle that $w<0$ in $\R^{N-1}$.
It is also easily seen that $w$ has exponential decay at infinity. 
From \eqref{psi} and \eqref{ww}, it then follows that $\int_{R^{N-1}} gVw=0$ which is a contradiction. Hence $\lambda_1[V]>0$. 

\end{proof}


\section{Traveling fronts for a Fisher-KPP non-linearity}
\label{section: cetoile}

This section is devoted to the definition of a speed $\cu$ for which a
traveling front of equation \eqref{EP} exists for $\alpha\in (0,\alpha_0)$. The threshold
of existence of the non-zero asymptotic profile is called
$\alpha_0$ as in the previous section. For $0<\alpha <\alpha_0$,
$V$ denotes the unique non-zero asymptotic
profile. As shown in the previous section, the
energy of the non-zero profile $J_\alpha(V)$ is
negative.

A curved traveling front of speed $c$ is a function
$u(x_1-ct,y)$ solution of 
equation \eqref{EP} and connecting the non-zero asymptotic state $V$
to 0. Thus we are looking for a solution of 
\begin{equation}
\begin{cases}
-\Delta u - c\partial_1 u + \alpha g(y) u=f(u), \quad  x=(x_1,y)\in
\Rn \\
u(x_1,.)\xrightarrow{x_1\rightarrow -\infty} V, \quad
u(x_1,.)\xrightarrow{x_1\rightarrow +\infty} 0 \; \text{uniformly in }
y\in \R^{N-1},\\
u\geq 0, \quad u \text{ bounded}
\end{cases}
\label{FPc}
\end{equation} 
where $c\in \R$ is also an unknown of the problem.

The construction of $\cu$ in Theorem \ref{existFP} uses the sliding method following ideas of  \cite{BN}. Note however that there are important differences with \cite{BN}. In that paper, the Fisher-KPP case is derived by first solving the ``combustion non-linearity'' and then approach the Fisher-KPP non-linearity as a limiting case of truncated functions. Contrary to \cite{BN} here, we derive directly the existence of a solution of the Fisher-KPP case. Actually the method we present here can be applied to somewhat simplify the proof of \cite{BN} in the Fisher-KPP case for cylinder with Neumann conditions. 
  
\subsection{Problem on a domain bounded in $x_1$.}
Let us fix $a>1$ and $c\in \R$ for this subsection and consider the
following problem: 
\begin{equation}
\begin{cases}
-\Delta u - c\partial_1 u + \alpha g(y) u=f(u), \quad  x=(x_1,y)\in
(-a,a)\times \R^{N-1} \\
u(-a,\cdot)=V, \quad u(a,\cdot)= 0, \\
 u \geq 0,  \quad u \text{ bounded}.
\end{cases}
\label{FPB}
\end{equation} 
The aim of this subsection is to prove the following theorem:
\begin{theorem}
\label{boite}
There exists a unique solution of (\ref{FPB}), denoted $u_a^c$ in
the following. This solution decreases in the $x_1$-direction,
i.e. $\partial_1 u_a^c<0$. Thus $0 <u_a^c<V$ on $(-a,a)\times
\R^{N-1}$. Moreover $c \mapsto u_{a}^c$ is decreasing and 
continuous from $\R$ to $L^\infty([-a,a]\times
\R^{N-1})$. 
\end{theorem}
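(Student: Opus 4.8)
The plan is the classical one for KPP-type semilinear elliptic boundary value problems, adapted to the fact that the cross section $\R^{N-1}$ is unbounded: build a solution squeezed between the natural sub- and super-solutions $0$ and $V$, obtain monotonicity in $x_1$ by the sliding method, deduce uniqueness from a scaling comparison using the KPP structure of $f$, and read off monotonicity and continuity in $c$ from elementary comparison and compactness. For existence, since $\partial_1 V\equiv 0$ the function $V=V(y)$ solves the equation in \eqref{FPB} identically, so (being nonnegative) it is a super-solution of \eqref{FPB}, while $0$ is a sub-solution and $0\le V$. Because the problem is posed on $(-a,a)\times\R^{N-1}$, I would first solve \eqref{FPB} on the truncated cylinders $(-a,a)\times B_R$ with the additional condition $u=0$ on $(-a,a)\times\partial B_R$ — still compatible with the pair $(0,V)$ — obtaining solutions $u_R$ with $0\le u_R\le V$; comparison gives $u_R\le u_{R'}$ for $R<R'$, so $u_R\nearrow u$ pointwise, and elliptic estimates (interior and up to $\{x_1=\pm a\}$) make this a $C^2_{\mathrm{loc}}$ limit, so $u$ solves the equation, inherits the boundary values at $x_1=\pm a$, and, since $0\le u\le V$, inherits from Corollary~\ref{decry} the uniform decay as $|y|\to\infty$. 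The strong maximum principle applied to the linear equation satisfied by $u$ (bounded zeroth order coefficient $\alpha g-f(u)/u$) then gives $u>0$ in the interior, as $u(-a,\cdot)=V\not\equiv 0$.

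Comparison of $u$ with the super-solution $V$ gives $u\le V$ throughout. For $\tau\in(0,2a)$ put $u^\tau(x_1,y)=u(x_1+\tau,y)$ on $[-a,a-\tau]\times\R^{N-1}$; on the lateral boundary of this strip $u^\tau\le u$, since at $x_1=-a$ one has $u^\tau(-a,\cdot)=u(-a+\tau,\cdot)\le V=u(-a,\cdot)$ and at $x_1=a-\tau$ one has $u^\tau(a-\tau,\cdot)=u(a,\cdot)=0\le u(a-\tau,\cdot)$. I would then run the sliding method: the set of $\tau\in(0,2a)$ for which $u^\tau\le u$ on the whole strip is closed, open (by the strong maximum principle and Hopf's lemma for the linear equation satisfied by $u-u^\tau$), and nonempty (for $\tau$ near $2a$ the strip collapses onto $\{x_1=-a\}$, where $u\approx V$ while $u^\tau\approx u(a,\cdot)=0$), hence it is all of $(0,2a)$; letting $\tau\to 0^+$ gives $\partial_1 u\le 0$, and since $\partial_1 u$ solves the linearized equation the strong maximum principle upgrades this to $\partial_1 u<0$ in the interior, the alternative that $u$ be independent of $x_1$ being ruled out by the boundary data. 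Consequently $0=u(a,\cdot)<u(x_1,\cdot)<u(-a,\cdot)=V$ for $x_1\in(-a,a)$.

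For uniqueness, let $u_1,u_2$ be two solutions; by the above $0<u_i<V\le1$ and $\partial_1 u_i<0$. Set $\sigma^*=\inf\{\sigma\ge1:\ \sigma u_1\ge u_2\ \text{on}\ (-a,a)\times\R^{N-1}\}$, which is finite because the ratio $u_2/u_1$ stays bounded near $\{x_1=\pm a\}$ (Hopf's lemma) and as $|y|\to\infty$ (Harnack's inequality together with the exponential decay of Corollary~\ref{decry}). If $\sigma^*>1$, then $\sigma^*u_1$ is a super-solution of \eqref{FPB}: the KPP assumption~\eqref{HypfKPP} (and $f\le0$ on $[1,\infty)$) yields $\sigma^*f(u_1)\ge f(\sigma^*u_1)$, while the boundary inequalities $\sigma^*V\ge V$ at $x_1=-a$ and $0=0$ at $x_1=a$ hold. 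Applying the strong maximum principle to $w=\sigma^*u_1-u_2\ge0$ gives either $w\equiv0$, which forces $\sigma^*=1$ on $\{x_1=-a\}$ and contradicts $\sigma^*>1$, or $w>0$ in the interior, in which case $\sigma^*$ could be decreased slightly while preserving $\sigma u_1\ge u_2$, contradicting minimality. Hence $\sigma^*=1$, i.e. $u_1\ge u_2$, and by symmetry $u_1=u_2$.

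Finally, if $c_1<c_2$ then, using $\partial_1 u_a^{c_1}<0$, one computes $-\Lap u_a^{c_1}-c_2\partial_1 u_a^{c_1}+\alpha g\,u_a^{c_1}-f(u_a^{c_1})=(c_1-c_2)\partial_1 u_a^{c_1}>0$, so $u_a^{c_1}$ is a super-solution of the $c_2$-problem lying above the sub-solution $0$; by the uniqueness just proved the solution trapped between them is $u_a^{c_2}$, so $u_a^{c_2}\le u_a^{c_1}$, and the strong maximum principle makes this strict, giving the decrease in $c$. For continuity, if $c_n\to c$ then all $u_a^{c_n}$ lie in $[0,V]$ and solve the equation with converging coefficients, so by elliptic estimates a subsequence converges in $C^2_{\mathrm{loc}}$ to a solution of the $c$-problem in $[0,V]$, which must be $u_a^c$ by uniqueness; since every subsequence has such a sub-subsequence, the whole family converges in $C^2_{\mathrm{loc}}$, and the uniform bound $u_a^{c_n}\le V$ together with Corollary~\ref{decry} (so that $V$, hence every $u_a^{c_n}$, is small outside a large ball, uniformly) promotes this to convergence in $L^\infty([-a,a]\times\R^{N-1})$. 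The part I expect to require genuine care is the ``opening of the strict inequality'' in the uniqueness step, together with the analogous nonemptiness and closedness steps in the sliding argument: because the cross section is unbounded and every competitor tends to $0$ as $|y|\to\infty$, one cannot merely invoke compactness, and the needed control of ratios of solutions near $\{x_1=\pm a\}$ and at spatial infinity — through Hopf's lemma, Harnack's inequality and the exponential decay of Corollary~\ref{decry} — is where the real work lies.
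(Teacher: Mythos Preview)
Your plan is sound in outline, but the technical device you are missing---and which the paper supplies---is an exterior comparison principle (Proposition~\ref{comparison} in the paper): choose $R$ so that $\alpha g(y)>K:=\mathrm{Lip}\,f$ for all $|y|>R$; then on any domain $I\times(\R^{N-1}\setminus\overline{B_R})$, two solutions with $u\le v$ on the boundary satisfy $u\le v$ throughout, because at a putative negative interior minimum of $v-u$ the absorption $\alpha g$ dominates the Lipschitz constant of $f$ and gives a contradiction. This reduces every comparison to the compact set $\overline{I}\times\overline{B_R}$, where ordinary continuity and the strong maximum principle apply. The tools you propose for the unbounded cross-section do not close the gap: Harnack's inequality does not control the ratio $u_2/u_1$ as $|y|\to\infty$ (the Harnack constant on a unit ball centered at $y$ depends on $\sup g$ over that ball, which diverges), and separate exponential decay of each $u_i$ says nothing about their quotient. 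Both the nonemptiness/openness in your sliding step and the finiteness-and-decrease of $\sigma^*$ in your scaling step need precisely this exterior comparison; with it in hand, they go through cleanly.

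On uniqueness, the paper takes a different and more economical route: rather than a separate scaling argument, it proves a single sliding lemma for \emph{two} solutions $u,v$, namely $v(x_1+h,y)\le u(x_1,y)$ for all $h\in[0,2a)$, using Proposition~\ref{comparison} to reduce each step to the compact cylinder $\overline{I_h}\times\overline{B_R}$. Setting $h=0$ gives $v\le u$ and hence uniqueness; setting $u=v$ gives monotonicity. This avoids entirely the need to show that $u_2/u_1$ is globally bounded (so that $\sigma^*<\infty$) and that strict positivity of $\sigma^*u_1-u_2$ in the interior allows $\sigma^*$ to be decreased---both of which are genuine issues on the unbounded cross-section and are not settled by the Hopf/Harnack/decay package you invoke.
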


To prove this theorem, we require the following two
propositions.
\begin{proposition}
\label{majoration}
Let $u$ be a solution of \eqref{FPB}. Then $u(x_1,y) \leq V(y)$ for
$(x_1,y)\in  [-a,a]\times \R^{N-1}$.
\end{proposition}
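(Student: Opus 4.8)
The plan is to use a comparison argument: $V$ (extended trivially in the $x_1$-variable) is a supersolution of the equation in \eqref{FPB}, so the sliding/maximum-principle machinery should force $u \le V$. First I would observe that $V = V(y)$, viewed as a function on $[-a,a]\times\R^{N-1}$ that is constant in $x_1$, satisfies $-\Lap V - c\partial_1 V + \alpha g(y)V = -\Lap_y V + \alpha g(y)V = f(V)$ by \eqref{PA}; hence $V$ is an exact (stationary) solution, in particular a supersolution, of the elliptic operator in \eqref{FPB}. On the boundary we have $u(-a,\cdot) = V$ and $u(a,\cdot) = 0 \le V$, since $V \ge 0$.

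Next I would set $w = u - V$ and aim to show $w \le 0$ on $[-a,a]\times\R^{N-1}$. Subtracting the equations, $w$ satisfies $-\Lap w - c\partial_1 w + \alpha g(y)w = f(u) - f(V) = q(x)\,w$ where $q(x) = \int_0^1 f'(V + t(u-V))\,dt$ is a bounded function (both $u$ and $V$ are bounded, $f \in \CC^1$). So $w$ solves a linear equation $-\Lap w - c\partial_1 w + (\alpha g(y) - q(x))\,w = 0$ with $w \le 0$ on $\{x_1 = \pm a\}$. The coefficient $\alpha g(y) - q(x)$ is not necessarily nonnegative, so I cannot invoke the maximum principle in its crudest form directly; the standard fix is to note that the domain is bounded in the $x_1$-direction, so after the substitution $w = e^{\mu x_1}\tilde w$ for a suitable constant $\mu$ the zeroth-order coefficient can be made to have a sign, OR — more robustly — to use that $w$ is bounded and $g(y)\to+\infty$, so $w$ must attain a nonnegative maximum over the closure (if $\sup w > 0$), and that maximum, by the decay of $V$ and boundedness of $u$, is attained at a finite point; I then apply the strong maximum principle on a large ball.

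More carefully, I expect the cleanest route is: suppose $M := \sup_{[-a,a]\times\R^{N-1}} w > 0$. Since $u$ is bounded and $V(y)\to 0$ as $|y|\to\infty$ (and more precisely $V$ is bounded), $w$ is bounded; since $g(y)\to+\infty$ and $u\ge 0$ is bounded, for $|y|$ large the equation gives strong coercivity that prevents $w$ from approaching its positive supremum there — concretely, at a near-maximum point $x^n$ one has $-\Lap w(x^n) - c\partial_1 w(x^n)$ nearly $\le 0$ by an Ekeland/touching-paraboloid argument, forcing $(\alpha g(y^n) - q(x^n))w(x^n) \lesssim 0$, which for $|y^n|$ large (so $\alpha g(y^n) - q > 0$) contradicts $w(x^n) \to M > 0$. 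Hence the supremum is attained at a finite interior point (it is not attained on the boundary, where $w \le 0 < M$), and there $-\Lap w \ge 0$, $\partial_1 w = 0$, giving $(\alpha g(y_0) - q(x_0))M \le 0$; if $\alpha g(y_0) - q(x_0) > 0$ this is an immediate contradiction, and if not, a localized strong maximum principle on a ball around $x_0$ (where the zeroth-order term can be absorbed by shifting, since the ball is bounded) yields $w \equiv M$ on that ball, then by connectedness $w \equiv M$ up to the boundary, contradicting $w = 0$ there.

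The main obstacle is precisely the fact that the zeroth-order coefficient $\alpha g(y) - q(x)$ may be negative on a bounded region (where $g$ is small and $f'$ large), so one cannot apply the weak maximum principle verbatim; I would handle this either by the standard trick of working on a bounded cross-section $B_R$ with Dirichlet data and letting $R\to\infty$ (using $w$'s decay, which follows from Corollary \ref{decry} applied to $V$ together with boundedness of $u$ — or simply boundedness plus $g\to\infty$ to localize), or by exploiting the boundedness of the domain in $x_1$ to gauge away the sign problem. I do not expect either technicality to be deep, but it is the one genuine point requiring care.
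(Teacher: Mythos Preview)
Your approach is different from the paper's and, as written, has a gap. The paper does not run a maximum principle on $w=u-V$. Instead it introduces, for each $R>0$, the \emph{maximal} solution $\psi_R$ of $-\Delta_y\psi+\alpha g\psi=f(\psi)$ in $B_R$ with boundary datum $\psi_R|_{\partial B_R}=M$, where $M\ge1$ is any upper bound for $u$. One checks that $V\le\psi_R$, that $R\mapsto\psi_R$ is decreasing, and hence (by uniqueness of the positive profile) that $\psi_R\to V$ as $R\to\infty$. Then on the \emph{bounded} box $(-a,a)\times B_R$ one compares $u$ with the solution of the full problem having lateral datum $M$ on $(-a,a)\times\partial B_R$ and end data $\psi_R$ at $x_1=\pm a$; the key observation is that monotone iterations started from the constant super-solution $M$ remain independent of $x_1$, hence converge to $\psi_R(y)$ itself, yielding $u\le\psi_R$. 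Sending $R\to\infty$ gives $u\le V$. This argument never requires knowing in advance that $u(x_1,y)\to0$ as $|y|\to\infty$.

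Your route, by contrast, needs exactly that decay, and this is where the gap lies. To apply any maximum-principle argument to $w=u-V$ over $[-a,a]\times\R^{N-1}$ you must exclude $\sup w>0$ being approached as $|y|\to\infty$; since $V\to0$, this amounts to showing $u\to0$ uniformly in $x_1$, which is not assumed (only boundedness is) and is essentially the content of the proposition. Neither of your proposed fixes closes this. The gauge $w=e^{\mu x_1}\tilde w$ gives zeroth-order coefficient $\alpha g - q - \mu^2 - c\mu$; since $\alpha g(0)=0$ while $q$ can be as large as $\sup f'$, this cannot be made nonnegative for general $a$ and $c$. Restricting to $(-a,a)\times B_R$ would require $u\le V$ on $|y|=R$, which is circular. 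The Ekeland sketch is also delicate: as $|y^n|\to\infty$ the coefficient $\alpha g(y^n)$ blows up, so uniform $C^2$ control on $u$ near the sequence is lost and ``$-\Delta w - c\partial_1 w\lesssim0$'' at near-maxima is not justified as stated. A penalization such as $w-\varepsilon|y|$ can indeed be pushed through, but this, together with the subsequent strong-maximum-principle step in the presence of an indefinite zeroth-order term, is genuine work that your outline does not supply. The paper's monotone-iteration-from-$M$ argument sidesteps all of these issues by reducing directly to the uniqueness of the profile $V$.
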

\begin{proof}
Let $M\geq 1$ be such that $u\leq M$ and consider $\psi_R$ defined on
$B_R$ the largest solution of
\begin{equation}
\label{psiR}
\begin{cases}
-\Delta_y \psi_R +\alpha g(y)\psi_R=f(\psi_R) \quad \text{for } y\in B_R,\\
\psi_R=M \quad \text{for }y \in \partial B_R, \qquad 0 \leq \psi_R
\leq M.
\end{cases}
\end{equation}
Here we think of $f$ as having been extended by $0$ outside $[0,1]$. Since $f(s)\leq 0$ for all $s\geq 1$, we observe that:
\begin{itemize}
\item by the strong maximum principle, $0<\psi_R<M$ on $B_R$.
\item since $V \leq 1 \leq M$ and $V$ is a sub-solution of
  \eqref{psiR}, through monotone iterations we have $V\leq  \psi_R$. 
\item if $R'>R$, $\psi_{R'}$ is once again a sub-solution of
  \eqref{psiR} on $B_R$ and thus $\psi_{R'}\leq \psi_R$ on $B_R$.
\item therefore $\psi_R$ tends to a function when
  $R\rightarrow +\infty$ and through local elliptic estimates, this
  function is a non-zero solution ($\geq V$) of the asymptotic problem
  \eqref{PA}. By uniqueness, we obtain $\psi_R
  \xrightarrow{R \rightarrow +\infty} V$   
\end{itemize}
Now we consider the problem
\begin{equation}
\label{recw}
\begin{cases}
-\Delta w -c\partial_1 w+\alpha
g(y)w=f(w) \quad \text{ for } x \in (-a,a)\times B_R,\\
w=M \quad \text{for }x\in (-a,a)\times \partial B_R, \qquad w=\psi_R \quad
\text{for } x_1=\pm a, y\in B_R.
\end{cases}
\end{equation}
The solution $u$ of \eqref{FPB} is a sub-solution of \eqref{recw} and
the constant function $M$ is a super-solution. Using monotone iterations
starting from the super-solution $M$, we build the same sequence as
previously (for problem \eqref{psiR}) since by induction the solutions
do not depend on $x_1\in (-a,a)$. Hence the sequence converges toward
$\psi_R$ and we have $u\leq \psi_R \leq M$. Now letting $R \rightarrow +\infty$
yields $u\leq V$.
\end{proof}
\begin{proposition}
\label{comparison}
Let $R$ be such that $g(y) >\frac{K}{\alpha}$ for $y \not\in \overline{B_R}$ where $K$ is the
Lipschitz norm of $f$ on $[0,1]$. We set $\Omega=I\times
(\R^{N-1}\setminus \overline{B_R})$ where $I$ is an open bounded interval
of $\R$.  

Suppose $u$ and $v\in \CC^2(\Omega)\cap \CC^0(\overline{\Omega})$ are
solutions of 
\begin{equation}
\label{jeq}
-\Delta w - c\partial_1 w + \alpha g(y) w=f(w) \quad \text{on } \Omega
\end{equation}  
and $u\leq v$ on $\partial \Omega$. Then $u\leq v$ on $\Omega$.  
\end{proposition}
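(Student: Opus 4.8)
The plan is to pass to the linear equation satisfied by $z:=u-v$ and then run a maximum‑principle argument on $\Omega$; the only real difficulty is that $\Omega$ is unbounded in the $y$‑directions, and this will be handled by a barrier built from the hypothesis that $g\to+\infty$. (Observe that the other hypothesis, $g>K/\alpha$ outside $\overline{B_R}$, is exactly what makes the zeroth‑order coefficient of the linearized operator nonnegative.) Subtracting the two instances of \eqref{jeq} and writing $f(u)-f(v)=c(x)\,z$ with $c(x)=\int_0^1 f'\bigl(v+t(u-v)\bigr)\,dt$, so that $|c(x)|\le K$, one obtains
\[
-\Lap z-c\,\partial_1 z+d(x)\,z=0\quad\text{in }\Omega,\qquad d(x):=\alpha g(y)-c(x).
\]
Since $g(y)>K/\alpha$ for $y\notin\overline{B_R}$ we have $d(x)\ge \alpha g(y)-K\ge 0$ on $\overline\Omega$; moreover $z\le 0$ on $\partial\Omega$ and $z$ is bounded (we use here that $u$ and $v$ are bounded, as they are in all the applications).

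Next I would construct the barrier. Set $n:=N-1$, let $\tilde g(r):=\min_{|y|=r}g(y)$, a continuous function with $\tilde g(r)>K/\alpha$ for $r>R$, $\tilde g\ge K/\alpha$ on $[R,+\infty)$, and $\tilde g(r)\to+\infty$, and let $\phi$ be the solution on $[R,+\infty)$ of the linear ODE $-\phi''-\tfrac{n-1}{r}\phi'+\bigl(\alpha\tilde g(r)-K\bigr)\phi=0$ with $\phi(R)=1$, $\phi'(R)=0$. From the identity $\bigl(r^{n-1}\phi'\bigr)'=r^{n-1}\bigl(\alpha\tilde g(r)-K\bigr)\phi$ one checks that $\phi\ge 1$ and $\phi'\ge 0$ throughout, and, integrating this identity twice and using $\alpha\tilde g(r)-K\to+\infty$, that $\phi(r)\to+\infty$. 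Then $\Psi(y):=\phi(|y|)$ is independent of $x_1$, satisfies $\Psi\ge 1$ and $\Psi(y)\to+\infty$ as $|y|\to\infty$, and, since $d(x)\ge\alpha g(y)-K\ge\alpha\tilde g(|y|)-K$ and $\Psi\ge 0$,
\[
-\Lap\Psi-c\,\partial_1\Psi+d(x)\,\Psi=-\Delta_y\Psi+d(x)\,\Psi\ge -\Delta_y\Psi+\bigl(\alpha\tilde g(|y|)-K\bigr)\Psi=0 .
\]
So $\Psi$ is a positive supersolution of the linearized operator which blows up at infinity.

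I would then conclude by exhaustion. Fix $\ep>0$; for $\rho>R$ let $\Omega_\rho:=I\times(B_\rho\setminus\overline{B_R})$, a bounded domain. On $\Omega_\rho$ the function $z-\ep\Psi$ satisfies $-\Lap(z-\ep\Psi)-c\,\partial_1(z-\ep\Psi)+d(x)\,(z-\ep\Psi)\le 0$ with $d\ge 0$; on $\partial\Omega_\rho\cap\partial\Omega$ we have $z-\ep\Psi\le z\le 0$, while on the remaining part of $\partial\Omega_\rho$, which lies in $\{|y|=\rho\}$, we have $z-\ep\Psi\le\sup_\Omega z-\ep\,\phi(\rho)<0$ once $\rho$ is large enough (depending on $\ep$). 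The classical weak maximum principle on the bounded domain $\Omega_\rho$ gives $z\le\ep\Psi$ on $\Omega_\rho$; letting $\rho\to+\infty$, so that any fixed point of $\Omega$ eventually lies in $\Omega_\rho$, and then $\ep\to 0$, we conclude $z\le 0$ on $\Omega$, i.e.\ $u\le v$.

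The crux is the barrier: unboundedness of $\Omega$ forces one to dominate the (a priori merely bounded) function $z$ at infinity by a supersolution tending to $+\infty$, yet near $\partial B_R$ the coefficient $d(x)$ may be arbitrarily small, so the barrier has to be essentially superharmonic there; solving the ODE exactly, as $\phi$ does, reconciles these two requirements, and this is possible precisely because $g\to+\infty$ (for $N\ge 4$ a radial superharmonic function on the exterior of a ball is automatically bounded, so the growth of $g$, not merely its positivity, is genuinely used). The remaining steps are routine applications of the maximum principle.
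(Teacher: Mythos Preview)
Your argument is correct and self-contained, but it is considerably more elaborate than what the paper does. The paper argues by contradiction in two lines: invoking Corollary~\ref{decry} and Proposition~\ref{majoration} (i.e.\ the bound $0\le u,v\le V$ together with the exponential decay of $V$), it observes that $u(x_1,y)$ and $v(x_1,y)$ tend to $0$ uniformly as $|y|\to\infty$; hence if $v-u$ were somewhere negative, its minimum over $\overline\Omega$ would be attained at an interior point $(x_0,y_0)$, and evaluating the difference of the equations there gives $\alpha g(y_0)(v-u)(x_0,y_0)\ge f(v)-f(u)\ge -K|(v-u)(x_0,y_0)|$, impossible since $\alpha g(y_0)>K$. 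In other words, the paper uses the \emph{decay} of $u$ and $v$ (imported from the surrounding context) to guarantee an interior minimum, so no barrier is needed at all. Your Phragm\'en--Lindel\"of construction trades that contextual input for a purely intrinsic argument requiring only boundedness of $u$ and $v$; it is more robust (and would work e.g.\ for solutions not trapped below $V$), but the price is the explicit ODE barrier and the exhaustion step. Both proofs implicitly go beyond the bare hypotheses of the proposition (the paper needs decay, you need boundedness and $u,v\in[0,1]$ for the Lipschitz bound), so your parenthetical remark about ``all the applications'' is in the same spirit as the paper's own use of Corollary~\ref{decry} and Proposition~\ref{majoration}.
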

\begin{proof}
By contradiction, suppose this is not true. Due to corollary
\ref{decry} and proposition \ref{majoration}, $u(x_1,y)$ and
$v(x_1,y)$ converge uniformly to 0 for $|y|\rightarrow +\infty$. Consequently, there
exist $(x_0,y_0)\in \Omega$ such that
$$
0>\min_{\overline{\Omega}} (v-u) =(v-u)(x_0,y_0). 
$$
Since $(x_0,y_0)\in \Omega$, we have $\partial_1 (v-u)(x_0,y_0)=0$
and $\Delta (v-u)(x_0,y_0)\geq 0$, and subtracting the equation \eqref{jeq} with $u$
from the one with $v$, we obtain  
$$
\alpha g(|y_0|)(v-u)(x_0,y_0) \geq f(v(x_0,y_0))-f(u(x_0,y_0)) \geq -K
|(v-u)(x_0,y_0)| 
$$
which is impossible since $\alpha g(|y_0|) >K$ and $(v-u)(x_0,y_0)<0$.   
\end{proof}

Let us now turn to the proof of Theorem \ref{boite} using
sliding method.
\medskip\\
First $\us(x,y)=V(y)$ is a super-solution, 0 is a
sub-solution and $0\leq \us$, so by monotone iterations, there
exists a solution $u$ of \eqref{FPB}. 

\begin{lemma}
Assume $u$ and $v$ are two
solutions of (\ref{FPB}). Then
$$
v(x_1+h,y)\leq u(x_1,y) \text{ for all } h \in [0,2a) \text{ and all } (x_1,y)\in [-a,a-h]\times \R^{N-1}.
$$
\end{lemma}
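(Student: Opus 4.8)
The plan is to prove this sliding-method comparison lemma by starting the sliding from a position where the inequality holds trivially and then sliding back as far as possible. First I would note that for $h$ close to $2a$, the inequality $v(x_1+h,y)\le u(x_1,y)$ holds on $[-a,a-h]\times\R^{N-1}$ essentially for free: when $x_1+h$ is near $a$ we have $v(x_1+h,\cdot)$ near the boundary value $0$, while $u(x_1,\cdot)\ge 0$; more precisely, by continuity up to $\{x_1=a\}$ and the fact that $v(a,\cdot)=0$, for $h$ sufficiently close to $2a$ one has $v(x_1+h,y)\le u(x_1,y)$ on the (small) slab $[-a,a-h]$. Then define
$$
h^* = \inf\{\, h\in[0,2a) : v(\cdot+h',\cdot)\le u \text{ on } [-a,a-h']\times\R^{N-1}\text{ for all }h'\in[h,2a)\,\},
$$
and the goal is to show $h^*=0$.

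Next I would argue by contradiction that $h^*>0$. By continuity, $v(x_1+h^*,y)\le u(x_1,y)$ holds on the closed slab $[-a,a-h^*]\times\R^{N-1}$. The key point is to upgrade this to a strict inequality in the interior and then use openness to slide a bit further, contradicting minimality of $h^*$. To get strictness: on the slab $[-a,a-h^*]\times\R^{N-1}$ set $w = u(x_1,y) - v(x_1+h^*,y)\ge 0$. Both $u$ and the shifted function $v(\cdot+h^*,\cdot)$ solve the same equation $-\Lap\, \cdot\, -c\partial_1\, \cdot\, +\alpha g(y)\,\cdot = f(\cdot)$, so $w$ satisfies a linear elliptic inequality of the form $-\Lap w - c\partial_1 w + (\alpha g(y) - \zeta(x,y))w = 0$ with $\zeta$ bounded (a Lipschitz-quotient of $f$), and $w\ge 0$. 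On the boundary of the slab: at $x_1=-a$, $w(-a,y)=V(y)-v(-a+h^*,y)$; since by Proposition~\ref{majoration} $v\le V$ and $v$ is decreasing in $x_1$ by Theorem~\ref{boite}, and $h^*>0$, we get $v(-a+h^*,y)<v(-a,y)=V(y)$ for each $y$ where $V(y)>0$, hence $w(-a,\cdot)\ge 0$ and is not identically zero; at $x_1=a-h^*$, $w(a-h^*,y)=u(a-h^*,y)-v(a,y)=u(a-h^*,y)\ge0$; and at $|y|\to\infty$ both terms decay to $0$ uniformly (Corollary~\ref{decry}, Proposition~\ref{majoration}). So $w\ge0$, $w$ solves a linear equation, and $w$ is not identically $0$; by the strong maximum principle $w>0$ in the interior of the slab.

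The remaining — and main — obstacle is the noncompactness in $y$: one cannot simply say ``$w$ is bounded below by a positive constant, so slide further.'' To handle this I would combine the interior strict positivity on a large ball $[-a,a-h^*]\times \overline{B_R}$ (where, by compactness, $w\ge\delta>0$ for $x_1$ in any compact subset of $(-a,a-h^*)$) with the exterior comparison principle of Proposition~\ref{comparison}: choose $R$ as in that proposition so that $\alpha g(y)>K$ for $|y|>R$. For $h$ slightly larger than $h^*$, on the region $[-a,a-h]\times\overline{B_R}$ the desired inequality $v(\cdot+h,\cdot)\le u$ persists by continuity (using $\delta$ and uniform continuity of $v$ in $x_1$, together with the boundary checks at $x_1=-a$ using strict monotonicity of $v$ in $x_1$, and at $x_1=a-h$ using $v(a,\cdot)=0\le u$); on the exterior region $[-a,a-h]\times(\R^{N-1}\setminus\overline{B_R})$, both functions solve \eqref{jeq}, and on its boundary — which consists of $x_1\in\{-a,a-h\}$, $|y|=R$, and the behavior at $|y|\to\infty$ — we have already verified $v(\cdot+h,\cdot)\le u$, so Proposition~\ref{comparison} gives the inequality throughout the exterior. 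Patching the two regions yields $v(\cdot+h,\cdot)\le u$ on all of $[-a,a-h]\times\R^{N-1}$ for some $h<h^*$, contradicting the definition of $h^*$. Hence $h^*=0$, which is exactly the claim. I expect the delicate bookkeeping to be in the step that slides past $h^*$ in the exterior region; the interior is standard strong-maximum-principle sliding, but the exterior must rely on Proposition~\ref{comparison} rather than on a uniform lower bound for $w$.
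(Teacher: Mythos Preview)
Your approach is the paper's own sliding argument, and the overall strategy is sound, but two steps as written do not go through.

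First, your justification of the starting point (the inequality for $h$ close to $2a$) is incomplete. Saying that $v(x_1+h,\cdot)$ is ``near the boundary value $0$'' while $u(x_1,\cdot)\ge 0$ does \emph{not} yield $v_h\le u$: both functions tend to $0$ as $|y|\to\infty$, so for large $|y|$ there is no margin and the inequality cannot be read off from continuity alone. The paper---and you yourself, later in the argument---handle the unboundedness in $y$ by splitting: establish $v_h\le u$ on the compact set $\overline{I_h}\times\overline{B_R}$ by continuity (there $u$ is close to $V$, which is bounded away from zero on $\overline{B_R}$, while $v_h$ is uniformly small), and then invoke Proposition~\ref{comparison} to extend to $|y|>R$. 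You need exactly this device already at the starting step.

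Second, you write that ``$v$ is decreasing in $x_1$ by Theorem~\ref{boite}'' in order to check $w(-a,\cdot)>0$. This is circular: the monotonicity conclusion in Theorem~\ref{boite} is deduced \emph{from} the present lemma (by taking $u=v$). Fortunately monotonicity is not needed here: since $h^*>0$, the point $-a+h^*$ lies in the open slab $(-a,a)$, and the strict inequality $0<v<V$ there (from the strong maximum principle, stated at the start of the lemma's proof) already gives $v(-a+h^*,y)<V(y)=u(-a,y)$. Finally, ``for $h$ slightly larger than $h^*$'' should read ``slightly smaller''; you state the correct conclusion $h<h^*$ a few lines later, so this is presumably a slip.
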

\begin{proof}[Proof of the lemma]
By proposition \ref{majoration}, we have
$0\leq u\leq V$ (resp. $0\leq v \leq V$) and using the strong
maximum principle, we obtain $0<u<V$ (resp. $0<v<V$) on
$(-a,a)\times \R^{N-1}$. 
 
For $h\in [0,2a)$, let $I_h=(-a,a-h)$ and for $(x_1,y)\in \overline{I_h}\times \R^{N-1}$, set $v_h(x_1,y)=v(x_1+h,y)$. 

Let us fix $R>0$ such that $g(y
)>\frac{K}{\alpha}$ for $y \not\in \overline{B_R}$. By compactness and
continuity of $u$ and $v$, there exists $\ep>0$ such that $v_h \leq
u$ on $\overline{I_h} \times \overline{B_R}$ for any $h$ such that $2a-\ep\leq h < 2a$. Proposition \ref{comparison} shows that $v_h \leq u$ on $\overline{I_h} \times
\R^{N-1}$ for any $h\geq 2a-\ep$.
This enables us to define $$h^*= \inf \{h\geq 0, \, v_h\leq u \text{ on }
\overline{I_h} \times \R^{N-1}\}.$$ 

Let us prove that $h^*=0$ and argue by contradiction that $h^*>0$. 
By continuity, $v_{h^*}\leq u$ on $\overline{I_{h^*}} \times \R^{N-1}$. 

Suppose that $\displaystyle \min_{I_{h^*} \times B_R} u-v_{h^*}>0$. This would imply that for $h^*-h>0$ small, $\displaystyle \min_{I_{h} \times B_R} u-v_{h}>0$ and by Proposition \ref{comparison}, $v_h\leq u$ on $I_h \times \R^{N-1}$ in contradiction with the definition of $h^*$. Therefore $\displaystyle \min_{I_{h^*} \times B_R} u-v_{h^*}=0$. This implies the existence of $(x_1^*,y^*)\in I_{h^*}\times \overline{B_R}$ such that $v_{h^*}(x_1^*,y^*)=u(x_1^*,y^*)$ (note that $u-v_{h^*}>0$ for $x_1=-a$ or $a-h^*$). Writing in the usual way that $u-v_{h^*}$ is solution of a linear elliptic equation in $I_{h^*}\times \R^{N-1}$ and $u-v_{h^*}\geq 0$ with $u-v_{h^*}$ vanishing at the point $(x_1^*,y^*)$, the strong maximum  principle implies that $u-v_{h^*}\equiv 0$ which is impossible.
\end{proof}

Applying the preceding lemma with $h=0$ yields the uniqueness of
the solution of \eqref{FPB}. Taking $u=v=u_a^c$, one sees that $u_a^c$ is monotone decreasing. Thus $\partial _1 u_{a}^c \leq 0$ and deriving
equation \eqref{FPB} and applying once more the maximum principle gives
$\partial _1 u_{a}^c<0$.  

It remains to study the behavior of $u_{a}^c$ with respect to
$c$. The continuity is deduced from the uniqueness of the
solution and a priori estimates in the standard way. Now let $c_1<c_2$ and denote by $u_1$ (resp. by $u_2$) the
solution of \eqref{FPB} with $c=c_1$ (resp. $c=c_2$). Since $\partial _1 u_1<0$,
$$
\Delta u_1 +c_2 \partial_1 u_1 +f(u_1)-\alpha g(y)
u_1=(c_2-c_1)\partial_1 u_1 <0
$$
and $u_1>0$ is a super-solution of equation \eqref{FPB} with
$c=c_2$. By uniqueness of the solution, necessarily $u_2\leq
u_1$. Once more the strong maximum principle implies $u_2<u_1$. 

\subsection{Convergence to a solution on $\Rn$}

Now that the equation is solved on a domain bounded in the $x_1$-direction, the idea is
to increase $a$ up to infinity so that the domain
tends to $\Rn$. However if $c$ is chosen 
arbitrarily, the function $u_a^c$ may converge toward the constant 0
or to $V$ when $a$ tends to infinity. Hence we adopt a normalization method as in \cite{BN}.  The following
theorem will define the value of the speed $c$ depending on $a$ to
avoid those situations. We recall that since $\alpha<\alpha_0$,
$\lambda_\alpha$ the principal eigenvalue of the linearized operator
about the solution 0 is negative. 

\hspace{-7mm}
\begin{minipage}{0.62\textwidth}
\begin{theorem}
\label{ca}
Let us fix $\ep>0$. Let $\delta >0$ be such that $\delta
< -\lambda_\alpha\leq f'(0)$. Let $\eta >0$ be such that $\forall
s\in [0, \eta] \; f(s)\geq (f'(0)-\delta)s$. We fix $\theta \in
(0, \frac{\eta}{2})$.\\
Then there exists $A(\ep)>0$ such that for all $a\geq A(\ep)$, there exists a
unique speed $c_a\in (0,2\sqrt{-\la}+\ep)$ with
$u_a^{c_a}(0,0)=\theta$.     
\end{theorem}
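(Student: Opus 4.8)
The plan is to establish, for each large $a$, existence and uniqueness of a speed $c_a$ in the prescribed interval by exploiting the monotonicity and continuity of $c \mapsto u_a^c$ proved in Theorem~\ref{boite}. Define $\Phi_a(c) = u_a^c(0,0)$. By Theorem~\ref{boite}, $\Phi_a$ is continuous and strictly decreasing in $c$ (strictly, because $u_a^{c_2} < u_a^{c_1}$ pointwise when $c_1 < c_2$, by the strong maximum principle argument at the end of that subsection). Hence it suffices to show two inequalities for $a$ large: first, $\Phi_a(0) > \theta$, i.e.\ $u_a^0(0,0) > \theta$; and second, $\Phi_a(2\sqrt{-\la}+\epsi) < \theta$, i.e.\ $u_a^{c}(0,0) < \theta$ at $c = 2\sqrt{-\la}+\epsi$. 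The intermediate value theorem then yields a unique $c_a \in (0, 2\sqrt{-\la}+\epsi]$ with $\Phi_a(c_a) = \theta$; strict monotonicity gives uniqueness, and since $\Phi_a(0) > \theta$ the solution is genuinely positive, $c_a > 0$.

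For the lower bound $u_a^0(0,0) > \theta$, I would build a subsolution of \eqref{FPB} with $c=0$ that exceeds $\theta$ at the origin. The natural candidate is a small multiple of the principal eigenfunction of the linearized operator about $0$ restricted suitably, or more concretely a function of the form $\epsi \pa(y)\zeta(x_1)$ where $\zeta$ is a cut-off in $x_1$ built from the one-dimensional eigenfunction $\cos(\omega x_1)$ on a large interval; since $\la < 0$, for $a$ large enough one can choose the frequency $\omega$ so small that $\omega^2 + \la < 0$, and then using $f(s) \ge (f'(0)-\delta)s$ near $0$ together with $f'(0) + \la > 0$ one checks this product is a subsolution. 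Normalizing so that its value at $(0,0)$ equals, say, $\eta/2 > \theta$, and noting it lies below $V$ and has the right boundary values (it vanishes at $x_1 = \pm a$, which is $\le V,\ \le 0$ respectively — here one must be slightly careful at $x_1 = -a$ where the boundary datum is $V$, not $0$, but $V$ dominates the subsolution there), the comparison/uniqueness argument forces $u_a^0 \ge$ this subsolution, whence $u_a^0(0,0) \ge \eta/2 > \theta$. This step — producing a subsolution whose value at the origin is bounded below by a constant independent of $a$ — is the main obstacle, since it is exactly the place where the spectral information $\la < 0$ and the KPP-type lower bound on $f$ near $0$ must be combined quantitatively; the geometry of the cut-off in $x_1$ has to be chosen so that the subsolution does not degenerate as $a \to \infty$.

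For the upper bound $u_a^{c}(0,0) < \theta$ at $c = 2\sqrt{-\la}+\epsi$, I would construct an explicit supersolution on the half-region $\{x_1 \ge 0\}$ of exponential type, $\overline{w}(x_1,y) = \theta\, e^{-\mu x_1}\, \frac{\pa(y)}{\pa(0)}$ (or a similar product), and use it on $(0,a)\times\R^{N-1}$. Plugging into $-\Lap w - c\partial_1 w + \alpha g(y) w - f(w)$ and using $f(s) \le f'(0) s$ (KPP) together with $-\Lap \pa + \alpha g \pa = (f'(0) + \la)\pa$, one gets a supersolution provided $\mu^2 - c\mu - \la \ge 0$, i.e.\ provided $c \ge 2\sqrt{-\la}$ — and with $c = 2\sqrt{-\la} + \epsi$ there is room to choose such $\mu > 0$. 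Since $u_a^c < V$ is bounded, at $x_1 = 0$ one has $u_a^c(0,\cdot) \le \|V\|_\infty$; after rescaling the supersolution by $\|V\|_\infty/\theta$ and translating it so that it dominates $u_a^c$ at $x_1 = 0$ and at $x_1 = a$ (where $u_a^c = 0$), comparison gives $u_a^c(x_1,y) \le C e^{-\mu x_1}\pa(y)$ on the strip. This bound is uniform in $a$, so evaluating at $x_1 = x_0$ for a fixed large $x_0$ shows $u_a^c(x_0,0)$ is small; combined with $\partial_1 u_a^c < 0$ this does not immediately control $u_a^c(0,0)$, so instead I would center the supersolution correctly from the start — choosing the shift so that $C$ absorbs $\|V\|_\infty$ and then $u_a^c(0,0) \le C\,\pa(0)$; since this must be made $< \theta$, one in fact argues by a translation-and-sliding of the exponential supersolution to the right until it first touches, which pins $u_a^c(0,0)$ below $\theta$ for all $a \ge A(\epsi)$. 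Finally, strict monotonicity of $\Phi_a$ and continuity close the argument.
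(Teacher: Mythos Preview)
Your overall architecture is exactly that of the paper: define $\Phi_a(c)=u_a^c(0,0)$, use the continuity and strict monotonicity of Theorem~\ref{boite}, and trap $\theta$ between $\Phi_a(0)$ and $\Phi_a(c)$ for $c$ slightly above $2\sqrt{-\la}$. Part~(i) is essentially right; your cosine profile $\cos(\omega x_1)\pa(y)$ works once $\omega^2<-\la-\delta$, i.e.\ for $a$ large. The paper makes a slightly cleaner choice here: it takes the \emph{linear} function $h(x_1)=\eta\frac{a-x_1}{2a}$ and sets $v(x_1,y)=h(x_1)\pa(y)$, so that $\partial_1^2 v\equiv 0$ and the sub-solution inequality reduces immediately to $(\la+\delta)v\le 0$, valid for every $a$.

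Part~(ii), however, has a real gap. Your candidate super-solution $C e^{-\mu x_1}\pa(y)$ on $\{x_1\ge 0\}$ cannot be compared to $u_a^c$ at $x_1=0$ without already knowing something about $u_a^c(0,\cdot)$, which is precisely what you are trying to bound; you recognize this, but the ``translation-and-sliding'' fix you sketch at the end is not an argument. Moreover, it is not clear that any constant multiple of $\pa$ dominates $V$ on all of $\R^{N-1}$, since $V$ and $\pa$ have the same asymptotic decay governed by $\alpha g$.

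The paper resolves both issues at once by a perturbation in the transverse eigenvalue problem. Pick $\beta<\alpha$ with $2\sqrt{-\lambda_\beta}\le 2\sqrt{-\la}+\epsi$ (possible by continuity of $\alpha\mapsto\la$), and let $\psi_\beta$ be the corresponding eigenfunction. Because $(\alpha-\beta)g(y)+\lambda_\beta>0$ for $|y|$ large, a short maximum-principle argument gives a constant $k$ (depending only on $\epsi$) with $k\psi_\beta\ge V$ on all of $\R^{N-1}$. Now work on the \emph{full} slab $(-a,a)$: set $w(x_1,y)=z(x_1)k\psi_\beta(y)$ where $z''+cz'-\lambda_\beta z=0$, $z(-a)=1$, $z(a)=0$. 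For $c>2\sqrt{-\la}+\epsi\ge 2\sqrt{-\lambda_\beta}$ the characteristic roots $\rho_\pm$ are real and negative, $w$ is a genuine super-solution of \eqref{FPB} with the correct boundary data, and an explicit computation gives $z(0)\le e^{-ca/2}$, hence $u_a^c(0,0)\le w(0,0)\le k\,e^{-ca/2}<\theta$ once $a\ge A(\epsi)$. The two missing ingredients in your argument are thus (a) passing to $\psi_\beta$ to ensure domination of $V$, and (b) imposing the super-solution on the whole interval $(-a,a)$ so that the boundary comparison is with the known data $V$ and $0$ rather than with the unknown $u_a^c(0,\cdot)$.
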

\end{minipage} \hfill 
\begin{minipage}{0.35\textwidth}
\includegraphics[width=5cm]{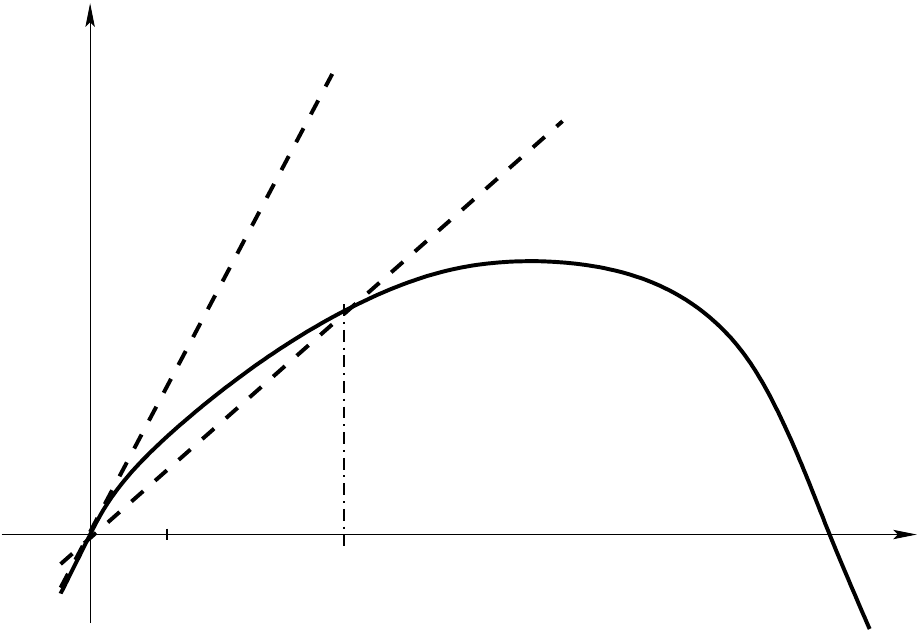}
\end{minipage}
\begin{proof}
By continuity and monotonicity, it suffices to prove: 
\begin{enumerate}[i)]
\item $u_a^0(0,0)>\theta$, 
\item $u_a^c(0,0)<\theta$ for $c=2\sqrt{-\la}+\ep$
and $a$ large enough.
\end{enumerate}

i) Assume $c=0$. Let $\pa$ be the positive eigenfunction
of the linearized operator $L$ associated with the first eigenvalue
$\la <0$ and with the normalization $\|\pa\|_\infty=1$. 
Let us introduce
$v(x_1,y)=h(x_1)\pa(y)$ for $(x_1,y)\in [-a,a]\times \Rn$ where
$h(x_1)=\eta \frac{a-x_1}{2a}$. Then $0<v\leq \eta$ on $[-a,a]\times \Rn$,
which yields
\begin{eqnarray*}
-\Delta v + \alpha g(y) v-f(v) &\leq& -\Delta v+\alpha g(y)v -
 (f'(0)-\delta)v \\
& = & (\la+\delta)v\leq 0.
\end{eqnarray*}
Moreover by construction of $V$ (cf section \ref{section: asympt}), $v(-a,y)=\eta \pa (y)<V(y)$ if $\eta$ is small enough. Then
$v(a,y)=0$ and $v\leq 1$. Hence, $v$ is a sub-solution of \eqref{FPB} for $c=0$. Thus
$v\leq u_a^0$ and therefore, $u_a^0(0,0) \geq
v(0,0)=\frac{\eta}{2}>\theta$.

ii) Let us construct an explicit super-solution for 
$c=2\sqrt{-\la}+\ep$. We recall from section \ref{principal} that
$\lambda_\beta <\la$ for $\beta<\alpha$ and that
$\lim_{\beta\rightarrow \alpha}\lambda_\beta=\la$.  
Thus there exists $\beta\in
(0,\alpha)$ such that $2\sqrt{-\lambda_\beta}\leq 2\sqrt{-\la}+\ep$.
As before, let $\psi_\beta$ denote the positive eigenfunction
of the linearized operator $L$ associated with the first eigenvalue
$\lambda_\beta <0$ with the normalization $\psi_\beta(0)=1$. Choose $R$ such that for all $r\geq R$,  $(\alpha
-\beta)g(r)+\lambda_\beta>0$ and $\alpha g(r)>f'(0)$, and choose $k>0$ such
that $k \psi_\beta \geq V$ on $\overline{B}_R$. The constant $k$ only depends
on $\beta$ hence on $\ep$.

\begin{lemma}
Then $k \psi_\beta\geq V$ on $\R^{N-1}$.
\end{lemma}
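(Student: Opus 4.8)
The plan is to compare $V$ with $w:=k\psi_\beta$ by a maximum-principle argument on the exterior domain $\Omega:=\R^{N-1}\setminus\overline{B_R}$, on whose boundary $\partial B_R$ we already have $w\ge V$ by the choice of $k$. First I would record the equation satisfied by $w$: from $-\Lap\psi_\beta+(\beta g-f'(0))\psi_\beta=\lambda_\beta\psi_\beta$ one gets $\Lap w=-(f'(0)-\beta g+\lambda_\beta)w$ on $\R^{N-1}$. Next I would check that $w$ tends to $0$ at infinity: $\psi_\beta$ is positive and bounded, and $\Lap\psi_\beta/\psi_\beta=\beta g-f'(0)+\lambda_\beta\to+\infty$ by \eqref{Hypginf}, so Theorem~\ref{BR} applies exactly as in Corollary~\ref{decry} and gives $\psi_\beta(y)\to 0$, hence $w(y)\to 0$, as $|y|\to\infty$. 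Together with the decay of $V$ from Corollary~\ref{decry}, the function $z:=V-w$ tends to $0$ at infinity.

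Then I would argue by contradiction. If $\sup_\Omega z>0$, then since $z\le 0$ on $\partial B_R$ and $z\to 0$ at infinity, this positive supremum is attained at an interior point $y_0\in\Omega$, where $\Lap z(y_0)\le 0$ and $0<w(y_0)<V(y_0)\le 1$ (the last bound from the maximum principle for \eqref{PA}). Subtracting $\Lap w=-(f'(0)-\beta g+\lambda_\beta)w$ from $\Lap V=\alpha gV-f(V)$ at $y_0$ gives
\[
\Lap z(y_0)=\big(\alpha g(y_0)V(y_0)-f(V(y_0))\big)+\big(f'(0)-\beta g(y_0)+\lambda_\beta\big)w(y_0).
\]
Using the KPP bound $f(V(y_0))\le f'(0)V(y_0)$, and then the fact that $\alpha g(y_0)-f'(0)>0$ for $|y_0|\ge R$ together with $V(y_0)>w(y_0)>0$ to replace $V(y_0)$ by $w(y_0)$ with a strict loss, I obtain
\[
\Lap z(y_0)>\big(\alpha g(y_0)-f'(0)\big)w(y_0)+\big(f'(0)-\beta g(y_0)+\lambda_\beta\big)w(y_0)=\big((\alpha-\beta)g(y_0)+\lambda_\beta\big)w(y_0)>0,
\]
the final inequality again by the choice of $R$. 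This contradicts $\Lap z(y_0)\le 0$, so $z\le 0$ throughout $\Omega$, i.e. $k\psi_\beta\ge V$ on all of $\R^{N-1}$.

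The only genuinely delicate point is the decay $\psi_\beta\to 0$ at infinity, which is what lets the supremum of $z$ be localized at an interior maximum; once that is in hand, everything reduces to the one-point computation above. If one prefers not to invoke Theorem~\ref{BR} for $\psi_\beta$, the same conclusion follows by running the comparison on bounded annuli $B_{R'}\setminus\overline{B_R}$ and letting $R'\to\infty$, controlling the outer boundary via $w\ge V$ on $\partial B_R$ and $V\to 0$ on $\partial B_{R'}$; but quoting the exponential decay is the cleanest route.
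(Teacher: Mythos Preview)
Your proof is correct and follows essentially the same approach as the paper's: argue by contradiction, localize the extremum of $k\psi_\beta-V$ (you use $z=V-k\psi_\beta$, the paper uses the opposite sign) at an interior point $y_0$ outside $\overline{B_R}$, and derive a contradiction with the sign of the Laplacian there via the identity $-\Lap(k\psi_\beta-V)+(\alpha g-f'(0))(k\psi_\beta-V)=((\alpha-\beta)g+\lambda_\beta)k\psi_\beta+(f'(0)V-f(V))$, both terms on the right being positive by the choice of $R$ and the KPP condition. Your version is in fact slightly more careful than the paper's, which simply asserts that $k\psi_\beta-V\to 0$ at infinity without justifying the decay of $\psi_\beta$; your appeal to Theorem~\ref{BR} for $\psi_\beta$ fills that gap cleanly.
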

\begin{proof}[Proof of the lemma]
 We follow a similar proof to that of lemma \ref{comparison}:  \\
 If the lemma does not stand, since
$k\psi_\beta-V$ tends to 0 at $\infty$, there exists $y_0 \in\R^{N-1}\setminus \overline{B}_R$
such that $(k\psi_\beta-V)(y_0)=\min_{\R^{N-1}} (k\psi_\beta-V)<0$. At
this point, $\Delta (k\psi_\beta-V)\geq 0$
 but 
\begin{eqnarray*}
-\Delta (k\psi_\beta-V)+\alpha g(y_0)
(k\psi_\beta-V)-f'(0)(k\psi_\beta-V)
= \qquad \qquad \qquad \qquad  \\
\big((\alpha-\beta)g(|y_0|) +\lambda_\beta\big) k\psi_\beta
+f'(0)V-f(V) 
\geq \big((\alpha-\beta)g(|y_0|)+\lambda_\beta\big)k\psi_\beta.
\end{eqnarray*}
By the choice of $R$, we get $\Delta (k\psi_\beta-V)(y_0)<0$ which yields
a contradiction. 
\end{proof}

Let us now build the super-solution when $c=2\sqrt{-\la}+\ep$. We set
$w(x_1,y)=z(x_1)k\psi_\beta(y)$ where $z$ is the solution of 
$$
\begin{cases}
z''+cz'-\lambda_\beta z=0 \quad \text{on } (-a,a), \\
z(-a)=1, \quad z(a)=0.
\end{cases}
$$
Then $w$ verifies
$$
\begin{cases}
-\Delta w-c\partial_{1}w+\alpha
g(y)w=(\alpha-\beta)g(y)w+f'(0)w\geq f(w) \\
w(-a,.)=k\psi_\beta\geq V, \quad w(a,.)=0.
\end{cases}
$$
so it is indeed a super-solution of \eqref{FPB}. Moreover
$$
z(x)=\frac{e^{\rho_+(x-a)}-e^{\rho_-(x-a)}}{e^{-\rho_+2a}-e^{-\rho_-2a}} \geq 0
$$
where $\rho_-<\rho_+<0$ are the roots of $\rho^2+c\rho-\lambda_\beta=0$,
i.e. $\rho_\pm = \frac{-c\pm\sqrt{c^2+4\lambda_\beta}}{2}$ (note that $c^2+4\lambda_\beta\geq 0$). Hence 
\begin{eqnarray*}
0 <
u_a^c(0,0)&<&w(0,0)=\frac{e^{-\rho_+a}-e^{-\rho_-a}}{e^{-\rho_+2a}-e^{-\rho_-2a}}k\psi_\beta(0)
\\
 &=&\frac{1}{e^{-\rho_+a}+e^{-\rho_-a}}k\psi_\beta(0)\leq e^{-\frac{c}{2}a}k \leq e^{-a\sqrt{-\lambda_\alpha}}k
\end{eqnarray*}
Thus if $a$ is large enough to have
$e^{-a\sqrt{-\lambda_\alpha}}<\frac{\theta}{k}$, then for $c=2\sqrt{-\la}+\ep$, we get  $u_a^c(0,0)<\theta$.
\end{proof}
With the bounds on the speed $c_a$ it is now possible to pass to the limit as  $a$ tends to infinity.
\begin{proposition}
\label{ainfty}
There exists a sequence $(a_j)_{j\in \N}$ such that
$a_j \rightarrow +\infty$, $c_{a_j}\rightarrow \cu\in [0, 2\sqrt{-\la}]$  and
$u_{a_j}^{c_{a_j}}\rightarrow u$ in $\CC^2_{loc}(\Rn)$. The limit $u$
is solution of
\begin{equation}
\label{systainf}
\begin{cases}
-\Delta u -\cu \partial_1 u +\alpha g(y) u = f(u) \quad \text{on } \Rn \\
0 \leq u\leq V, \quad u(0,0)=\theta, \quad \partial_1 u \leq 0. 
\end{cases}
\end{equation}
Then $u$ is necessarily a traveling front solution of \eqref{FPc} with
$c=\cu$. 
\end{proposition}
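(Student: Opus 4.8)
The plan is to run a compactness argument: pass to the limit $a\to+\infty$ in the problems \eqref{FPB} along the speeds $c_a$ produced by Theorem~\ref{ca}, and then show the limit is a front by analyzing its (monotone) sections in the $x_1$ variable. First I would set up the sequence: for each $j\in\N$, apply Theorem~\ref{ca} with $\epsi=1/j$ and pick $a_j\geq\max\{A(1/j),j\}$, so that $a_j\to+\infty$ and there is a speed $c_{a_j}\in(0,\,2\sqrt{-\la}+1/j]$ with $u_{a_j}^{c_{a_j}}(0,0)=\theta$. Since $(c_{a_j})$ is bounded, a subsequence satisfies $c_{a_j}\to\cu$ for some $\cu\in[0,2\sqrt{-\la}]$. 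By Proposition~\ref{majoration} and Theorem~\ref{boite}, $0<u_{a_j}^{c_{a_j}}<V\leq1$ and $\partial_1 u_{a_j}^{c_{a_j}}<0$ on $(-a_j,a_j)\times\R^{N-1}$. Rewriting the equation as $-\Lap w-c\,\partial_1 w+(\alpha g+1)w=f(w)+w$ — a uniformly elliptic equation with locally bounded coefficients and bounded right-hand side — interior elliptic estimates give bounds on $u_{a_j}^{c_{a_j}}$ that are uniform in $j$ on every compact subset of $\Rn$ (once $j$ is large enough that the set lies in the domain). By a standard compactness (Arzel\`a--Ascoli) and diagonal argument, $u_{a_j}^{c_{a_j}}\to u$ in $\CC^2_{loc}(\Rn)$ along a further subsequence, and passing to the limit in the equation, in the inequalities, and at the point $(0,0)$ yields precisely \eqref{systainf}.

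Next I would identify the two limiting states of $u$. Since $\partial_1 u\leq0$ and $0\leq u\leq V$, each section $x_1\mapsto u(x_1,y)$ is nonincreasing and bounded, so $u^\pm(y):=\lim_{x_1\to\pm\infty}u(x_1,y)$ exist with $u^+\leq u\leq u^-$. For any $s_n\to+\infty$, the translates $u(\cdot+s_n,\cdot)$ solve the same equation with the same uniform bounds, hence (after extraction) converge in $\CC^2_{loc}$ to a solution $u_\infty$; since $u(\cdot+s_n,\cdot)\to u^+$ pointwise, $u_\infty\equiv u^+$ is independent of $x_1$, so $u^+$ solves the stationary problem \eqref{PA}; likewise $u^-$ does (take $s_n\to-\infty$). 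Because $\alpha<\alpha_0$, the uniqueness theorem for \eqref{PA} forces $u^+,u^-\in\{0,V\}$. Now $\partial_1 u\leq0$ and $u(0,0)=\theta$ give $u(x_1,0)\geq\theta$ for $x_1\leq0$, hence $u^-(0)\geq\theta>0$ and so $u^-=V$; and $u(x_1,0)\leq\theta$ for $x_1\geq0$, hence $u^+(0)\leq\theta$. Since the choices in Theorem~\ref{ca} make $\theta$ small, in particular $\theta<V(0)$ (this follows from $\theta<\eta/2$ together with the bound $\eta\pa<V$ used there), the case $u^+=V$ is excluded, so $u^+=0$; in particular $u\not\equiv0$ and $u\not\equiv V$, so $u$ is genuinely nonconstant in $x_1$.

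Finally I would upgrade the convergences $u(x_1,\cdot)\to V$ as $x_1\to-\infty$ and $u(x_1,\cdot)\to0$ as $x_1\to+\infty$ from pointwise to uniform in $y$: fix $\rho>0$ and choose $R$ with $\sup_{|y|\geq R}V(y)<\rho$, possible since $V(y)\to0$; on the compact ball $\overline{B_R}$ the monotone (in $x_1$) convergence of the continuous functions $u(x_1,\cdot)$ to their continuous limit is uniform by Dini's theorem, while for $|y|\geq R$ one has $0\leq u(x_1,y)\leq V(y)<\rho$ and $0\leq V(y)-u(x_1,y)\leq V(y)<\rho$. Hence $\sup_y u(x_1,y)\to0$ and $\sup_y|u(x_1,y)-V(y)|\to0$ along the respective limits, so $u$ is a traveling front of speed $\cu\in[0,2\sqrt{-\la}]$, i.e.\ it solves \eqref{FPc}. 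The main obstacle will be the middle step: showing that the $x_1$-limits of $u$ are \emph{true} solutions of \eqref{PA} (which requires the translation-and-compactness argument, not a bare pointwise limit) and then combining the classification of those solutions with the normalization $u(0,0)=\theta$ to exclude the degenerate limits $u\equiv0$ and $u\equiv V$; the final uniformity in $y$ is then a routine consequence of the decay of $V$ and Dini's theorem.
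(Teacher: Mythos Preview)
Your proof is correct and follows the same route as the paper's: choose $\epsi_j\downarrow 0$, extract via elliptic estimates, pass to the limit to get \eqref{systainf}, and use the classification of solutions of \eqref{PA} together with the normalization $u(0,0)=\theta$ to pin down the two end-states. You are in fact more careful than the paper in two places: you spell out the translation--compactness argument showing that the monotone limits $u^\pm$ genuinely solve \eqref{PA}, and you supply the Dini-plus-decay-of-$V$ argument upgrading the pointwise convergence in $y$ to the uniform convergence required in \eqref{FPc}; the paper glosses over both. One small remark: your inference $\theta<V(0)$ from ``$\theta<\eta/2$ and $\eta\pa<V$'' tacitly uses $\pa(0)=\|\pa\|_\infty=1$, the same implicit assumption the paper makes in the proof of Theorem~\ref{ca}(i); alternatively, once you know $u^-=V$, the strong maximum principle gives $\partial_1 u<0$, hence $u^+\neq u^-$, which rules out $u^+=V$ without needing $\theta<V(0)$.
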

\begin{proof}
First for $j\in \N$, fix $\ep_j=\frac{1}{j+1}$ and $a_j\geq
A(\ep_j)$ with $a_j\rightarrow +\infty$.
Since $c_{a_j}\in (0,2\sqrt{-\la}+\ep_j]$ is bounded,
$u_{a_j}^{c_{a_j}}$ is uniformly bounded in 
$\CC^{2,\gamma}$ for any $\gamma \in (0,1)$. Hence up to an extraction of a subsequence,
there exist $\cu\in [0,2\sqrt{-\la}]$ and $u\in \CC^2_{loc}$ such that
$c_a\rightarrow \cu$ and $u_a^{c_a}\rightarrow u$. Clearly the function $u$ is
a solution of \eqref{systainf}. Owing to the normalization $u(0,0)=\theta$, $u$ is not a constant, moreover by the maximum
principle $0<u<V$ and $\partial_xu<0$. Since $u$ is decreasing in
$x$, $\displaystyle u_\pm =\lim_{x\rightarrow \pm \infty} u(x,\cdot)$ are
solutions of \eqref{PA} and $u_-(0)>\theta>0$ and $0 \leq u_+
(0)<\theta$. This implies that $u_-=V$ and $u_+\equiv
0$. Thus $u$ is indeed a traveling front solution of \eqref{FPc}.
\end{proof}

\subsection{Existence of traveling front for $c\geq 2\sqrt{-\la}$.}
\label{section: exist}

In this section we still assume $0<\alpha <\alpha_0$ and we will
prove the following theorem. 
\begin{theorem}
\label{existc*}
There exists a
traveling front of speed $c$ of equation \eqref{FPc} if and only if $c\geq
2\sqrt{-\la}$. 
\end{theorem}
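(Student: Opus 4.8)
The plan is to prove Theorem \ref{existc*} in two halves: first, that traveling fronts exist for every $c \geq 2\sqrt{-\la}$, and second, that no traveling front exists for $c < 2\sqrt{-\la}$. For the existence half, Proposition \ref{ainfty} already furnishes a front of some speed $\cu \in [0, 2\sqrt{-\la}]$. I would first argue that in fact $\cu = 2\sqrt{-\la}$: the linear analysis at $x_1 \to +\infty$ forces this. Since $u(x_1, \cdot) \to 0$ and $u$ solves \eqref{FPc}, linearizing about $0$ and projecting onto $\pa$ (multiply the equation by $\pa(y)$ and integrate in $y$, using the exponential decay from Corollary \ref{decry} to justify Stokes) reduces the far-field behavior to a scalar ODE $\phi'' + \cu \phi' + (-\la)\phi \approx 0$ where $\phi(x_1) = \int u(x_1,y)\pa(y)\,dy > 0$. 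A positive decaying solution of this ODE at $+\infty$ requires the characteristic roots to be real, i.e. $\cu^2 - 4(-\la) \geq 0$, hence $\cu \geq 2\sqrt{-\la}$; combined with Proposition \ref{ainfty} this gives $\cu = 2\sqrt{-\la}$.

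Next, for $c > 2\sqrt{-\la}$ I would run the same finite-box construction of Theorem \ref{boite} but without the normalization, instead trapping $u_a^c$ between explicit sub- and super-solutions that do not degenerate as $a \to \infty$. The super-solution is of the same form $w(x_1,y) = z(x_1)k\psi_\beta(y)$ used in part ii) of Theorem \ref{ca}, but now with $z$ a decaying exponential on all of $\R$: pick $\beta \in (0,\alpha)$ with $2\sqrt{-\lambda_\beta} < c$, so $\rho^2 + c\rho - \lambda_\beta = 0$ has two negative real roots, and take $z(x_1) = e^{\rho_+ x_1}$ (suitably shifted/truncated). For the sub-solution, one uses $\max(0, \, \sigma e^{-\mu x_1}\pa(y) - C e^{-2\mu x_1})$-type constructions or, more simply, slides a small multiple of $\pa$ times a front profile of the appropriate decay rate; the key point is to produce a sub-solution on $(-a,a)\times\R^{N-1}$ that stays bounded below by a nontrivial quantity at $x_1 = 0$ uniformly in $a$. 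Passing to the limit $a \to \infty$ using $\CC^2_{loc}$ estimates as in Proposition \ref{ainfty}, and checking that the limiting $u$ has the correct limits $V$ and $0$ at $\mp\infty$ (again via monotonicity in $x_1$, which follows from Theorem \ref{boite}, plus the squeeze by sub/super-solutions), gives a front of speed $c$ for every $c > 2\sqrt{-\la}$.

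For the nonexistence half when $c < 2\sqrt{-\la}$: suppose $u$ is a front of speed $c$. Then $\phi(x_1) := \int_{\R^{N-1}} u(x_1,y)\pa(y)\,dy$ is well-defined, positive, and tends to $0$ as $x_1 \to +\infty$ (uniform convergence plus $\pa \in L^1$). Using $f(u) \leq f'(0)u$ (the KPP hypothesis) we get the differential inequality $\phi'' + c\phi' \geq (f'(0) - \la - f'(0))\phi$, i.e. $\phi'' + c\phi' + (-\la)\phi \leq 0$ — wait, more carefully: multiplying \eqref{FPc} by $\pa$, integrating in $y$, and using $-\Lap_y \pa + (\alpha g - f'(0))\pa = \la\pa$ together with Stokes (justified by Corollary \ref{decry}) yields $-\phi'' - c\phi' + \la\phi = \int(f(u) - f'(0)u)\pa \leq 0$, so $\phi'' + c\phi' \geq \la\phi$, i.e. $\phi'' + c\phi' - \la\phi \geq 0$ with $-\la > 0$. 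A positive function on $\R$ satisfying $\phi'' + c\phi' + (-\la)\phi \geq 0$ and decaying to $0$ at $+\infty$ cannot exist when $c^2 < 4(-\la)$: comparing with solutions of the constant-coefficient equation $\psi'' + c\psi' + (-\la)\psi = 0$, whose solutions all oscillate (complex roots) and change sign, forces $\phi$ to become negative — the standard oscillation/comparison argument. This contradiction rules out $c < 2\sqrt{-\la}$.

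I expect the main obstacle to be the existence part for $c > 2\sqrt{-\la}$, specifically constructing a sub-solution whose value at the origin does not collapse to zero as $a \to \infty$ — this is the delicate step since the naive choice $\eta\frac{a-x_1}{2a}\pa(y)$ used in Theorem \ref{ca} part i) was tailored to $c = 0$ and the linear-in-$x_1$ factor vanishes pointwise only mildly, but for $c > 0$ the drift term fights against it. The resolution is to use a sub-solution built from the decaying exponential $e^{\rho_- x_1}$ (slower-decaying root) or a compactly-supported-in-$x_1$ bump that exploits $f(s) \geq (f'(0)-\delta)s$ near $0$; one must check the boundary conditions at $x_1 = \pm a$ and that the sub-solution lies below the super-solution $w$ for all large $a$. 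Care is also needed to verify the limit $u$ is genuinely a front (nonconstant with the right limits) rather than degenerating — here monotonicity in $x_1$ from Theorem \ref{boite} and the uniform lower bound at the origin are what save the argument.
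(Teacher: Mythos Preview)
Your nonexistence argument for $c<2\sqrt{-\la}$ has a genuine gap: the differential inequality you derive goes the wrong way for a Sturm--type oscillation comparison. After projecting onto $\pa$ you correctly obtain
\[
\phi'' + c\phi' + (-\la)\phi \;\geq\; 0,
\]
because the KPP hypothesis gives $f(u)\leq f'(0)u$. Removing the drift by setting $\tilde\phi=e^{cx_1/2}\phi$ turns this into $\tilde\phi'' + k^2\tilde\phi\geq 0$ with $k^2=-\la-c^2/4>0$. But a positive function satisfying $\tilde\phi''+k^2\tilde\phi\geq 0$ need not change sign: constants, or any convex positive function, are examples. The Sturm comparison yields a contradiction only from the \emph{opposite} inequality $\tilde\phi''+k^2\tilde\phi\leq 0$; with your sign, the Wronskian monotonicity is compatible with $\tilde\phi>0$ everywhere. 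The decay $\phi\to 0$ at $+\infty$ does not rescue this, since you have no control on the rate and $\tilde\phi=e^{cx_1/2}\phi$ may well be unbounded. The same issue invalidates your argument that the front $u$ of speed $\cu$ built in Proposition~\ref{ainfty} must satisfy $\cu\geq 2\sqrt{-\la}$.

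The paper's route is the standard one for KPP: for $c<2\sqrt{-\la}$, the characteristic polynomial $X^2+cX-\la^R-\delta$ (with $\la^R$ a Dirichlet eigenvalue on a large ball $B_R$ and $\delta$ small) has complex roots, which lets one build a \emph{compactly supported} generalized sub-solution $w(x_1,y)=\epsi e^{\sigma x_1}\cos(\tfrac{\pi}{2L}x_1)\psi^R(y)$ on $(-L,L)\times B_R$. Sliding the putative front $u$ until it touches $w$ from above at an interior point contradicts the strong maximum principle. This simultaneously proves nonexistence for $c<2\sqrt{-\la}$ and forces $\cu=2\sqrt{-\la}$.

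For existence when $c>\cu$, the paper also differs from your plan: rather than designing new explicit sub- and super-solutions on $(-a,a)\times\R^{N-1}$, it uses the already--constructed front $u^*$ of speed $\cu$. Since $\partial_1 u^*<0$ and $c>\cu$, the shifted profile $u^*(\cdot+r,\cdot)$ is a strict super-solution of the problem at speed $c$; one solves \eqref{FPB} with boundary data $u^*(\pm a+r,\cdot)$, normalizes by choosing $r$ so that the solution equals $\theta$ at the origin, and passes to the limit $a\to\infty$. This bootstrap avoids the delicate sub-solution construction you flag as the main obstacle.
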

We start with the Proposition
\begin{proposition}
\label{minc*}
For $c<2\sqrt{-\la}$ there exists no traveling front solution of
\eqref{FPc}. Thus $\cu=2\sqrt{-\la}$ (where $\cu$ is the traveling speed constructed in the previous section).
\end{proposition}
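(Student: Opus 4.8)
**Proof proposal for Proposition \ref{minc*}.**

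The plan is to show that a traveling front with speed $c < 2\sqrt{-\la}$ forces a positive solution of the linearized equation at $0$ to have subexponential decay incompatible with the spectral bottom $\la$, via a sliding/comparison argument against an explicit supersolution built from the principal eigenfunction $\pa$. Concretely, suppose $u = u(x_1,y)$ solves \eqref{FPc} with some speed $c$; since $u(x_1,\cdot)\to 0$ uniformly as $x_1\to+\infty$, for any $\delta>0$ we have $f(u)\le (f'(0)+\delta)u$ on a half-cylinder $\{x_1 \ge X_\delta\}$ where $u$ is small (using $f\in\CC^1$ and $f(0)=0$). On that region $u$ is a subsolution of the linear operator $-\Lap w - c\partial_1 w + \alpha g(y)w - (f'(0)+\delta)w$.

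First I would construct, for each $\mu>0$ with $\mu^2 + c\mu + \lambda_\alpha^{(\delta)} = 0$ where $\lambda_\alpha^{(\delta)} := \la + \delta$ (note $\la<0$, so for $\delta$ small $\lambda_\alpha^{(\delta)}<0$ and such real roots $\mu$ exist precisely when $c^2 + 4\lambda_\alpha^{(\delta)} \ge 0$, i.e. $c \ge 2\sqrt{-\lambda_\alpha^{(\delta)}}$), the function $\overline w(x_1,y) = K e^{-\mu x_1}\pa(y)$. Using $-\Lap \pa + (\alpha g - f'(0))\pa = \la\pa$ one checks
$$
-\Lap \overline w - c\partial_1 \overline w + \alpha g(y)\overline w - (f'(0)+\delta)\overline w = \big(\mu^2 + c\mu + \la + \delta\big)\,Ke^{-\mu x_1}\pa(y) = 0,
$$
so $\overline w$ is an exact solution of the linearized operator on the half-cylinder. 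Choosing $K$ large enough that $K e^{-\mu X_\delta}\pa(y) \ge u(X_\delta,y)$ (possible because $u$ is bounded and $\pa$ is bounded below away from $0$ only on compacts — so here I would instead use Corollary \ref{decry}, which gives $u(X_\delta,y)\le C e^{-\gamma|y|}$ with $\gamma$ as large as we like, together with the known Gaussian/exponential-type lower bound on $\pa$, or more robustly compare on a large ball and use Proposition \ref{comparison} outside it), one gets by the maximum principle (the domain is a half-cylinder, $u\to 0$ and $\overline w\to 0$ at the relevant infinities) that $u(x_1,y) \le K e^{-\mu x_1}\pa(y)$ for all $x_1 \ge X_\delta$.

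The crucial step is the reverse direction: I would show $u$ cannot decay that fast unless $c\ge 2\sqrt{-\la}$. Fix $c < 2\sqrt{-\la}$; then for $\delta$ small, $c < 2\sqrt{-\lambda_\alpha^{(\delta)}}$, so $\mu^2 + c\mu + \la + \delta = 0$ has no real root and the complex roots have real part $-c/2$. From below, using $f(u) \ge (f'(0)-\delta)u$ for $u$ small, $u$ is a supersolution of $-\Lap w - c\partial_1 w + \alpha g w - (f'(0)-\delta)w$ on the half-cylinder. Project onto $\pa$: set $\phi(x_1) := \int_{\R^{N-1}} u(x_1,y)\pa(y)\,dy$, which is finite and positive by Corollary \ref{decry}. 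Multiplying the differential inequality by $\pa$ and integrating in $y$ (Stokes is justified by the exponential decay of $u,\nabla u$ and of $\pa,\nabla\pa$, exactly as in Section \ref{section: asympt}) yields
$$
-\phi''(x_1) - c\,\phi'(x_1) + \big(\la - \delta\big)\phi(x_1) \le 0, \qquad x_1 \ge X_\delta,
$$
i.e. $\phi'' + c\phi' - (\la-\delta)\phi \ge 0$ with $\phi>0$. A positive function satisfying such a second-order differential inequality on a half-line, with $\phi(x_1)\to 0$, must have $\liminf_{x_1\to\infty} -\phi'(x_1)/\phi(x_1) \ge \nu_+$ where $\nu_\pm = \tfrac{-c \pm \sqrt{c^2 + 4(\la - \delta)}}{2}$ are the roots of $\nu^2 + c\nu - (\la-\delta)=0$; in particular $\phi$ decays no faster than $e^{\nu_- x_1}$ is contradicted — more precisely, a standard comparison-ODE argument (comparing $\phi$ with solutions $e^{\nu_\pm x_1}$, or invoking that the operator $-\partial_{x_1}^2 - c\partial_{x_1} + (\la-\delta)$ has no positive supersolution decaying at $+\infty$ when its "bottom" obstructs it) shows $\phi$ cannot tend to $0$ at the rate dictated above unless $c^2 + 4(\la-\delta) \ge 0$. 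Letting $\delta\to 0$ gives $c \ge 2\sqrt{-\la}$, a contradiction. Combined with Proposition \ref{ainfty}, which produced a front of speed $\cu \in [0, 2\sqrt{-\la}]$, we conclude $\cu = 2\sqrt{-\la}$.

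\textbf{Main obstacle.} The delicate point is making the ODE comparison for $\phi$ rigorous: one needs that a positive function on $[X_\delta,\infty)$ with $\phi\to 0$ satisfying $\phi'' + c\phi' - (\la - \delta)\phi \ge 0$ cannot exist when the characteristic roots are complex (equivalently, that $0$ is "below" the generalized principal eigenvalue of the half-line operator). This is where the hypotheses genuinely bite, and it must be handled either by an explicit sub/supersolution ansatz $e^{-cx_1/2}\sin(\omega x_1 + \varphi_0)$ forcing $\phi$ to change sign, or by the exponential-decay machinery of \cite{BR08} (Theorem \ref{BR}) applied in the $x_1$ variable. The uniform-in-$y$ convergence $u(x_1,\cdot)\to 0$ and Corollary \ref{decry} are exactly what legitimize the projection onto $\pa$ and the integrations by parts; everything else is routine parabolic/elliptic comparison.
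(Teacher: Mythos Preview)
Your projection argument (the second half of the proposal) is essentially correct, but you have a sign slip: from $f(u)\ge (f'(0)-\delta)u$ and the eigenvalue equation for $\pa$ one gets
\[
-\phi''-c\phi'+(\la+\delta)\phi\ \ge\ 0,
\]
not $\le 0$ with $\la-\delta$. After the substitution $\psi=e^{cx_1/2}\phi$ this becomes $\psi''+\omega^2\psi\le 0$ with $\omega^2=-(\tfrac{c^2}{4}+\la+\delta)>0$, and testing against $\sin(\omega(x_1-a))$ on any interval of length $\pi/\omega$ gives exactly the contradiction you anticipate in your ``Main obstacle'' paragraph. The first half of your proposal (the supersolution $\overline w=Ke^{-\mu x_1}\pa$) is unnecessary for the argument and, as you yourself noticed, runs into trouble because $\pa$ is not bounded below; you can simply drop it.

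The paper takes a genuinely different route. Rather than projecting onto $\pa$ and reducing to a one-dimensional ODE obstruction, it constructs a compactly supported \emph{sub}-solution
\[
w(x_1,y)=\epsi\, e^{\sigma x_1}\cos\!\big(\tfrac{\pi}{2L}x_1\big)\,\psi^R(y)\quad\text{on }(-L,L)\times B_R,
\]
where $\psi^R$ is the Dirichlet principal eigenfunction on $B_R$ with eigenvalue $\la^R$ close to $\la$, and $\sigma+i\tfrac{\pi}{2L}$ is a complex root of $X^2+cX-\la^R-\delta=0$ (such roots exist precisely because $c<2\sqrt{-\la}$). One then slides the front $u_\tau(x_1,y)=u(x_1+\tau,y)$ to the right; since $u\to 0$ at $+\infty$ there is a finite $\tau^*$ at which $u_{\tau^*}\ge w$ with contact at an interior point of $\supp w$, and the strong maximum principle forces $u_{\tau^*}\equiv w$ there, impossible on the boundary. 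Your approach is arguably more transparent for this single proposition, but the paper's compact bump sub-solution is a reusable object: it is invoked again verbatim in Theorem~\ref{existz} and in Proposition~\ref{existv} to control the spreading speed, so the construction pays for itself several times over.
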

\begin{proof}
We argue by contradiction and assume that there exists a traveling front $u$ of
speed $c<2\sqrt{-\la}$ of \eqref{FPc}. 
We are going to construct a
small positive sub-solution with compact support.
To this end, we can find $\delta\in (0, f'(0))$ such that
$c^2+4(\la+2\delta)<0$  and
$\eta>0$ such that for all $s\in [0,\eta]$, $f(s)\geq (f'(0)-\delta) s$.

Since the linearized operator $L=-\Delta +\alpha g(y)-f'(0)$ is
self adjoint, the principal eigenvalue $\la$ is the limit of the
Dirichlet principal eigenvalue in $B_R$ when $R\rightarrow \infty$ (see \cite{HBLR06} for more details):
\begin{equation}
\label{vpR}
\begin{cases}
-\Delta \psi^R+\alpha g(y) \psi^R-f'(0)\psi^R=\la^R \psi^R, \quad y\in
B_R, \\
\psi^R>0 \quad \text{on } B_R, \quad \psi^R=0 \quad \text{on } \partial B_R.
\end{cases}
\end{equation}
Precisely $\la^R >\la$ and $\la^R \xrightarrow{R\rightarrow \infty}
\la$. In the following, $\psi^R$ denotes the positive eigenfunction with
$\|\psi^R\|_\infty=1$ and let us fix $R$ sufficiently large so that
$\la <\la^R<\la+\delta$.  

Let $\tilde{\sigma}=\sigma+i \frac{\pi}{2L}$, $L>0$, be an imaginary root of 
$
X^2+c X -\la^R -\delta =0
$
which is possible since $c^2 +4(\la^R+\delta) <c^2 +4(\la +
2\delta)<0$. Finally let us fix $\ep>0$
small enough such that $\ep e^{\sigma x_1}<\eta$ and $\ep e^{\sigma
  x_1} \psi^R(y)<u(x_1,y)$ for $x\in [-L,L]$ and $y\in \overline{B}_R$.
We set 
\begin{equation}
\label{constrss}
w(x_1,y)= \begin{cases}
\ep e^{\sigma x_1} \cos(\frac{\pi}{2L}x_1) \psi^R(y)  \text{ if }
-L<x_1<L, \; y\in B_R, \\
0  \text{ otherwise.}
\end{cases}
\end{equation}
Then $w$ verifies
$$
-\Delta w - c\partial_1 w+\alpha g(y) w=(f'(0)-\delta)w \leq f(w)
$$
since $0 \leq w \leq \eta$. Moreover $w\leq
u$ and $w>0$ on $(-L,L)\times B_R$. Thus $w$ is a generalized sub-solution
with compact support \cite{BL80}.

Let us now derive a contradiction with the existence of a traveling
front $u$. Translate $u$ to the left by defining
$u_\tau(x_1,y)=u(x_1+\tau,y)$ for $\tau>0$.  Since $u(x_1,.)
\xrightarrow{x_1\rightarrow +\infty} 0$, there exists $\tau^*\geq
0$ such that $u_{\tau^*}\geq w$ but
$u_{\tau^*}(x_1^*,y^*)=w(x_1^*,y^*)$. Since $u_{\tau^*}>0$, $x_1^*\in (-L,L)$
and $y^*\in B_R$ (an interior point of the support of $w$). Now since
$w$ is a sub-solution, the strong maximum principle yields
$u_{\tau^*}\equiv w$ on $[-L,L]\times \overline{B_R}$, but this is
impossible on the boundary. 
\end{proof}

We have already proved that for $c<2\sqrt{-\la}$, there exists no traveling front of speed
$c$ solution of \eqref{FPc} and that for $c=\cu=2\sqrt{-\la}$ there
exists a traveling front of speed $c$. Let us prove that for any $c>\cu$
there exists at least a traveling front to conclude with Theorem \ref{existc*}. The proof goes as usual. We consider
the following problem
\begin{equation}
\begin{cases}
-\Delta u - c\partial_1 u + \alpha g(y) u=f(u), \quad  x=(x_1,y)\in
(-a,a)\times \R^{N-1} \\
u(-a,\cdot)=u^*(-a+r,\cdot), \quad u(a,\cdot)= u^*(a+r,\cdot)
\end{cases}
\label{FPar}
\end{equation}
where $u^*$ is the traveling front of speed $c^*$.
The function $u^*(\cdot+r,\cdot)$ is a strict super-solution of \eqref{FPar}
(since $c>c^*$) when 0 is a strict sub-solution and $0<u^*(\cdot+r,\cdot)$. Hence as in
theorem \ref{boite}, it can be proved that there exists a unique
solution $v_a^r$ of \eqref{FPar} and moreover $\partial_x w_a^r<0$ and
$$\forall (x_1,y)\in [-a,a]\times \R^{N-1} \quad V(y)>u^*(-a+r,y)\geq
v_a^r(x_1,y)\geq u^*(a+r,y)>0.$$ 
By uniqueness, $w_a^r$ depends continuously on $r\in \R$, so
$w_a^r \xrightarrow{r\rightarrow +\infty}0$ and
$w_a^r \xrightarrow{r\rightarrow -\infty} V$ uniformly on
$[-a,a]\times \R^{N-1}$. Let us denote $u_a=v_a^r$ where $r$ is chosen
in order that $v_a^r(0,0)=\theta$ (see previous section for definition
of $\theta$). Once again taking any sequence $a_n\rightarrow +\infty$,
up to an extraction $u_{a_n} \rightarrow u$ in $\CC^2_{loc}$ and $u$ is
a traveling front of speed $c$ solution of \eqref{FPc}.


\section{The case of a  Fisher-KPP non-linearity. Asymptotic speed of spreading.}
\label{section: convergence}

This section is concerned with the asymptotic behavior of the
solutions of the parabolic problem
\begin{equation}
\label{parabolique}
\begin{cases}
\partial_t u -\Delta u =f(u) -\alpha g(y) u \quad
&\text{on } \R\times \R^N\\
u(0,x)=u_0(x) & \text{on } \R^N
\end{cases}
\end{equation}
where $f$ is Fisher-KPP and $u_0$ is an initial condition at least bounded.

\subsection{Extinction for $\alpha\geq \alpha_0$}

Let us fix $\alpha \geq \alpha_0$. We recall that there is no positive
asymptotic profile of \eqref{PA}. 
\begin{theorem}
For $u_0\in L^\infty$, there exists a unique solution $u(t,x)$ of
\eqref{parabolique} and it verifies $u(t,x) \xrightarrow{t\rightarrow
  +\infty} 0$ uniformly for $x\in \R^N$.
\end{theorem}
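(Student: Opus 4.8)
The plan is to establish existence–uniqueness by standard parabolic theory and then prove extinction by trapping $u$ above a supersolution that decays to $0$. Existence and uniqueness of a bounded solution of \eqref{parabolique} with bounded initial datum follow from classical parabolic theory: the nonlinearity $f(u)-\alpha g(y)u$ is locally Lipschitz in $u$, and since we only care about solutions in $[0,1]$ (using $f(s)\le 0$ for $s\ge 1$ and the assumption $f(0)=0$ together with the maximum principle) one gets global existence, the comparison principle, and a bound $0\le u\le M$ for $M=\max(1,\|u_0\|_\infty)$. I would only sketch this. The real content is the convergence to $0$.

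For the extinction statement, the key point is that for $\alpha\ge\alpha_0$ the principal eigenvalue $\la\ge 0$ of the linearized operator $L\varphi = -\Lap\varphi + (\alpha g(y)-f'(0))\varphi$ is nonnegative (Corollary \ref{alpha0}). First I would use the KPP hypothesis $f(s)\le f'(0)s$ to dominate: any solution satisfies
$$
\partial_t u - \Lap u = f(u)-\alpha g(y)u \le f'(0)u - \alpha g(y)u,
$$
so $u$ is a subsolution of the linear equation $\partial_t w - \Lap w = (f'(0)-\alpha g(y))w$. Then I would build an explicit supersolution of the linear problem using the principal eigenfunction. When $\la>0$ (i.e. $\alpha>\alpha_0$) one simply takes $w(t,y)=Ke^{-\la t}\pa(y)$; however $\pa$ is only in $\HH$ and in the unweighted $L^2$ sense, not $L^\infty$, so I need a little care to dominate a bounded initial datum. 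The fix is to note $\pa(y)\to\infty$ is false — rather $\pa$ has Gaussian-type decay (Corollary \ref{decry} applied to the eigenfunction equation gives $\pa(y)\le Ce^{-\gamma|y|}$ for every $\gamma$), so $\pa$ is bounded *below away from zero only on compacts*. To handle the region where $u_0$ is of order $1$ but $\pa$ is tiny, I would instead compare on a large ball $B_R$: on $\partial B_R\times(0,\infty)$ and at $t=0$ use the decay of $g$ (namely $\alpha g(y)>f'(0)$ for $|y|\ge R_0$) to get a clean supersolution $Me^{-\mu(|y|-R_0)}$ type barrier outside a ball, and inside the ball use $e^{-\la t}$ decay (or, if $\la=0$, the Dirichlet eigenvalue $\la^R$ which is strictly positive, cf. \eqref{vpR}, and let $R\to\infty$ at the end). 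Patching these gives $u(t,y)\to 0$ uniformly.

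The main obstacle is the borderline case $\alpha=\alpha_0$, where $\la=0$ exactly, so a naive supersolution $e^{-\la t}\pa$ does not decay. Here the idea is to exploit the strictness in the KPP condition ($f(s)<f'(0)s$ on $(0,1)$) together with the fact that the Dirichlet principal eigenvalue $\la^R$ on $B_R$ is \emph{strictly positive} for every finite $R$ when $\la=0$ (by strict monotonicity of Dirichlet eigenvalues in the domain). Concretely: fix $R$ large, let $\psi^R>0$ solve \eqref{vpR} with $\la^R>0$; outside $B_R$ choose $R$ also large enough that $\alpha g(y)-f'(0)\ge \kappa>0$ for $|y|\ge R$. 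Then $w(t,y)=e^{-\mu t}\bigl(M\mathbf 1_{|y|\ge R} + \text{(harmonic-type matching inside)}\bigr)$ with $\mu=\min(\la^R,\kappa)/2$ can be arranged to be a supersolution dominating $u_0$; uniform convergence of $u$ to $0$ on $\R^N$ follows. I would present the argument for $\alpha>\alpha_0$ cleanly first and then indicate this refinement for the equality case, flagging that one passes through finite-$R$ approximations. The uniformity in $x$ comes for free from the exponential-in-$t$ supersolution being independent of any reference point.
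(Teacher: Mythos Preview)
Your approach is quite different from the paper's and runs into a real obstruction in the borderline case. The paper does not linearize: it lets $w(t,y)$ solve the full nonlinear equation \eqref{parabolique} with constant initial datum $S=\max(1,\|u_0\|_\infty)$. Since neither the equation nor the datum depends on $x_1$, $w$ is a function of $(t,y)$ only; since $S$ is a super-solution, $t\mapsto w(t,\cdot)$ is nonincreasing and hence converges to some $W(y)\ge 0$, which by parabolic estimates solves the profile equation \eqref{PA}. For $\alpha\ge\alpha_0$ the only such profile is $0$, so $W\equiv 0$, and comparison gives $0\le u\le w\to 0$. The strict KPP inequality enters only through the already-proved nonexistence of positive profiles, and the cases $\alpha>\alpha_0$ and $\alpha=\alpha_0$ are handled simultaneously.

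Your linear-supersolution route has a genuine gap at $\alpha=\alpha_0$. You propose a barrier of the form $w(t,y)=e^{-\mu t}\phi(y)$ with $\mu>0$ and $\phi$ bounded, $\phi\ge M$; such a $w$ would be a supersolution of the \emph{linear} equation $\partial_t v-\Delta v=(f'(0)-\alpha g)v$. But when $\la=0$ no such $\phi$ can exist. Normalize the eigenfunction so that $\pa\le M\le\phi$; then $\pa$ is a stationary solution of that linear equation, and parabolic comparison (valid here since the zero-order coefficient is bounded below and both functions are bounded) gives $\pa\le e^{-\mu t}\phi\to 0$, a contradiction. In other words, for $\la=0$ the linear problem carries a positive stationary state, so no supersolution of the linear equation can both dominate a bounded datum and decay to zero. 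You correctly flag that the strict inequality $f(s)<f'(0)s$ must be used, but your concrete construction via the Dirichlet eigenvalues $\la^R$ and the ``harmonic-type matching'' never actually uses it---the patched barrier you describe is still a supersolution of the linear equation, hence ruled out by the argument above. To treat $\alpha=\alpha_0$ you are forced back to the nonlinear equation, which is exactly what the paper's monotone-convergence argument does, at much lower cost.
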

This section is devoted to the proof of this theorem. 

Let us fix $S=\max(1,\|u_0\|_{\infty})$. Then the constant functions
$0$ and $S$ are respectively sub- and super-solutions of
\eqref{parabolique}. Thus there exists $u(t,x)$ a solution of
\eqref{parabolique} such that $0\leq u\leq S$. By the parabolic
maximum principle, this solution is unique. 

Let us define $w$ the solution of \eqref{parabolique} with the
initial condition $w(0,x)=S$. Since the problem and the initial
condition do not depend on $x_1$, neither does $w$ thus we will write
$w(t,y)$. By the 
maximum principle, $0\leq u\leq w\leq S$ and since $S$ is a
super-solution, $\partial_t w\leq 0$. Thus 
$w(t,y)\xrightarrow{t\rightarrow +\infty} W(y)$ and 
$$
0\leq \limsup_{t\rightarrow +\infty} u \leq W\leq S.
$$ 
Now by parabolic local estimates, $W$ is necessarily solution of 
$$
-\Delta_y W=f(W)-\alpha g(y) W
$$
and thus is a nonnegative asymptotic profile. Since $\alpha \geq
\alpha_0$, $W\equiv 0$. So $u(t,x)$ tends to 0 for $t\rightarrow +\infty$
uniformly in $\R^{N}$. 

\subsection{Spreading for $\alpha < \alpha_0$}

In this section we assume $\alpha<\alpha_0$. So there exists a critical
speed $c^*$ of existence of traveling front for \eqref{FPc}. We assume
that $u_0\in \CC^0_0(\R^N)$, i.e. $u_0$ is continuous and compactly
supported, and that $u_0<V$ where $V$ is the positive asymptotic
profile solution of \eqref{PA}. We will prove the spreading
of the solution of \eqref{parabolique} but we first need the following theorem.

\begin{theorem}
\label{existz}
The unique solution of 
\begin{equation}
\label{uniq}
\begin{cases}
-\Delta z -c \partial_1 z+\alpha g(y) z=f(z) \quad (x_1,y)\in \Rn,\\
0< z(x_1,y) \leq V(y) \quad (x_1,y)\in \Rn.
\end{cases}
\end{equation}  
with $c<c^*$ is $z(x,y) \equiv V(y)$.
\end{theorem}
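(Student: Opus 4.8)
The plan is to show that any solution $z$ of \eqref{uniq} with $c<c^*=2\sqrt{-\la}$ must be independent of $x_1$, hence (being a solution of \eqref{PA} lying strictly between $0$ and $V$) must equal the unique positive asymptotic profile $V$. The natural way to force $x_1$-independence is a sliding argument in the $x_1$-direction: for $h\in\R$ set $z_h(x_1,y)=z(x_1+h,y)$, which is again a solution of the same equation, and compare $z$ with $z_h$. If one can show $z_h\le z$ for every $h\ge 0$ (and by symmetry $z_h\le z$ for $h\le 0$ as well, using that the equation is translation invariant in $x_1$), then $z$ is constant in $x_1$ and we are done.

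First I would record the tools needed for the sliding: the comparison principle of Proposition~\ref{comparison} in the region $\{|y|>R\}$ where $\alpha g(y)>K$, together with Corollary~\ref{decry} and the bound $z\le V$, which guarantees that $z$ and every translate $z_h$ decay uniformly to $0$ as $|y|\to+\infty$. This is exactly the setup that let the sliding method work on the bounded-in-$x_1$ domains in Theorem~\ref{boite}: an infimum of admissible shifts $h^*=\inf\{h\ge 0:\ z_h\le z \text{ on }\Rn\}$ is well defined once we know $z_h\le z$ for $h$ large, and the contact-point/strong-maximum-principle dichotomy closes the argument. So the real content is: (i) starting the sliding (i.e. $z_h\le z$ for large $h$), and (ii) ruling out $h^*>0$.

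For step (ii), once $h^*>0$, by continuity $z_{h^*}\le z$ on $\Rn$; if the two touch at an interior point the strong maximum principle applied to the linear elliptic equation satisfied by $z-z_{h^*}$ forces $z\equiv z_{h^*}$, i.e. $z$ is $h^*$-periodic in $x_1$; combined with $\partial_1 z\le 0$ (which we get, as in Theorem~\ref{boite}, by differentiating the equation — or rather which must be established here) this already yields $x_1$-independence. If they do not touch in the slab over $\overline{B_R}$, one pushes $h^*$ down slightly, using Proposition~\ref{comparison} on $\{|y|>R\}$, contradicting minimality. The subtle point, and the one I expect to be the main obstacle, is step (i): the domain is now all of $\Rn$, so there is no ``boundary at $x_1=\pm a$'' to get the comparison started the way Theorem~\ref{boite} did. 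This is precisely where the hypothesis $c<2\sqrt{-\la}$ must enter. The idea is that $z$ cannot decay too fast as $x_1\to+\infty$: linearizing the equation near $z=0$ gives $-\Lap\zeta - c\partial_1\zeta + \alpha g\zeta = f'(0)\zeta$, whose separated sub-solutions $e^{\mu x_1}\pa(y)$ require $\mu^2+c\mu = -\la$, and since $c^2+4(-\la)\cdot(-1)<0$... — more precisely since $c<2\sqrt{-\la}$ the relevant characteristic roots are complex, so $z$ must in fact be bounded below along sequences $x_1\to+\infty$ in a way incompatible with $z(x_1,\cdot)\to 0$, unless... Rather than exploit a lower bound, the cleaner route — mirroring Proposition~\ref{minc*} — is: suppose the conclusion fails, so $z\not\equiv V$; then $z(x_1,\cdot)\to 0$ uniformly as $x_1\to+\infty$ (since the $x_1$-limit $z_+$ is a solution of \eqref{PA} below $V$ and, being nonmaximal — here one needs that $z_+\equiv V$ would force $z\equiv V$ by a sliding/strong-maximum argument — must vanish). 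Then build the compactly supported generalized sub-solution $w(x_1,y)=\epsi e^{\sigma x_1}\cos(\tfrac{\pi}{2L}x_1)\psi^R(y)$ exactly as in \eqref{constrss}, where $\sigma\pm i\frac{\pi}{2L}$ solves $X^2+cX-\la^R-\delta=0$ (possible since $c^2+4(\la^R+\delta)<0$ for $R$ large and $\delta$ small), slide $z$ to the left until it first touches $w$ from above at an interior point of $\supp w$, and invoke the strong maximum principle to conclude $z\equiv w$ there — impossible at the boundary of the support. This contradiction gives $z\equiv V$.

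In summary: the skeleton is (1) reduce to showing $z$ is $x_1$-independent, or equivalently to showing $z\not\equiv V$ is impossible; (2) if $z\not\equiv V$, get uniform decay $z(x_1,\cdot)\to 0$ as $x_1\to+\infty$; (3) reuse the compactly supported sub-solution construction from the proof of Proposition~\ref{minc*}, whose validity hinges on $c<2\sqrt{-\la}$; (4) conclude by translating $z$ into contact with it and applying the strong maximum principle. The hard part is genuinely step (2)–(3): making precise that the $x_1$-limit of $z$ is the \emph{trivial} profile (this is where one must exclude $z_+\equiv V$, via monotonicity $\partial_1 z\le 0$ plus the strong maximum principle, just as in Proposition~\ref{ainfty}) and verifying the sub-solution inequality $-\Lap w-c\partial_1 w+\alpha g w\le f(w)$ on the support, which is the same computation as in Proposition~\ref{minc*}.
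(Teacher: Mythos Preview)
Your proposal has a genuine gap. The hypothesis of Theorem~\ref{existz} is merely that $0<z\le V$; there is no monotonicity assumption on $z$ in $x_1$, and hence no a~priori reason for the limits $z_\pm(y)=\lim_{x_1\to\pm\infty}z(x_1,y)$ to exist. You recognize this yourself when you flag step~(i) of the sliding of $z$ against its own translates as ``the main obstacle'', but your fallback argument still relies on it: you write ``via monotonicity $\partial_1 z\le 0$ plus the strong maximum principle, just as in Proposition~\ref{ainfty}''. In Proposition~\ref{ainfty} the monotonicity was inherited from the construction (each $u_a^{c_a}$ was monotone), whereas here $z$ is an \emph{arbitrary} positive solution below $V$. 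Without monotonicity you cannot conclude that $z(x_1,\cdot)\to 0$ as $x_1\to+\infty$, so the ``slide $z$ until it touches $w$'' step of Proposition~\ref{minc*} never gets off the ground.

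The paper's route avoids this entirely by reversing the logic. Rather than trying to push $z$ down to $0$ and reach a contradiction, it pushes $z$ \emph{up} to $V$. Build the same compactly supported sub-solution $w$ as in \eqref{constrss}; since $z>0$ everywhere and $\supp w$ is compact, a small enough $\epsi$ gives $w\le z$. Now slide $w$, not $z$: for every $\tau\in\R$, $w^\tau(x_1,y)=w(x_1+\tau,y)$ is still a sub-solution, and a first-touching argument (exactly the one you describe) shows $w^\tau\le z$ for \emph{all} $\tau$. Hence $\underline{z}(y):=\inf_{x_1\in\R}z(x_1,y)\ge\sup_\tau w^\tau(x_1,y)>0$ on $B_R$, so $\underline{z}\not\equiv 0$. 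Since $\underline{z}$ is an infimum of solutions it is a super-solution of the profile equation~\eqref{PA}; a small multiple of $\psi^R$ (or $\pa$) furnishes a positive sub-solution below it, and the monotone iteration together with the uniqueness of the positive profile yields $V\le\underline{z}\le z\le V$. The key idea you are missing is precisely this use of $\underline{z}$: it replaces the unavailable $x_1$-limits and makes the monotonicity question moot.
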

\begin{proof}
Let us consider the generalized sub-solution with compact support $w(x_1,y)$ defined in \eqref{constrss}. This is possible since $c<c^*$. Up to a decrease of $\ep>0$, we can assume that
$w\leq z$ on $\R^N$. Now by applying the sliding method to $w^\tau$ where $w^\tau(x_1,y)=w(x_1+\tau,y)$ and $z$, one can prove that $w^\tau \leq z$ for all $\tau \in \R$. We can thus define 
$$
\forall y\in \R^{N-1} \quad \underline{z}(y)=\inf_{x_1\in \R} z(x_1,y) \geq 0 
$$ 
and state that $ z\not\equiv 0$.
Now $\underline{z}$ is a super-solution of \eqref{PA} since $\underline{z}=\inf_{h\in \R} z(\cdot+h,\cdot)$ and an infimum of solutions  is a super-solution.

Finally as in section \ref{section: asympt}, we can build a positive sub-solution of \eqref{PA} smaller than $\underline{z}$ and thus by monotone iteration we have a solution of \eqref{PA} between these sub- and super-solution. By uniqueness of the positive solution, we obtain $V\leq z $. And due to condition in \eqref{uniq}, we have $z\equiv V$.
\end{proof}

Let us now turn to the precise study of the spreading of the solution of \eqref{parabolique}
\begin{theorem}
\label{thmcv}
For $u_0\in \CC^0_0(\R^N)$ with $u_0< V$, there exists a unique
solution $u$ of \eqref{parabolique} and 
\begin{eqnarray}
\label{cgrd}
\text{for any }c>c^* \quad \lim_{t\rightarrow +\infty} \sup_{|x_1|\geq ct}
u(t,x)=0, \\
\label{cpt}
\text{for any } c \text{ with } 0\leq c <c^* \quad \lim_{t\rightarrow +\infty} \sup_{|x_1| < ct} |u(t,x)-V(y)|=0.
\end{eqnarray}
\end{theorem}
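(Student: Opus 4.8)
The plan is to establish the two limits separately by constructing suitable super- and sub-solutions and invoking the parabolic maximum principle, together with the non-existence result of Theorem \ref{existz}. Existence and uniqueness of $u$ on $(0,+\infty)\times\R^N$ is standard: since $0$ is a subsolution, and $\max(1,\|u_0\|_\infty)\geq V$ is a supersolution (using $f(s)\leq 0$ for $s\geq 1$ and $g\geq 0$), the solution exists with $0\leq u\leq\max(1,\|u_0\|_\infty)$, and uniqueness follows from the comparison principle. In fact, since $u_0<V$ and $V$ is a stationary solution of \eqref{parabolique}, the maximum principle gives $0\leq u(t,x)< V(y)$ for all $t>0$.

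For the upper bound \eqref{cgrd}: fix $c>c^*=2\sqrt{-\la}$. I would use the traveling front $u^*$ of speed $c$ from Theorem \ref{existc*} (or more simply an explicit exponential supersolution of the form $Ce^{-\mu(x_1-ct)}\pa(y)$ with $\mu$ chosen so that $\mu^2-c\mu+\la<0$, which exists precisely because $c^2+4\la>0$) to build, for a suitable constant $C$ and shift, a supersolution $\overline{u}(t,x)$ of \eqref{parabolique} dominating $u_0$ at $t=0$ and decaying to $0$ in the region $x_1\geq ct$; by symmetry in $x_1\mapsto -x_1$ one treats $x_1\leq -ct$ as well. Then $u(t,x)\leq\overline{u}(t,x)$ and the right-hand side tends to $0$ uniformly on $|x_1|\geq ct$. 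The point is that a front (or exponential profile) of speed $c$ travels strictly faster than the natural spreading speed can push the solution, so $u$ is squeezed down behind it.

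For the lower bound \eqref{cpt}, which is the main obstacle: fix $0\leq c<c^*$. One must show $u(t,x)\to V(y)$ uniformly on $|x_1|<ct$. I would argue by a contradiction/compactness argument. First, using the small compactly supported subsolution $w$ from \eqref{constrss} (available for any speed strictly below $c^*$) together with a subsolution that grows near $y=0$ coming from $\underline V=\epsi\pa$, one shows that $u(t,\cdot)$ stays bounded below away from $0$ on compact sets for large $t$, i.e.\ $u$ does not die out. Then suppose \eqref{cpt} fails: there are $\delta>0$, times $t_n\to+\infty$, and points $(x_1^n,y^n)$ with $|x_1^n|<ct_n$ and $|u(t_n,x_1^n,y^n)-V(y^n)|\geq\delta$. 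Translating, $u_n(t,x):=u(t+t_n,x_1+x_1^n,y)$ satisfies the same equation; by parabolic estimates a subsequence converges in $\CC^{1,2}_{loc}$ on $\R\times\R^N$ to an entire solution $u_\infty$ with $0\leq u_\infty\leq V$. The shift being strictly sublinear in $t$ (since $|x_1^n|/t_n<c<c^*$) is what lets one show, via comparison with the moving subsolutions $w^\tau$ for speeds slightly above $c$, that $u_\infty$ is bounded below by a positive constant on every compact set, hence $u_\infty>0$; and a standard energy/monotonicity or steepness argument forces $u_\infty$ to be independent of $t$, hence a solution $z(x_1,y)$ of \eqref{uniq} with some speed $|c'|\leq c<c^*$. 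By Theorem \ref{existz}, $z\equiv V$, so $u_\infty\equiv V$, contradicting $|u_\infty(0,0,\cdot)-V|\geq\delta$ at the limit point. This yields \eqref{cpt}.

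The delicate step is establishing the uniform positive lower bound on $u_\infty$ (equivalently, that the limiting entire solution is nontrivial) when the observation points drift with a nonzero speed $c\in(0,c^*)$; this is exactly where one needs sharp control of the spreading speed, supplied by the family of compactly supported subsolutions $w$ of speeds arbitrarily close to $c^*$ from below, which can be propagated by the sliding method to chase the point $(x_1^n,y^n)$. The reduction to $t$-independence of $u_\infty$ is the other point requiring care; here I would invoke the monotonicity $\partial_1 u_a^c<0$ inherited in the limit, or alternatively the gradient-flow/energy structure, to conclude that the entire solution must be a (possibly stationary) traveling wave and then apply Theorem \ref{existz}.
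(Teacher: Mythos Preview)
Your argument for \eqref{cgrd} is essentially the paper's, with one small correction: a front or exponential supersolution of speed \emph{exactly} $c$ does not decay on $\{x_1=ct\}$ (it equals a fixed positive profile there). You must use a speed $c''\in[c^*,c)$, so that on $\{x_1\geq ct\}$ the supersolution is bounded by $U((c-c'')t-L,y)\to 0$. The paper does precisely this with $c''=c^*$.

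For \eqref{cpt} your compactness/contradiction outline is a legitimate strategy, but the step ``$u_\infty$ is independent of $t$'' is a genuine gap. The monotonicity $\partial_1 u_a^c<0$ pertains to the fronts constructed in Section~\ref{section: cetoile}, not to the Cauchy problem with compactly supported data, whose solution is certainly \emph{not} monotone in $x_1$; nothing of this kind is inherited by $u_\infty$. The energy/gradient-flow suggestion is likewise inoperative: the natural energy is infinite on $\R^N$, and entire solutions of gradient flows are not forced to be equilibria without additional structure. What would close your argument is to bypass $t$-independence and show directly that $u_\infty\equiv V$: since $u_\infty$ is an \emph{ancient} positive solution, for any $t_0$ and any $T>0$ you can slide a compactly supported stationary subsolution $w$ of \eqref{constrss} below $u_\infty(t_0-T,\cdot)$, and the monotone evolution $\tilde w$ issued from $w$ gives $u_\infty(t_0,\cdot)\geq\tilde w(T,\cdot)\to V$ as $T\to\infty$ by Theorem~\ref{existz}. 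You also need to dispose of the case $|y^n|\to\infty$ (easy, since $0\leq u\leq V$ and $V(y)\to 0$) and to secure $u_\infty>0$ first; the latter does follow from your moving-subsolution idea at speed $c'\in(c,c^*)$, but needs to be written out.

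The paper takes a different, constructive route for \eqref{cpt}. It first proves the pointwise statement $u(t,x_1-ct,y)\to V(y)$ for each fixed $(x_1,y)$ and each $|c|<c^*$ (Lemma~\ref{cvp}), and then upgrades to uniformity by building, for $\tilde c\in(c,c^*)$ and large $R$, a monotone solution $v_{\tilde c}$ on the half-cylinder $(-\infty,0]\times B_R$ with Dirichlet conditions, vanishing at $x_1=0$ and tending to the Dirichlet profile $V^R$ at $-\infty$ (Proposition~\ref{existv}). Once the pointwise lemma yields $u(t,0,y)>V^R(y)$ for $t\geq t_0$, the shifted $v_{\tilde c}(x_1-\tilde c(t-t_0),y)$ is a subsolution in $\{x_1>0\}\times B_R$ lying below $u$ on the parabolic boundary, forcing $u\geq V^R$ uniformly on $\{0\leq x_1\leq ct\}$; letting $R\to\infty$ concludes. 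This avoids any Liouville-type classification of entire solutions; your approach, once repaired as above, would be shorter but rests on exactly such a classification.
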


\begin{proof}
Fix $c>c^*$. Let $U$ denote a traveling front of speed $c^*$. Since
$U(x_1,\cdot)\rightarrow V$ for $x_1\rightarrow -\infty$ locally
uniformly, there exists $L\in \R$ such that $U(x_1-L,y) >u_0(x_1,y)$
for all $(x_1,y)\in \R^N$. Now considering $v(t,x)=U(x_1-L-c^*t,y)$
and applying the comparison principle, we have
$u(t,x)\leq v(t,x)$ for all $t\geq 0$ and $x\in \R^N$.
Thus since $U$ is decreasing in $x_1$
$$
\sup_{|x_1|>ct} u(t,x)\leq \sup_{|x_1|>ct} U(x_1-L-c^* t,y) = \sup_{y\in \R^{N-1}} U((c-c^*)t-L,y).
$$  
Since $c>c^*$ and $U(x_1,y)\xrightarrow{x_1\rightarrow +\infty} 0$ uniformly in $y\in \R^{N-1}$. We see that $\sup_{x_1\geq ct} u(t,x) \rightarrow 0$ as $t\rightarrow +\infty$. Since $u(t,-x_1,y)$ satisfies the same equation \eqref{parabolique}, this shows that $\sup_{x_1\leq -ct}  u(t,x)  \rightarrow 0$ as well as $t\rightarrow +\infty$. Thus \eqref{cgrd}
is proved. 

Assume now $c<c^*$. Let us first prove the following weaken version of \eqref{cpt}:
\begin{lemma}
\label{cvp}
For any $c\in \R$ with $|c|\leq c^*$, 
\begin{equation}
\label{cptw}
\forall (x_1,y)\in \R^N \quad \lim_{t\rightarrow +\infty} |u(t,x_1-ct,y)-V(y)| =0.
\end{equation}
\end{lemma}
\begin{proof}[Proof of lemma \ref{cvp}]
Let us assume that $c\geq 0$, the proof being similar for $c\leq 0$.

Let $v(t,x_1,y)=u(t,x_1-ct,y)$. Then $v$ satisfies the equation
\begin{equation}
\label{pardec}
\partial_t v-\Delta v -c \partial_1 v+\alpha g(y)v=f(v)
\end{equation}
with the initial datum $v(0,x_1,y)=u_0(x,y) \geq 0$ and $\not \equiv
0$. Hence by the parabolic maximum principle, for all $(x_1,y)\in \Rn$
$v(1,x_1,y)>0$. Now since $c< c^*$, in \eqref{constrss}, we constructed $w(x_1,y)\geq 0$ a stationary non-zero
sub-solution of \eqref{pardec} with compact support and $w$ could be
chosen arbitrary small. Hence we can assume $w\leq v(1,\cdot,\cdot)$. So if $\tw$
is the solution of 
$$
\begin{cases}
\partial_t \tw - \Delta \tw -c \partial_1 \tw +\alpha g(y) \tw=f(\tw)
 \quad t>0, \; (x_1,y)\in \Rn\\
\tw(0,x_1,y)=w(x_1,y) \quad (x_1,y)\in \Rn
\end{cases}
$$ 
then by comparison principle, $\forall t\geq 1$ $\forall (x_1,y)\in
\Rn$ $v(t,x_1,y)\geq \tw(t-1,x_1,y)$. Now since $w$ is a sub-solution,
$\tw$ is increasing with respect to $t$ and $0\leq \tw(t,x_1,y) \leq
V(y)$. Therefore, by standard elliptic estimates,
$\tw(t,x_1,y) \xrightarrow{t\rightarrow +\infty} z(x,y)$ and $z$ is a
solution of \eqref{uniq}. By theorem \ref{existz}, we have $z\equiv V$ and   
this complete the proof of the lemma since by the
comparison principle $\tw(t-1,x_1,y)\leq v(t,x_1,y)\leq V(y)$ thus 
$$
\lim_{t\rightarrow +\infty} v(t,x_1,y)=V(y)
$$
which yields \eqref{cptw}.
\end{proof}
Let us now prove \eqref{cpt}, that is the uniform convergence to $V$ in the expanding slab $\{x_1 \leq ct\}$. We will only prove it for $0\leq x_1\leq ct$. Indeed using as before $u(t,-x_1,y)$, the general result follows from the convergence in the set $\{0\leq x_1\leq ct\}$.

Let $c$ with $0<c<c^*$ be fixed and let $\ep>0$ be given (arbitrarily small). For $R>0$ sufficiently large, we know that the principal eigenvalue $\la^R$ of the problem \eqref{vpR} above is such that $\la^R<0$. Denote by $\psi^R>0$ the corresponding eigenfunction of \eqref{vpR}. Under these conditions we know that there exists a unique solution $V^R(y)>0$ of the profile equation in $B_R$ with Dirichlet condition:
\begin{equation}
\label{PAR}
\begin{cases}
-\Delta V^R+\alpha g(y) V^R=f(V^R) \qquad \text{in } B_R \\
V^R=0 \quad \text{on } \partial B_R, \qquad V^R>0 \quad \text{in } B_R. 
\end{cases}
\end{equation}  
(Compare e.g. \cite{HB81}). Moreover, it is straightforward  to show that $V^R$ is increasing with $R$ and that $\lim_{R\rightarrow +\infty} V^R(y)=V(y)$.

Let us choose $R>0$ sufficiently large so that 
for all $y\not\in B_R$ $V(y)<\ep $ and for all $y\in \overline{B_R}$ $0<V(y)-V^R(y)<\ep$. The proof of the uniform convergence to $V$ for $c<c^*$ will rest on the following Proposition.
\begin{proposition}
\label{existv}
Let $c$ be such that $0<c<c^*$. Then, with $R$ chosen as above, there exists a solution $v_c(x_1,y)$ defined for $x_1\in \R^-$, $y\in \overline{B_R}$ of equation
\begin{equation}
\label{Pv}
-\Delta v-c\partial_1 v +\alpha g(y)v=f(v) \quad x_1\leq 0, \; y\in \overline{B_R}
\end{equation}
satisfying the following properties:
$$
\begin{cases}
v_c>0 \text{ and } \partial_1 v_c<0 \text{ in } \R_*^- \times B_R, \\
v_c(0,y)=0 \text{ for } y\in \overline{B_R}, \\
v_c(x_1,y)=0 \text{ for } y\in \partial B_R, \; x_1\leq 0, \\
v_c(-\infty,y)=V^R(y)  \text{ for } y\in \overline{B_R}.
\end{cases}
$$ 
\end{proposition}
Postponing the proof of this proposition, let us complete the proof of Theorem \ref{thmcv}. Extending $v_c$ by $0$ for $x_1\geq 0$ turns $v_c$ into a (generalized) sub-solution of equation \eqref{Pv} in the cylinder $\R \times B_R$ (see \cite{BL80}).
Therefore $v_c(x_1-c(t-t_0),y)$ is a sub-solution of the equation \eqref{parabolique} in this cylinder for all $t_0 \geq 0$ and all $c\in (0,c^*)$.

By Lemma \ref{cvp} (applied here in the case $c=0$), we can fix $t_0>0$ sufficiently large such that for $t\geq t_0$ we have
$$
u(t,0,y)\geq V(y)-\frac{\delta}{2} \quad \text{for all } y\in \overline{B_R}
$$ 
where $\displaystyle \delta =\min_{\overline{B_R}} (V-V^R)>0$. Therefore,
$$
u(t,0,y)>V^R(y) \quad \text{for all } t\geq t_0 \text{ and all } y\in \overline{B_R}.
$$

We fix $\tilde{c} \in (c,c^*)$ and we consider $v(t,x_1,y)=v_{\tilde{c}}(x_1-\tilde{c}(t-t_0),y)$. 
In the region $D=(0,+\infty)\times B_R$, $u$ is a solution and $v$ a sub-solution of equation \eqref{parabolique} and for any time $t\geq t_0$
$$
u(t,x_1,y) \geq v (t,x_1,y) \quad \text{for } (x_1,y)\in \partial D.
$$
Moreover, $u(t_0,x_1,y)\geq v(t_0,x_1,y)=0$ in $D$. The comparison principle then yields
$$
u(t,x_1,y)\geq v(t,x_1,y) \quad \text{in } D.
$$
Therefore 
\begin{eqnarray*}
\limsup_{t\rightarrow +\infty}  \sup_{0\leq x_1\leq ct \atop y\in B_R} \big(V(y)-u(t,x)\big) &\leq& \limsup_{t\rightarrow +\infty}  \sup_{0\leq x_1\leq ct \atop y\in B_R} \big(V(y)- v_{\tilde{c}}(x_1-\tilde{c}(t-t_0),y) \big) \\
&\leq& \limsup_{t\rightarrow +\infty} \sup_{y\in B_R} \big( v_{\tilde{c}}((c-\tilde{c})t+\tilde{c}t_0,y) \big) \\
&\leq& \sup_{y\in B_R} \big( V(y) -V^R(y)\big)<\ep.
\end{eqnarray*}
Outside of $B_R$ we already know that $0<u<V<\ep$ for any $t\geq 0$, $x_1 \in \R$ and $|y|\geq R$. Therefore
$$
\limsup_{t\rightarrow +\infty}  \sup_{0\leq x_1\leq ct} \big(V(y)-u(t,x)\big) \leq \ep 
$$
Since this is true for all $\ep>0$ (and for $-ct\leq x_1\leq 0$), we have thereby established \eqref{cpt}.

It now remains to prove Proposition \ref{existv} which we carry now.
As in \eqref{constrss}, we construct a sub-solution of the equation \eqref{Pv} with compact support, namely:
 $$
w(x_1,y)= \begin{cases}
\ep e^{\sigma x_1} \cos(\frac{\pi}{2L}x_1+\frac{\pi}{2}) \psi^R(y)  \text{ if }
-2L<x_1<0, \; y\in B_R, \\
0  \text{ otherwise.}
\end{cases}
$$
In comparison with \eqref{constrss}, there is a translation in $x_1$ such that the support of $w$ now lies in $\R_- \times B_R$. 

For any $b<0$, let $z_b$ be the solution of 
$$
\begin{cases}
-\Delta z_b-c\partial_1 z_b +\alpha g(y)z_b=f(z_b) \quad \text{in } (b,0)\times B_R, \\
z_b(b,y)=V^R(y), \quad z_b(0,y)=0 \quad \text{for } y\in \overline{B_R} \\
z_b(x_1,y)=0 \quad  \text{for } x_1\in (b,0), \; |y|=R.
\end{cases}
$$
Since $V^R$ is a super-solution and $0$ a sub-solution, there exists a solution of this problem. By the sliding method of \cite{BN91}, we know that this solution is unique and satisfies $\partial_1 z_b<0$ in $(b,0) \times B_R$. 

Next, for $b<-L$, wince $w$ is a sub-solution, we also know that 
$$
\forall b\leq -L \quad \forall (x_1,y)\in (b,0) \times \overline{B} \quad z_b(x_1,y)>w(x_1,y).
$$
This allows us to pass to the limit when $b\rightarrow -\infty$. Clearly $z_b(x_1,y) \xrightarrow{b\rightarrow -\infty} v_c(x_1,y)$. By the lower bound, $v_c(x_1,y)>w(x_1,y)$ which shows that $v_c(x_1,y)>0$ in $\R_-^*\times B_R$. Since $\partial_1 v_c \leq 0$ and $v_c\not\equiv 0$, we also know that  $\partial_1 v_c < 0$ in $ \R_-^*\times B_R$. Now since $\lim_{x_1\rightarrow -\infty} v_c(x_1,y)$ must be a positive solution of \eqref{PAR}. Hence by uniqueness we get $v_c(-\infty,y)=V^R(y)$. This completes the proof of Proposition \ref{existv} and therefore of Theorem \ref{thmcv} 
\end{proof}


\section{The case of a positive non-linearity. }
\label{section: positive}

In this section, we prove Theorem \ref{existpos} about the existence of traveling fronts in the positive case. We use the notations of the preceding sections, in particular $\la$ still denotes the principal eigenvalue of the linearized operator around 0 and $\pa$ an associated eigenfunction. We are interested in a traveling front solution of \eqref{EP} when $f$ is only assume to be of the positive type, that is $f:\R\to \R$ is $\CC^1$ with 
\begin{equation}
\label{hyppos}
f(0)=f(1)=0, \; f>0 \text{ on } (0,1) \text{ and } f'(0)>0.
\end{equation}

\subsection{Asymptotic profiles}

The linearized operator around 0 is exactly the same as in the Fisher-KPP case thus there exists $\alpha_0>0$ such that $\la<0$ for $\alpha<\alpha_0$ and $\la\geq 0$ for  $\alpha \geq \alpha_0$. In the same way as before, we can prove the existence result:
\begin{proposition}
For $\alpha<\alpha_0$, there exists $V(y)$ a maximal positive asymptotic profile solution of \eqref{PA}.
\end{proposition} 
However, in this more general case, we have no information on the uniqueness of the positive asymptotic profile nor on the non-existence of profiles for $\alpha\geq \alpha_0$. Actually, this will depend on the non-linearity $f$. 
\begin{proof}
As in Theorem \ref{existPA}, $\ep \pa$ is a subsolution for $\ep>0$ small enough and 1 is a supersolution. Using monotone iterations, we can construct a maximal positive asymptotic profile.
\end{proof}

Since the positive asymptotic profile may not be unique, we will need the following lemma before turning to the construction of traveling fronts.
\begin{lemma}
\label{thetaa}
For any $\alpha<\alpha_0$, there exists $\theta_\alpha>0$ such that any positive asymptotic profile $W(y)$ solution of \eqref{PA} satisfies
 \begin{equation}
\label{Wtheta}
W(0)\geq 2\theta_\alpha.
\end{equation} 
\end{lemma}
\begin{proof}
By contradiction, assume that there exist $W_n>0$ solution of \eqref{PA} with $W_n(0) \xrightarrow{n\to \infty} 0$. Then $\psi_n=\frac{W_n}{\|W_n\|_\infty}$ is a solution of
$$
-\Delta \psi_n+\alpha g(y)\psi_n=\frac{\|W_n\|_\infty \psi_n}{\|W_n\|_\infty}.
$$ 
Since $\psi_n$ is bounded, up to extraction of a subsequence, we can let $n$ tend to $\infty$ to obtain $\psi_n \to \psi_\infty\geq 0$ with $\|\psi_\infty\|_\infty=1$ and 
$$
-\Delta \psi_\infty +\alpha g(y)\psi_\infty=f'(0)\psi_\infty.
$$
In the previous limit, we made use of the compactness argument derived from the fact that $\psi_n(y)\rightarrow 0$ as $|y|\to \infty$ uniformly in $n$ as is obtained from Theorem \ref{BR}.
Thus, $\psi_\infty$ is a principal eigenfunction associated with the eigenvalue 0 which contradicts the fact that $\la<0$. 
\end{proof}

\subsection{Existence of traveling fronts}

In this section, we will use the same method as in section \ref{section: cetoile} to construct a traveling front solution of \eqref{PA}. However due to the possible non-uniqueness of the positive asymptotic profile, the result will be somewhat weaker in that the limiting profile is not prescribed. More precisely, we will prove the existence of $c\in \R$ and $u$ solutions of 
\begin{equation}
\label{FPcw}
\begin{cases}
-\Delta u - c\partial_1 u + \alpha g(y) u=f(u), \quad  x=(x_1,y)\in
\Rn \\
u(x_1,.)\xrightarrow{x_1\rightarrow +\infty} 0 \; \text{uniformly in }
y\in \R^{N-1},\\
u> 0, \quad u \text{ bounded} \quad \text{and } \partial_1u<0.
\end{cases}
\end{equation}  

The construction of the solution follows the same line as before. We start by solving the problem on a domain bounded in $x_1$, precisely we study solution of \eqref{FPB} where $V$ is the maximal positive asymptotic profile. Since $V$ is maximal, Theorem \ref{boite} still holds true. The only difficulty is to translate Theorem \ref{ca} to the case of a positive non-linearity. Having this aim in mind, we introduce two notations: 
$$
m=\sup_{(0,1]} \frac{f(s)}s \text{ and }\mu_\alpha =\la+f'(0)-m \leq \la<0.
$$
In the Fisher-KPP case, we observe that $m=f'(0)$ and thus $\mu_\alpha=\la$. We will prove the following result.
\begin{theorem}
\label{capos}
Let $\ep>0$ be fixed such that $\ep \pa<V$ (see previous section). Let $\delta>0$ be such that  $\delta <-\la <f'(0)$ and let $\eta>0$ be such that $\eta<\ep$ and $\forall s\in [0,\eta] \; f(s)\geq (f'(0)-\delta)s$. We fix $\theta \in (0, \frac \eta 2)$ such that $\theta<\theta_\alpha$ (see Lemma \ref{thetaa} for definition of $\theta_\alpha$). Then there exists $A_\ep>0$ such that for all $a\geq A_\ep$ there exists a unique speed $ca \in (0, 2\sqrt{-\mu_\alpha}+\ep)$ such that $u_a^{c_a}$ the solution of \eqref{FPB} satisfies $u_a^{c_a}(0,0)=\theta$.
\end{theorem}
\begin{proof}
The only difference with the proof of Theorem \ref{ca} is in the upper bound of $c_a$ (section ii) in the proof of Theorem \ref{ca}). It goes as before but we need to replace $\la$ by $\mu_\alpha$.

So let us construct an explicit super-solution of $\eqref{FPB}$ for $c=2\sqrt{-\mu_\alpha}+\ep$. As before, we can fix $\beta<\alpha$ such that $2\sqrt{\mu_\beta}\leq 2\sqrt{\mu_\alpha}+\ep$ and we consider $\psi_\beta$ the positive eigenfunction of the linearized operator around 0 with the normalization $\|\psi_\beta\|_\infty=1$. We fix $R>0$ such that for all $r\geq R$
$$
(\alpha-\beta)g(r)+\mu_\beta>0 \text{ and } \alpha g(r)>m. 
$$
Let us then fix $k>0$ such that $k \psi_\beta\geq V$ on $\bar{B}_R$. Then $k\psi_\beta\geq V$ on $\R^{N-1}$. Indeed we argue by contradiction and assume that $\min_{\R^{N-1}} (k\psi_\beta -V)=(k\psi_\beta-V)(y_0)<0$ but at this point $y_0\in \R^{N-1}\setminus \bar{B}_R$, we have
\begin{eqnarray*}
-\Delta (k\psi_\beta-V)(y_0)+(\alpha g(y_0)-m)(k\psi-V)(y_0) = \qquad \qquad \qquad  \\
\hfill \mu_\beta k\psi_\beta(y_0)+(\alpha-\beta)g(y_0)k\psi_\beta(y_0)+mV(y_0)-f(V(y_0))>0
\end{eqnarray*} 
and this yields a contradiction. 

Then we prove as in Theorem \ref{ca} that $w(x_1,y)=z(x_1)k\psi_\beta(y)$ is a super-solution if $z$ is a solution of 
$$
\begin{cases}
z''+cz'-\mu_\beta z=0 \quad \text{on } (-a,a), \\
z(-a)=1, \quad z(a)=0
\end{cases}
$$ 
and as before we obtain that 
$$
0<u_a^c(0,0)<e^{-\frac c2 a} k\leq e^{-a\sqrt{-\mu_\alpha}}k
$$
and the upper bound of $c_a$ for large $a$ is thus proved. 
\end{proof}
Then the convergence of $u_a^{c_a}$ to a solution of \eqref{FPcw} when $a$ tends to $+\infty$ is exactly the same except that the non-uniqueness of the positive asymptotic profile prevents us from determining the precise limit of $u(x_1,.)$ for $x_1 \rightarrow -\infty$. 

We leave it as an open problem to know whether there always is a traveling front connecting the maximum profile $V(y)$ to the 0 solution.


\section{The case of a bistable non-linearity. Asymptotic profiles.}
\label{section: APbist}

In this section we consider again equation \eqref{main} but in the bistable framework. That is, we assume that $f$ is a $\CC^1$ function that satisfies the following assumptions for some $\theta\in(0,1)$:
\begin{equation}
\label{fbistable}
f(0)=f(\theta)=f(1)=0, \quad f(s)<0 \text{ for } s\in (0,\theta) \text{ and } f(s)>0 \text{ in } (\theta,1),
\end{equation}
\begin{equation}
\label{derivf}
f'(0)>0, \; f'(1)>0.
\end{equation}
We also assume that
\begin{equation}
\label{fcpos}
\int_0^1 f(s)ds>0.
\end{equation}
We are concerned here with the existence of traveling front solutions of \eqref{main}, that is, $(c,u)$ solution of \eqref{FPc}. First we require some preliminary results on the equation \eqref{PA} in the bistable case.

\subsection{Existence of asymptotic profiles in the bistable case}

Consider equation 
\begin{equation}
\label{PAb}
\begin{cases}
\Delta u +f(u)-\alpha g(y)u=0, \quad y\in \R^{N-1}, \\
u\geq 0, \quad u \text{ bounded},
\end{cases}
\end{equation}
under the same assumption \eqref{Hypgpos} and \eqref{Hypginf} as above for the function $g$.

The existence of solutions depends on $\alpha$ and is obtained in the following theorem.
\begin{theorem}
\label{thmPAb}
Let $f$ and $g$ satisfy the above assumptions. There exists a threshold value $\alpha^*\in (0,\infty)$, such that:
\begin{description}
\item[i)] For any $\alpha \in ( \alpha^*,+\infty)$, \eqref{PAb} does not have any positive (non-zero) solution.
\item[ii)] For any $\alpha \in (0, \alpha^*]$, \eqref{PAb} admits a maximal positive solution $V(y)$.
\item[iii)] For any $\alpha \in  (0, \alpha^*)$, \eqref{PAb} admits a second positive solution $W(y)$ with $0<W(y)<V(y)$.
\end{description}
\end{theorem}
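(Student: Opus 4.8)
The plan is to study the family of Dirichlet problems on balls $B_R$ and to use the sub/super-solution method together with a monotonicity analysis in the parameter $\alpha$. First I would fix attention on the truncated problem
\begin{equation*}
-\Delta u + \alpha g(y) u = f(u) \text{ in } B_R, \qquad u = 0 \text{ on } \partial B_R, \qquad 0 \le u \le 1,
\end{equation*}
and observe that the constant $1$ is a super-solution (since $f(1)=0$ and $\alpha g \ge 0$) while $0$ is a sub-solution; hence a maximal solution $u_{R,\alpha}$ between $0$ and $1$ is obtained by monotone iteration from above, starting at $1$. Standard comparison (as in Proposition \ref{majoration}) shows $u_{R,\alpha}$ is nonincreasing in $R$ for the ordering where larger $R$ forces smaller boundary data... more carefully: $u_{R',\alpha}$ restricted to $B_R$ is a sub-solution of the problem on $B_R$ when $R'>R$, so $u_{R,\alpha} \ge u_{R',\alpha}$ on $B_R$, and the decreasing limit $V_\alpha := \lim_{R\to\infty} u_{R,\alpha}$ exists; by local elliptic estimates it solves \eqref{PAb}. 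It is also the maximal solution of \eqref{PAb} in $[0,1]$, since any solution $U$ of \eqref{PAb} is a sub-solution of the ball problem and so $U \le u_{R,\alpha}$ for all $R$. The first task, then, is to decide when $V_\alpha$ is nonzero.

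For the threshold, I would define $\alpha^* = \sup\{\alpha>0 : \eqref{PAb} \text{ has a positive solution}\}$ and prove $0<\alpha^*<\infty$. Finiteness: for $\alpha$ large, $\alpha g(y) > K := \|f\|_{\mathrm{Lip}[0,1]}$ outside a fixed small ball $B_\rho$, and on $B_\rho$ one compares against the Dirichlet principal eigenfunction; more directly, the energy argument of Theorem \ref{energy}'s flavor or a direct maximum-principle estimate (evaluate at an interior maximum $y_0$ of a putative positive solution $U$: if $|y_0|$ is large then $\Delta U(y_0)\le 0$ forces $\alpha g(y_0)U(y_0) \le f(U(y_0)) \le KU(y_0)$, a contradiction; if $|y_0|$ small one uses that $U$ must exceed $\theta$ somewhere for $\int_0^1 f>0$ to help, combined with a Poincaré-type lower bound that fails for large $\alpha$) gives non-existence for large $\alpha$. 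Positivity of $\alpha^*$: for $\alpha$ small, I would produce a compactly supported sub-solution. Take $R_0$ large and $\phi$ the Dirichlet principal eigenfunction on $B_{R_0}$ with eigenvalue $\lambda_\Delta(R_0)$, normalized so that $\max\phi$ lands in the zone where $f$ is favorable; then $\underline V = \tau\phi$ (suitably scaled, possibly via a two-parameter family $\tau\phi$ with $\tau$ chosen so that $\tau\max\phi$ is close to $1$) satisfies $-\Delta\underline V + \alpha g\underline V - f(\underline V) = \lambda_\Delta(R_0)\underline V + \alpha g\underline V - f(\underline V)$, which is $\le 0$ on $B_{R_0}$ provided $\lambda_\Delta(R_0)$ is small (large $R_0$), $\alpha$ is small, and $f(\tau\phi) \ge (\lambda_\Delta(R_0)+\alpha\max_{B_{R_0}}g)\tau\phi$ on the support, the latter being arranged because $\int_0^1 f>0$ guarantees $f(s)/s$ is eventually large enough on an interval near where the bistable nonlinearity is positive — this is the standard "ignition by a large enough favorable region" mechanism. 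With $\underline V \le 1 = $ super-solution, one gets a positive solution of \eqref{PAb}, so $\alpha^*>0$.

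Parts i) and ii) then follow: i) is non-existence for $\alpha>\alpha^*$ by definition of the sup together with a monotonicity remark — if \eqref{PAb} had a positive solution at some $\alpha_1>\alpha^*$, then for any $\alpha_2 < \alpha_1$ that same solution is a super-solution at $\alpha_2$ (since $\alpha_2 g \le \alpha_1 g$), and combined with a small sub-solution one gets solutions for all $\alpha_2<\alpha_1$, contradicting the definition unless one is careful; cleaner is to show directly the solution set in $\alpha$ is an interval $(0,\alpha^*]$, the endpoint included by passing to the limit $\alpha\uparrow\alpha^*$ using the uniform bound $0\le V_\alpha\le 1$ and elliptic estimates, with the limit nonzero because of a uniform-in-$\alpha$ lower bound on $\int_{B_\rho} V_\alpha$ (coming from the sub-solution construction being uniform on compact $\alpha$-intervals). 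That gives the maximal solution $V$ in ii). For iii), with $\alpha<\alpha^*$ strictly, I would use a degree-theoretic or variational mountain-pass argument: $0$ is a strict local minimum of the energy $J_\alpha$ (because, unlike the KPP case, the linearization at $0$ has positive principal eigenvalue here — $f'(0)>0$ but bistable, and for $\alpha<\alpha^*$ one checks $0$ is linearly stable in the relevant range, using that the favorable effect of $f$ near $0$ is dominated), while $V$ is a strict local minimum with $J_\alpha(V)<0$; the mountain-pass level between them yields a second critical point $W$, and the maximum principle plus maximality of $V$ force $0<W<V$. The main obstacle I anticipate is exactly this part iii): establishing that $0$ is a strict local minimum of $J_\alpha$ in $\HH$ for $\alpha$ in the right range (this is where bistability versus KPP really bites, and where one must be careful about $f'(0)>0$), and verifying the Palais–Smale condition for $J_\alpha$ on the weighted space $\HH$ so that the mountain-pass theorem applies — the compactness $\HH\hookrightarrow L^2$ from the $\alpha g$ term should handle PS, but the sign conditions making the geometry work need the sharpest use of the hypotheses $f'(0)>0$, $f'(1)>0$, and $\int_0^1 f>0$.
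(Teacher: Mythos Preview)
Your overall strategy (sub/super-solutions on balls, monotonicity in $\alpha$, a limiting argument at $\alpha^*$, and a variational second solution) is reasonable and parallels the paper in spirit, but there is a genuine gap in your existence step for small $\alpha$.

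The proposed sub-solution $\underline V=\tau\phi$, with $\phi$ a Dirichlet principal eigenfunction on $B_{R_0}$, cannot work in the bistable case. Near $\partial B_{R_0}$ one has $\tau\phi(y)\to 0$, and since $f<0$ on $(0,\theta)$ (so in particular $f(s)/s\to f'(0)<0$ as $s\to 0^+$), the required inequality
\[
f(\tau\phi)\ \ge\ \bigl(\lambda_\Delta(R_0)+\alpha g\bigr)\,\tau\phi
\]
fails there: the left side is negative while the right side is nonnegative. No choice of $\tau$ repairs this, because the eigenfunction always takes all values in $(0,\tau\max\phi)$. This is precisely the point where bistable differs from KPP; the ``eigenfunction sub-solution'' trick is a KPP device. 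The paper bypasses this entirely by a variational argument: it minimizes the energy $J_\alpha(w)=\int\frac12|\nabla w|^2+\frac{\alpha}{2}g w^2-F(w)$ over $\HH$ and tests against the plateau function $\zeta_R$ (equal to $1$ on $B_R$, linear cutoff on $B_{R+1}\setminus B_R$), obtaining $J_0(\zeta_R)\le -F(1)|B_R|+C|B_{R+1}\setminus B_R|<0$ for $R$ large by \eqref{fcpos}, hence $J_\alpha(\zeta_R)<0$ for $\alpha$ small. The minimizer is then a nonzero solution. If you want a sub/super-solution route instead, you would need a genuine solution of $-\Delta w=f(w)$ in a large ball with Dirichlet data (obtained variationally, not as an eigenfunction), extended by zero.

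Two secondary points. Your non-existence argument for large $\alpha$ via evaluation at an interior maximum is incomplete: if the maximum is at $y_0=0$ (where $g$ vanishes) you get no information, and your fallback is vague. The paper instead multiplies by $u$, integrates, and uses a weighted interpolation inequality (Lemma~\ref{CSH}) to conclude. Your closedness argument at $\alpha^*$ also leans on the sub-solution construction, which fails; the paper's argument is cleaner: any positive solution has $\max u>\theta$ (evaluate at the max point), and this lower bound survives the limit. For part iii) your mountain-pass idea is a legitimate alternative to the paper's topological-degree citation of Rabinowitz; the compact embedding $\HH\hookrightarrow L^2$ does give Palais--Smale, and $0$ is indeed a strict local minimum since the linearization $-\Delta+\alpha g-f'(0)$ is positive.
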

The rest of this section is devoted to the proof of this Theorem.

This Theorem follows from the observation that for $\alpha>0$ any positive solution $u(y)$ of \eqref{PAb} satisfies $u(y)\rightarrow 0$ as $|y|\rightarrow \infty$. This is obtained from Corollary \ref{decry}. 

Next, by the maximum principle, any solution of \eqref{PAb} satisfies $0\leq u\leq 1$ (we think of $f(s)$ as having been extended by 0 outside $[0,1]$).

Now $\us\equiv 1$ is a super-solution of problem \eqref{PAb}. Any solution of \eqref{PAb} for $\alpha$ is a sub-solution of \eqref{PAb} for any parameter $\beta\leq \alpha$. Therefore, if there exists a positive bounded solution of \eqref{PAb} for $\alpha$, there also exists a positive solution for any $0<\beta\leq \alpha$.

Next, we claim that for small enough $\alpha>0$, \eqref{PAb} admits a positive solution. Indeed, consider the functional defined on $\HH$:
$$
J(w)=J_\alpha (w)=\int_{\R^{N-1}} \left( \frac{1}{2} |\nabla w|^2 +\frac{\alpha}{2} g(y) w^2 -F(w) \right)  dy
$$
where $F(z)=\int_0^z f(s)ds$. Recall that $f$ is extended by 0 outside $[0,1]$, thus $F$ is bounded. Since $g(r)\rightarrow \infty$ as $r\rightarrow \infty$, it is straightforward to show that there exists a minimizer $v$ of $J(w)$:
$J(v)=\min\{ J(w), \, w\in \HH \}$. Furthermore, we know that $v\geq 0$ and $v$ is a solution of \eqref{PAb} (see Theorem \ref{energy} for details).

Let us show that for $\alpha>0$ small enough $J(v)<0$. To this end, let $\zeta_R$ be defined by
$$
\zeta_R(y)=
\begin{cases}
1 &\text{if } |y|\leq R \\
R+1 - |y| &\text{if } R \leq |y| \leq R+1 \\
0 & \text{if } |y|\geq R+1
\end{cases}
$$ 
Then $\zeta_R \in \HH$ and 
$$
J_0(\zeta_R)=\int_{\R^{N-1}} \frac{|\nabla \zeta_R|^2}{2} -F(\zeta_R) \leq -F(1) |B_R|+C|B_{R+1} \setminus B_R|
$$
where $|A|$ denotes the volume of $A$ and $C$ is a constant.
Since $-F(1)<0$ by \eqref{fcpos}, we see that by choosing $R$ large enough, $J_0(\zeta_R)<0$. Then for such an $R$ fixed, we see that $J_\alpha(\zeta_R)<0$ provided $\alpha >0$ is small enough. This guarantees that $J_\alpha(v)<0$.

It follows that $v\not\equiv 0$. By the maximum principle, we then have $0<v<1$. This shows that for small $\alpha>0$, \eqref{PAb} admits a positive solution.

\medskip

Next, we show that if $\alpha$ is large enough \eqref{PAb} does not admit any positive solution. This can be seen by multiplying the equation by $u$ and integrating to yield:
\begin{equation}
\label{PAint}
\int |\nabla u|^2+\alpha \int g(y) u^2 = \int f(u) u \leq m\int u^2
\end{equation}
where $\displaystyle m=\sup_{s>0} \frac{f(s)}{s}>0$. We conclude with the following lemma:
\begin{lemma}
\label{CSH}
Under the assumption \eqref{Hypginf} $g(r)\xrightarrow{r\rightarrow \infty} \infty$, for any $\ep>0$, there exists a constant $K(\ep)>0$ such that for all $u\in \HH$ one has:
$$
\int_{\R^{N-1}} u^2 \leq \ep \int_{\R^{N-1}}|\nabla u|^2 +K(\ep) \int_{\R^{N-1}} g(y) u^2 .
$$
\end{lemma}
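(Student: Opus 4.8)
The plan is to argue by contradiction, exploiting the weight $g$ to obtain the tightness that compensates for the failure of compactness of the embedding $H^1(\R^{N-1})\hookrightarrow L^2(\R^{N-1})$ on the unbounded domain. Suppose the inequality fails for some $\varepsilon_0>0$: then for every $n\in\N$ there is $u_n\in\HH$ with
$$
\int_{\R^{N-1}} u_n^2 \;>\; \varepsilon_0\int_{\R^{N-1}}|\nabla u_n|^2 \;+\; n\int_{\R^{N-1}} g(y)\,u_n^2 .
$$
Normalizing so that $\int u_n^2=1$, we read off $\int|\nabla u_n|^2<1/\varepsilon_0$ and $\int g\,u_n^2<1/n$. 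In particular $(u_n)$ is bounded in $H^1(\R^{N-1})$.

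The key step is a uniform tail estimate. For $R>0$ set $g_R=\inf_{|y|\ge R}g(y)$; by \eqref{Hypginf} one has $g_R\to+\infty$ as $R\to+\infty$, and
$$
\int_{|y|\ge R} u_n^2 \;\le\; \frac{1}{g_R}\int_{|y|\ge R} g(y)\,u_n^2 \;\le\; \frac{1}{g_R},
$$
so the mass of $(u_n)$ outside large balls is small uniformly in $n$. Since $(u_n)$ is bounded in $H^1(B_R)$ for each $R$, the Rellich--Kondrachov theorem gives precompactness in $L^2(B_R)$; a diagonal extraction over $R=1,2,\dots$ together with the uniform tail bound then yields a subsequence, still denoted $(u_n)$, converging strongly in $L^2(\R^{N-1})$ (and a.e.) to some $u$ with $\int u^2=1$. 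By Fatou's lemma applied to $g\,u_n^2\ge 0$,
$$
\int_{\R^{N-1}} g(y)\,u^2 \;\le\; \liminf_{n\to\infty}\int_{\R^{N-1}} g(y)\,u_n^2 \;=\;0,
$$
so $g\,u^2=0$ a.e.; since $g>0$ on $\R^{N-1}\setminus\{0\}$ by \eqref{Hypgpos}, this forces $u=0$ a.e., contradicting $\int u^2=1$. Hence the claimed inequality holds, with $K(\varepsilon)$ obtained as the $K$ that first works.

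The only genuinely delicate point is the upgrade from $H^1$-boundedness plus uniform smallness of the tails to strong convergence in $L^2(\R^{N-1})$ along a subsequence; this is the standard "Rellich on each ball $+$ tightness" argument and presents no real difficulty. Note also that the very last step, where $g\,u^2=0$ is converted into $u\equiv 0$, is where the positivity hypothesis \eqref{Hypgpos} on $g$ away from the origin is essential.
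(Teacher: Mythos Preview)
Your proof is correct, but it takes a genuinely different route from the paper's. You argue by contradiction and compactness: normalize a violating sequence, use the weight $g$ to get uniform tightness, upgrade $H^1$-boundedness to strong $L^2$-convergence via Rellich--Kondrachov on balls plus a diagonal argument, and then use Fatou together with the positivity of $g$ away from the origin to force the limit to vanish, contradicting the normalization. This is clean and robust, and in effect you are reproving (and then exploiting) the compactness of the embedding $\HH\hookrightarrow L^2(\R^{N-1})$, which the paper invokes elsewhere.

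The paper instead gives a direct, constructive argument: for given $\varepsilon$ it chooses a small radius $\delta$ so that the first Dirichlet eigenvalue of $-\Delta$ on $B_{2\delta}$ exceeds $4/\varepsilon$, splits $u=\chi u+(1-\chi)u$ with a cutoff $\chi$ supported in $B_{2\delta}$, controls the inner part by Poincar\'e's inequality, and controls the outer part (supported in $|y|\ge\delta$) by the positive lower bound of $g$ there. This yields an explicit $K(\varepsilon)$ in terms of $\delta$, $\|\nabla\chi\|_\infty$, and $\min_{|y|\ge\delta}g$. The trade-off is that the paper's proof is more elementary (no sequential compactness, no diagonal extraction) and quantitative, while yours is softer but perhaps more transparent as to \emph{why} the inequality holds --- namely, because the weighted space embeds compactly into $L^2$.
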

Indeed choosing in the lemma $\ep=\frac{1}{2m}$, we get from \eqref{PAint}
$$
\frac{1}{2} \int |\nabla u|^2+\left( \alpha -m K(\frac{1}{2m})\right)  \int g(y) u^2 \leq 0
$$
This shows that for $\alpha \geq mK(\frac{1}{2m})$, the only solution of $\eqref{PAb}$ is $u\equiv 0$.
\begin{proof}[Proof of Lemma \ref{CSH}]
Let $\delta=\delta(\ep)>0$ be chosen such that the principal eigenvalue of $-\Delta$ in $H ^1_0(B_{2\delta})$ is larger than $\frac{4}{\ep}$. Let $\chi$ be a smooth cutoff function such that $\chi(r)=1$ if $0\leq r\leq \delta$, $\chi(r)=0$ if $r\geq 2\delta$ and $0\leq \chi \leq 1$. Consider $u_1=\chi u$ and $u_2=(1-\chi)u$ so that $u=u_1+u_2$. Using $(a+b)^2 \leq 2\left( a ^2 +b ^2\right)$, since $u_1\in H^1_0(B_{2\delta})$ by Poincar\'e's inequality, we have
$$
\int_{R^{N-1}} {u_1}^2=\int _{B_{2\delta}} {u_1}^2\leq \frac{\ep}{4} \int _{B_{2\delta}} |\nabla u_1|^2 \leq \frac{\ep}{2} \left( \int _{B_{2\delta}} |\nabla u|^2 \chi ^2 + \int _{B_{2\delta}\setminus B_\delta} u^2 |\nabla \chi|^2 \right).
$$
So that 
$$
\int_{R^{N-1}} {u_1}^2 \leq  \frac{\ep}{2} \int _{B_{2\delta}} |\nabla u|^2 +\ep k_1(\ep) \int_{|y|\geq \delta } u^2
$$
where $k_1(\ep)\geq |\nabla \chi|^2$.

Next $\displaystyle \int _{R^{N-1}} {u_2}^2 \leq  \int_{|y|\geq \delta } u^2$.
Therefore 
$$
\int _{R^{N-1}} {u}^2 \leq 2\left( \int {u_1}^2+\int {u_2}^2 \right) \leq \ep \int  |\nabla u|^2+ K(\ep) \int g(y)u^2
$$
where $K(\ep)=2\frac{\ep k_1(\ep)+1}{g(\delta)}$. The lemma is thus proved.
\end{proof}

The next step is to prove that the set of $\alpha >0$ such that \eqref{PAb} has a solution is a closed set. Let $\alpha_j \rightarrow \alpha^*$ be a sequence such that \eqref{PAb} admits a solution $u_j$ such that $0<u_j<1$ for all $j$. Note that by the maximum principle, $\theta <\max u_j <1$. The sequence $(u_j)$ is bounded by 1 and by standard elliptic estimates is locally compact. Therefore, one can extract a subsequence $u_j$ such that $u_j\rightarrow u^*$ uniformly on compact sets in the $\CC^2$-norm. Therefore $u^*$ is a solution of \eqref{PAb} for the value $\alpha=\alpha^*$. We know that $u^*\geq 0$, but since $\max u_j >\theta$, we see that $\max u^*\geq \theta$. Indeed by section \ref{section: prelim}, $u_j(y)\rightarrow 0$ as $|y| \rightarrow \infty$ uniformly with respect to $j$. Therefore $u^*>0$ and \eqref{PAb} also has a positive solution for $\alpha^*$. This shows that the set of $\alpha$ such that \eqref{PAb} has a positive solution is an interval $(0,\alpha^*]$ with $0<\alpha^*<\infty$.

Considering the evolution equation 
$$
\begin{cases}
\partial_t z-\Delta z +\alpha g(y) z=f(z), \quad t>0, \; y\in \R^{N-1}, \\
z(0,y)=1,
\end{cases}
$$
we see that $t\mapsto z(t,y)\geq 0$ is decreasing and therefore has a limit. This limit is necessarily the maximal positive solution $V=V_\alpha$ for the  $\alpha$ for which \eqref{PAb} has a positive solution, that is $\alpha \in (0,\alpha^*]$, or is 0 in the opposite case, that is when $\alpha >\alpha^*$.

The existence of a second solution when $0<\alpha<\alpha^*$ is inspired from a work of P. Rabinowitz \cite{PR1}.
In a slightly different formulation, the existence of pairs of solutions is established in \cite{PR1} by a topological degree argument for bistable type nonlinearities and another type of parameter dependance. The use of the topological degree involves compact operators and the results of \cite{PR1} are set in the framework of bounded domains. A similar construction can be carried here owing to the condition \eqref{Hypginf} $g(r)\rightarrow +\infty$ as $r\rightarrow +\infty$. Indeed, under this condition, the injection $\HH \hookrightarrow L^2(\R^{N-1})$ is compact.This allows one to construct a compact operator and to carry the argument of \cite{PR1} to the present framework.

Since we will not use the second solution, we will leave out the details of the proof of the existence of a second solution.

\subsection{Stable asymptotic profiles}

As we have seen, a solution of \eqref{PAb} is obtained by the minimization of $J=J_\alpha$ defined above. The proof of the existence of the previous solution for $\alpha>0$ small yields the following result.
\begin{proposition}
\label{PAminJ}
There exists $0<\alpha_*\leq \alpha^*$ such that  for all $\alpha \in (0,\alpha_*)$ there exists a minimum $v_\alpha>0$ of $J_\alpha$ and such that 
$$
J_\alpha(v_\alpha)=\min_{H^1(\R^{N-1})} J_\alpha <0.
$$
\end{proposition}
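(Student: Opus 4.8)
The plan is to organise everything around the ground-state energy $\mu(\alpha):=\inf_{H^1(\R^{N-1})}J_\alpha$ and its monotonicity in $\alpha$; most of the ingredients already appear in the proof of Theorem~\ref{thmPAb}. First I would record, for each fixed $\alpha>0$, that $J_\alpha$ attains its infimum. Since $f$ is extended by $0$ outside $[0,1]$, $F$ is bounded and satisfies $F(s)\le C s^2$ for all $s\ge 0$ (and $F(s)=0$ for $s\le 0$); together with $g(r)\to+\infty$ this makes $J_\alpha$ bounded below and coercive on $\HH$, and $J_\alpha(w)=+\infty$ whenever $\sqrt g\,w\notin L^2$, so that $\inf_{H^1(\R^{N-1})}J_\alpha=\inf_\HH J_\alpha$. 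The compact embedding $\HH\hookrightarrow L^2(\R^{N-1})$ coming from \eqref{Hypginf} then gives a minimiser $v_\alpha\in\HH$; exactly as in Theorem~\ref{energy}, $|v_\alpha|$ is also a minimiser so we may take $v_\alpha\ge0$, and $v_\alpha$ solves \eqref{PAb} with $0\le v_\alpha\le1$. Note $\mu(\alpha)=J_\alpha(v_\alpha)\le J_\alpha(0)=0$.

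Next I would exploit two monotonicities. For each fixed $w$, $\alpha\mapsto J_\alpha(w)$ is nondecreasing because $g\ge0$, hence $\alpha\mapsto\mu(\alpha)$ is nondecreasing on $(0,+\infty)$. On the other hand, the test function $\zeta_R$ already used in the proof of Theorem~\ref{thmPAb} satisfies $J_0(\zeta_R)\le -F(1)|B_R|+C|B_{R+1}\setminus B_R|$, which is $<0$ once $R$ is fixed large enough, since $F(1)>0$ by \eqref{fcpos} and $|B_R|$ dominates $|B_{R+1}\setminus B_R|$. As $J_\alpha(\zeta_R)=J_0(\zeta_R)+\tfrac\alpha2\int g\,\zeta_R^2\to J_0(\zeta_R)$ when $\alpha\to0^+$, there is $\alpha_1>0$ with $\mu(\alpha)\le J_\alpha(\zeta_R)<0$ for every $\alpha\in(0,\alpha_1)$.

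I would then set $\alpha_*:=\sup\{\alpha>0:\ \mu(\alpha)<0\}$, so $\alpha_*\ge\alpha_1>0$. Monotonicity of $\mu$ gives, for $0<\alpha<\alpha_*$, some $\beta\in(\alpha,\alpha_*)$ with $\mu(\beta)<0$ and hence $\mu(\alpha)\le\mu(\beta)<0$; thus $\mu(\alpha)<0$ throughout $(0,\alpha_*)$. For such $\alpha$, $J_\alpha(v_\alpha)=\mu(\alpha)<0=J_\alpha(0)$ forces $v_\alpha\not\equiv0$, and being a nonnegative nontrivial solution of \eqref{PAb} it is positive by the strong maximum principle; this is precisely the assertion of the Proposition, with $v_\alpha$ the announced minimiser. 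Finally $\alpha_*\le\alpha^*$: whenever $\mu(\alpha)<0$ the preceding line produces a positive solution of \eqref{PAb}, so by Theorem~\ref{thmPAb} that value of $\alpha$ lies in $(0,\alpha^*]$; hence $\{\alpha:\mu(\alpha)<0\}\subset(0,\alpha^*]$.

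There is no serious obstacle here: the whole argument is bookkeeping on top of facts established while proving Theorem~\ref{thmPAb}. The two points that deserve a line of care are the identification $\inf_{H^1(\R^{N-1})}J_\alpha=\inf_\HH J_\alpha$ (which is what lets the minimum be taken over $H^1(\R^{N-1})$ as stated, and which in turn forces the lower bound and coercivity of $J_\alpha$ through $F(s)\le Cs^2$ and $g\to\infty$), and the passage from a nonnegative minimiser to a classical positive solution via symmetrisation and the strong maximum principle — both already dealt with in the proof of Theorem~\ref{energy}.
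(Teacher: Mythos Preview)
Your proposal is correct and follows essentially the same approach as the paper, which simply states that the Proposition is a direct consequence of the argument already given in the proof of Theorem~\ref{thmPAb} (the existence of a minimiser of $J_\alpha$ together with the test function $\zeta_R$ showing $J_\alpha(\zeta_R)<0$ for small $\alpha$). Your explicit definition of $\alpha_*$ as $\sup\{\alpha>0:\mu(\alpha)<0\}$ via the monotonicity of $\alpha\mapsto\mu(\alpha)$, and the remark that $\inf_{H^1}J_\alpha=\inf_\HH J_\alpha$, are natural elaborations that the paper leaves implicit.
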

 
In the following, we require the notion of stable solution. 
\begin{definition}
\label{defvp}
Let $v$ be a solution of \eqref{PAb}. Eigenvalues of the linearized problem about $v$ are defined as the eigenvalues $\lambda$ of 
$$
 -\Delta \varphi +\alpha g(y) \varphi -f'(v)\varphi = \lambda \varphi \quad \text{in } \R^{N-1}.
$$
The principle eigenvalue is uniquely determined by the existence of a corresponding eigenfunction $\varphi$ with $\varphi>0$. 
We say that $v$ is (weakly) stable if the principal eigenvalue $\lambda=\lambda_1[v]$ of the linearized problem satisfies $\lambda_1[v]\geq 0$.
\end{definition}

It is well known that the maximal solution $V(y)$ given by Theorem \ref{thmPAb} when $0<\alpha\leq \alpha^*$ is weakly stable. Likewise, the minimum solution of the energy of the Proposition \ref{PAminJ} above, when $0<\alpha<\alpha_*$, is a weakly stable solution. 

In the following we consider the case $0<\alpha<\alpha_*$ and we make the following assumption.
\begin{equation}
\label{uniqsPA}
\text{There exists a unique positive stable solution of \eqref{PA}.} 
\end{equation}
This condition implies that the minimizer solution $v_\alpha$: $J_\alpha(v_\alpha)=\min\{J_\alpha(v), \; v\in \HH \}$ coincides with the maximum solution $V$.

We leave it as an open problem to give sufficient conditions for the uniqueness of the stable solution. Uniqueness results have been given for analogous problems but with $\alpha=0$, which would rather correspond to the minimal solution in our framework \cite{PS}.
Likewise it would be interesting to give sufficient conditions that ensure that $\alpha_*=\alpha^*$.
Condition \eqref{uniqsPA} has several implications that we can state.
\begin{proposition}
For $\alpha \in (0,\alpha_*]$, under condition \eqref{uniqsPA}, there does not exist a pair of distinct ordered functions $(v_1,v_2)$ with $0<v_1\leq v_2 < V$, $v_1$ is a sub-solution and $v_2$ is a non-maximal solution. That is, if $0<v_1\leq v_2<V$ are respectively sub-solution and solution of \eqref{PAb}, then $v_1\equiv v_2$.
\end{proposition}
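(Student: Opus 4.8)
I would argue by contradiction. Suppose $v_1\le v_2<V$ on $\R^{N-1}$, with $v_1$ a subsolution and $v_2$ a solution of \eqref{PAb}, but $v_1\not\equiv v_2$. The plan is to produce, by a constrained energy minimization in the order interval between $v_1$ and $v_2$, a positive weakly stable solution of \eqref{PAb} lying strictly below the maximal solution $V$; this contradicts \eqref{uniqsPA}, since, as recalled in the text, $V$ is itself weakly stable and is therefore the unique positive stable solution.

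First I would let $\hat v$ minimize $J_\alpha$ over $\mathcal{C}=\{w\in\HH:\ v_1\le w\le v_2\}$. The set $\mathcal{C}$ is nonempty, convex and weakly closed in $\HH$, while $J_\alpha$ is coercive (using Lemma \ref{CSH} to absorb the $\int F(w)$ term, exactly as in the proof of Theorem \ref{thmPAb}) and weakly lower semicontinuous, so a minimizer $\hat v$ exists. The crucial point is that $\hat v$ is in fact a genuine — hence, by elliptic regularity, classical — solution of \eqref{PAb}. This is the standard truncation argument, as in the proof of Theorem \ref{energy}: one tests the minimality of $\hat v$ against $\min(\hat v+t\psi,v_2)$ and $\max(\hat v-t\psi,v_1)$ for $\psi\in\HH$, $\psi\ge0$, and uses that $v_2$ is a supersolution and $v_1$ a subsolution to control the truncated parts, obtaining $\langle J_\alpha'(\hat v),\psi\rangle=0$ for all $\psi\in\HH$. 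Thus $\hat v$ solves \eqref{PAb} with $0<v_1\le\hat v\le v_2<V$ on $\R^{N-1}$, so $\hat v$ is positive and $\hat v\neq V$.

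Next I would show that $\hat v$ is weakly stable. Fix $\psi\in\CC^\infty_c(\R^{N-1})$ with $\psi\ge0$ and put $Q(\varphi)=\int_{\R^{N-1}}|\nabla\varphi|^2+\big(\alpha g-f'(\hat v)\big)\varphi^2$. By the strong maximum principle, either $\hat v<v_2$ on $\supp\psi$, in which case $\hat v+t\psi\in\mathcal{C}$ for all small $t>0$; or $\hat v\equiv v_2$ on all of $\R^{N-1}$ (two ordered solutions of the same equation), in which case, since $v_1<v_2$ everywhere (again by the strong maximum principle, $v_1$ being a subsolution lying below the solution $v_2$ with $v_1\not\equiv v_2$), $\hat v-t\psi\in\mathcal{C}$ for all small $t>0$. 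In either case, minimality of $\hat v$ together with the second–order expansion of $J_\alpha$ at the critical point $\hat v$ gives $0\le J_\alpha(\hat v\pm t\psi)-J_\alpha(\hat v)=\tfrac{t^2}{2}Q(\psi)+o(t^2)$, whence $Q(\psi)\ge0$. Since $f'(\hat v)$ is bounded (the range of $\hat v$ is a compact subset of $(0,1)$), $Q$ is continuous on $\HH$, and since the nonnegative functions of $\CC^\infty_c(\R^{N-1})$ are dense in $\{\varphi\in\HH:\varphi\ge0\}$, we conclude that $Q(\varphi)\ge0$ for every $\varphi\in\HH$ with $\varphi\ge0$; in particular $Q(\phi)\ge0$ for the positive principal eigenfunction $\phi$ of the linearized operator at $\hat v$, so $\lambda_1[\hat v]\ge0$. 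Hence $\hat v$ is a positive weakly stable solution of \eqref{PAb}, and by \eqref{uniqsPA} this forces $\hat v=V$, contradicting $\hat v\le v_2<V$. Therefore $v_1\equiv v_2$.

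The main obstacle is the second step: checking that the constrained minimizer actually solves \eqref{PAb} and not merely a variational inequality, i.e. that the obstacles $v_1,v_2$ do not enter the Euler–Lagrange equation. This is precisely where the hypotheses that $v_1$ is a subsolution and $v_2$ a solution are used, via the truncation argument. A secondary technical point is the passage, in the stability step, from compactly supported test functions to the principal eigenfunction, which is handled by density in $\HH$ and the continuity of the quadratic form $Q$; in contrast with the earlier sections, no refined decay estimate on $\hat v$ or on $\phi$ is required here.
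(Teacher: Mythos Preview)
Your argument is correct and reaches the same contradiction as the paper---a positive weakly stable solution of \eqref{PAb} lying strictly below $V$---but the route is genuinely different. The paper first uses \eqref{uniqsPA} to infer $\lambda_1[v_2]<0$, then perturbs $v_2$ downward by $\epsi\varphi_2$ (with $\varphi_2>0$ the principal eigenfunction at $v_2$) to manufacture a strict super-solution $\overline v=v_2-\epsi\varphi_2$; a separate comparison, exploiting the exponential decay of $v_1,v_2,\varphi_2$ and the monotonicity of $f$ near $0$, is then needed to show $v_1\le\overline v$ on all of $\R^{N-1}$, after which the classical fact that an ordered pair of sub/super-solutions encloses a stable solution gives the contradiction. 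You bypass both the eigenfunction perturbation and the comparison at infinity by minimizing $J_\alpha$ directly over $[v_1,v_2]$ and handling the boundary case $\hat v\equiv v_2$ through the strict inequality $v_1<v_2$ in the second-variation step. This is more self-contained and avoids the delicate argument outside $B_R$; the paper's version, on the other hand, makes the role of the instability of $v_2$ explicit and does not need to discuss whether the constrained minimizer solves the equation rather than a variational inequality. (A minor remark: your pointer to ``the proof of Theorem~\ref{energy}'' for the truncation argument is off---that theorem treats the unconstrained KPP minimization; the technique you invoke is the standard one for constrained minimization between ordered sub/super-solutions, but it is not actually carried out in the paper.)
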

\begin{proof}
The proof follows the observation in \cite{BN}. However, it requires new elements in view of the unbounded domain. If $v_1<v_2$, let $\varphi_2$ be a principal eigenfunction of the linearized problem corresponding to $\lambda_1[v_2]$. Since $0$ and $V$ are the only stable solutions, $\lambda_1[v_2]<0$. We claim that for $\ep >0$ sufficiently small, $\vm=v_1-\ep \varphi_2$ is a super-solution of \eqref{PAb}. Indeed
\begin{eqnarray*}
-\Delta \vm +\alpha g(y) \vm -f(\vm)&=&f(v_2)-f(\vm)-f'(v_2)\ep \varphi_2 -\lambda_1[v_2]\ep \varphi_2\\
&=& \left( \frac{f(v_2)-f(v_2-\ep \varphi_2) }{\ep \varphi_2} -f'(v_2)-\lambda_1[v_2] \right) \ep \varphi_2.
\end{eqnarray*}
The right hand side is positive if $\ep>0$ is sufficiently small.

Next, given $R>0$, we can choose $\ep>0$ small enough so that $v_1<v_2-\ep \varphi_2 $ in $\overline{B_R}$. We choose $R$ so that $v_1(y) \leq \delta$ for all $|y|\geq R$ and $f$ is decreasing on $[0,\delta]$. We claim that then $v_1\leq v_2-\ep \varphi_2$ in $\R^{N-1}\setminus \overline{B_R}$. Argue by contradiction. In this were not the case, then, since $v_1$, $v_2$ and $\varphi_2$ converge to 0 at infinity, there exists $y$, $|y|>R$ such that 
$$
\min_{\R^{N-1}} \{ v_2-\ep \varphi_2-v_1\}=v_2(y)-\ep \varphi_2(y)-v_1(y) <0
$$
This implies that $0< \vm(y)<v_1(y)\leq \delta$. Denote $L$ the operator $L=-\Delta +\alpha g(y)$. Since $0\leq L(\vm-v_1)-(f(\vm)-f(v_1))$ and $f(\vm(y))-f(v_1(y))>0$, at the point $y$ we get $L(\vm-v_1)(y)> 0$. Therefore, we have reached a contradiction. This shows that $v_1\leq v_2-\ep \varphi_2$. Now we have a super-solution $\vm$ above a sub-solution $v_1$. This implies that there exists a stable solution $v$ such that $v_1 \leq v \leq v_2-\ep\varphi_2 <V$. This however is in contradiction with condition \eqref{uniqsPA}.
\end{proof}

From this property, we derive the following useful consequence.
\begin{proposition}
Let $\alpha \in (0,\alpha_*)$ and let $W$ be the maximal solution of equation \eqref{PAb} with the value $\alpha_*$ of the parameter. Then, any other solution $v$ of \eqref{PAb} with parameter $\alpha$ that is not the maximal solution cannot be above $W$.
\end{proposition}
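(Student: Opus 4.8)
The plan is to argue by contradiction and reduce the statement to the Proposition proved just above. Suppose $v$ solves \eqref{PAb} with parameter $\alpha\in(0,\alpha_*)$, is \emph{not} the maximal solution $V=V_\alpha$, and yet satisfies $v\geq W$ on $\R^{N-1}$. The crucial point is the monotone dependence on the parameter: since $W$ solves \eqref{PAb} with the \emph{larger} parameter $\alpha_*$, it is a subsolution of \eqref{PAb} with parameter $\alpha$, because
$$
\Delta W+f(W)-\alpha g(y)W=(\alpha_*-\alpha)\,g(y)\,W\geq 0,
$$
using $g\geq 0$ and $W>0$. (By Theorem~\ref{thmPAb}, $W$ is a positive solution since $\alpha_*\in(0,\alpha^*]$; in fact $0<W\leq 1$ by the maximum principle.)

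Next I would record the strict inequalities needed to invoke the previous Proposition. On the one hand, since $v$ is not the maximal solution for $\alpha$ we have $v\leq V$ and $v\not\equiv V$; the difference $V-v\geq 0$ satisfies a linear elliptic equation with bounded zeroth-order coefficient (here $f\in\CC^1$ and $0\leq v,V\leq 1$), so the strong maximum principle gives $v<V$ on all of $\R^{N-1}$. On the other hand $W\not\equiv v$: otherwise $v=W$ would solve \eqref{PAb} for both $\alpha$ and $\alpha_*$, forcing $(\alpha_*-\alpha)\,gW\equiv 0$, which is impossible since $\alpha_*>\alpha$, $g>0$ on $\R^{N-1}\setminus\{0\}$ and $W>0$. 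Thus the pair $(v_1,v_2)=(W,v)$ satisfies $0<v_1\leq v_2<V$ with $v_1$ a subsolution, $v_2$ a non-maximal solution of \eqref{PAb} (parameter $\alpha\in(0,\alpha_*]$), and $v_1\not\equiv v_2$ — in direct contradiction with the Proposition above. Hence no non-maximal solution $v$ for the parameter $\alpha$ can lie above $W$.

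The argument is short, and there is no serious analytic obstacle; the real content has already been invested in the monotone dependence on $\alpha$ and in the rigidity statement established under \eqref{uniqsPA}. The only points that call for a little care are bookkeeping ones: checking that $W$ is well defined and positive (Theorem~\ref{thmPAb}), that the exponential decay at infinity furnished by Corollary~\ref{decry} makes the strong-maximum-principle comparisons on the unbounded domain $\R^{N-1}$ legitimate, and that the hypotheses of the previous Proposition — in particular that $v_2=v$ is \emph{strictly} below the maximal solution $V=V_\alpha$, and that condition \eqref{uniqsPA} is assumed throughout — are verified verbatim.
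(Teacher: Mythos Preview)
Your proof is correct and follows exactly the same approach as the paper: the paper's proof is the single sentence ``This immediately follows from the previous proposition as $W$ is a sub-solution of the equation for the value $\alpha<\alpha_*$ and $W<V$,'' and you have simply spelled out the verifications (that $W$ is a sub-solution via the sign of $(\alpha_*-\alpha)gW$, that a non-maximal $v$ satisfies $v<V$ by the strong maximum principle, and that $W\not\equiv v$) needed to invoke that proposition.
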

This immediately follows from the previous proposition as $W$ is a sub-solution of the equation for the value $\alpha<\alpha_*$ and $W<V$. 

A consequence of this proposition is 
\begin{proposition}
\label{Visole}
For $\alpha \in (0,\alpha_*]$ and under condition \eqref{uniqsPA}, the maximal solution $V$ is isolated in $L^\infty$ topology. Therefore, there exists $\theta_1>\theta$ such that if $v$ is a solution of \eqref{PAb} with $v(0)\geq \theta_1$ then $v\equiv V$.
\end{proposition}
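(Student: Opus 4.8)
The second assertion is a routine consequence of the isolation of $V$, so the heart of the matter is to prove that $V$ is isolated in $L^{\infty}(\R^{N-1})$ among solutions of \eqref{PAb}. Indeed, by Corollary \ref{decry} every solution $v\not\equiv 0$ of \eqref{PAb} satisfies $0<v<1$ and tends to $0$ at infinity with a decay rate independent of $v$; combined with interior elliptic estimates this makes the set of solutions compact in $L^{\infty}$, and any $L^{\infty}$–limit of solutions is again a solution. Since $v\le V$ for every solution $v$, one has $\sup\{v(0):v\ \text{a solution}\}=V(0)$; if the conclusion failed (necessarily $V(0)>\theta$, otherwise it is vacuous) one could pick solutions $v_n\not\equiv V$ with $v_n(0)\to V(0)$, extract an $L^{\infty}$–limit $v_\infty$ which is a solution with $v_\infty\le V$ and $v_\infty(0)=V(0)$, hence $v_\infty\equiv V$ by the strong maximum principle applied to $V-v_\infty$, so $v_n\to V$, contradicting isolation. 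Thus I would concentrate on proving the isolation.

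Suppose $v_j\to V$ in $L^{\infty}$ with each $v_j$ a solution of \eqref{PAb} and $v_j\not\equiv V$. By maximality and the strong maximum principle $0<v_j<V$, and since $v_j\le V\le 1$ with $V\in\HH$ one also has $v_j\in\HH$. Multiplying the equation by $v_j$ and integrating by parts (legitimate by the exponential decay of Corollary \ref{decry}) gives $J_\alpha(v_j)=\int_{\R^{N-1}}\bigl(\tfrac12 f(v_j)v_j-F(v_j)\bigr)$; since $\tfrac12 f(s)s-F(s)=o(s^{2})$ near $0$ and $v_j\le V$ with the uniform decay, dominated convergence yields $J_\alpha(v_j)\to J_\alpha(V)$. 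By Proposition \ref{PAminJ} together with \eqref{uniqsPA}, $V$ is the global minimum of $J_\alpha$ over $\HH$, with $J_\alpha(V)<0$ when $\alpha<\alpha_*$ (and $J_{\alpha_*}(V)\le 0$). Moreover $f'(v_j)\to f'(V)$ uniformly, so $\lambda_1[v_j]\to\lambda_1[V]$.

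Now a dichotomy. If $\lambda_1[v_j]\ge 0$ for infinitely many $j$, then such a $v_j$ is a positive weakly stable solution distinct from $V$, contradicting \eqref{uniqsPA}. So assume $\lambda_1[v_j]<0$ for all large $j$, with principal eigenfunction $\varphi_j>0$, $\|\varphi_j\|_{\infty}=1$, of the linearized operator $-\Lap+\alpha g-f'(v_j)$. For $\epsi>0$ small (depending on $j$), $v_j+\epsi\varphi_j$ and $v_j-\epsi\varphi_j$ are respectively a strict sub- and a super-solution of \eqref{PAb}, one has $0\le v_j-\epsi\varphi_j<v_j+\epsi\varphi_j\le 1$ (the lower bound uses that $\varphi_j$, being the principal eigenfunction of a strictly more coercive operator than the one governing the decay of $v_j$, decays at infinity at least as fast as $v_j$, so $\varphi_j/v_j$ stays bounded), and $J_\alpha(v_j\pm\epsi\varphi_j)<J_\alpha(v_j)$ because $J_\alpha''(v_j)[\varphi_j,\varphi_j]=\lambda_1[v_j]\,\|\varphi_j\|_{L^{2}}^{2}<0$. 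The parabolic flow of $\partial_t z-\Lap z+\alpha g z=f(z)$ issued from the sub-solution $v_j+\epsi\varphi_j$ increases, staying below $\min(1,V)$, to a positive weakly stable solution $\le V$, hence to $V$ by \eqref{uniqsPA}; the flow issued from the super-solution $v_j-\epsi\varphi_j$ decreases, trapped in $[0,v_j]$, to a weakly stable solution which can only be $0$ (a positive one would have to equal $V>v_j$). Concatenating (i) the reversed flow from $v_j-\epsi\varphi_j$ back to $0$, (ii) the segment $s\mapsto v_j+s\varphi_j$, $s\in[-\epsi,\epsi]$, and (iii) the flow from $v_j+\epsi\varphi_j$ forward to $V$ produces a continuous path in $\HH$ from $0$ to $V$ along which $J_\alpha\le J_\alpha(v_j)$, the maximum being attained at $v_j$. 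Since $\lambda_1[0]>0$ in the bistable case, $0$ is a strict local minimum of $J_\alpha$ on $\HH$ (indeed $J_\alpha''(0)$ is coercive), and $V$ is the global minimum; hence the mountain-pass level $c_{\mathrm{MP}}=\inf_{\gamma}\max_t J_\alpha(\gamma(t))$ over paths from $0$ to $V$ is strictly positive, while the path just built gives $c_{\mathrm{MP}}\le J_\alpha(v_j)$. Letting $j\to\infty$ yields $0<c_{\mathrm{MP}}\le J_\alpha(V)\le 0$, a contradiction. Hence $V$ is isolated.

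I expect the main difficulty to lie in the unstable branch of the dichotomy, specifically in making the path from $0$ to $V$ rigorous: verifying that the parabolic trajectories converge in the full $\HH$–norm (not merely locally) to $0$ and to $V$, and controlling the sign of $v_j-\epsi\varphi_j$, which amounts to comparing the decay at infinity of the two principal eigenfunctions. Once that path is in place, the energy comparison $c_{\mathrm{MP}}\le J_\alpha(v_j)\to J_\alpha(V)<c_{\mathrm{MP}}$ closes the argument at once; the remaining steps (compactness, the reduction of the second assertion, the stable branch of the dichotomy via \eqref{uniqsPA}) are standard.
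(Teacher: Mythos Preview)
Your argument is essentially correct, but it takes a markedly different and considerably longer route than the paper's.

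The paper does not argue variationally at all. It first proves (in the two propositions immediately preceding this one) that under \eqref{uniqsPA} there is no pair $0<v_1\le v_2<V$ with $v_1$ a sub-solution and $v_2$ a non-maximal solution; in particular, taking $v_1=W$ the maximal solution for the parameter value $\alpha_*$ (which is a sub-solution for every $\alpha\le\alpha_*$ and satisfies $W<V$), one gets that \emph{no} non-maximal solution $v$ can lie everywhere above $W$. A maximum-principle comparison outside a fixed ball $\overline{B_R}$ (the same $R$ used in Proposition~\ref{comparison}) shows that $v\ge W$ on $\overline{B_R}$ would force $v\ge W$ on all of $\R^{N-1}$; hence every $v\not\equiv V$ must dip below $W$ somewhere in $\overline{B_R}$, whence $\|V-v\|_{L^\infty}\ge \min_{\overline{B_R}}(V-W)=\delta>0$. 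This gives isolation in two lines once the preceding structural proposition is available, and the second assertion then follows by compactness exactly as you say.

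Your approach replaces this order-theoretic barrier $W$ by a mountain-pass energy barrier: you build, through each unstable $v_j$, a continuous path from $0$ to $V$ along which $J_\alpha\le J_\alpha(v_j)$, and then let $j\to\infty$ to contradict the strict positivity of the mountain-pass level (which holds because $\lambda_1[0]>0$ in the bistable case). This is a valid and rather elegant global-variational argument, and it is self-contained in the sense that it does not rely on the preceding proposition about ordered pairs. The price you pay is exactly the technical debt you flag: continuity in $\HH$ of the parabolic trajectories at the endpoints $0$ and $V$, and the pointwise control $v_j-\epsi\varphi_j\ge 0$ via a decay-rate comparison between $\varphi_j$ and $v_j$. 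Both can be made rigorous (the decay comparison goes through because $\lambda_1[v_j]<0$ makes the potential in the eigenvalue equation strictly larger at infinity than the one governing $v_j$), but they require real work that the paper avoids entirely.

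In short: both proofs are correct; the paper's is shorter because it leverages the earlier ``no ordered sub-solution/solution pair below $V$'' proposition to produce a fixed comparison profile $W$, whereas you manufacture the separation directly from the variational structure. Your method would generalize better to situations where no such explicit barrier $W$ is available.
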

As we have done before,we can prove that if $v$ is a solution such that $v\geq W$ in $\overline{B_R}$, then $v\geq W<V$ in $\R^{N-1}$. Then any solution $v\not \equiv V$ is such that there exists $y \in \overline{B_R}$ such that $v(y)\leq W(y)$ and therefore $\|v-V\|_{L^\infty} \geq \min_{\overline{B_R}}V-W=\delta>0$. 

Now, if there exist a sequence $v_n$ of solutions of \eqref{PAb} such that $v_n(0)\rightarrow V(0)$ then by elliptic estimates $v_n\rightarrow W$ a positive solution of \eqref{PAb} and $W(0)=V(0)$ so $W\equiv V$ by the maximum principle which contradicts the fact that $V$ is isolated. 
  

\section{Traveling fronts for a bistable non-linearity}
\label{section: TFbist}

In this section we assume that $f$ if of bistable type and satisfies \eqref{fbistable}-\eqref{fcpos}. In addition, we assume that $0<\alpha<\alpha_*$ and that condition \eqref{uniqsPA} is fulfilled. Therefore, there exists a unique non-zero stable solution $V(y)=V_\alpha(y)$ of the profile equation \eqref{PAb}. Therefore $V>0$, $J_\alpha(V)=\min \{ J(w), \; w\in \HH \}$, $\lambda_1[V] \geq 0$ and $V$ is isolated in the $L^\infty$ topology. Furthermore $V$ is the maximal solution. Any other non-zero solution $w$ satisfies $0<w<V$ in $\R^{N-1}$ and $\lambda_1[w]<0$ where $\lambda_1[w]$ is the principal eigenvalue of the linearized problem defined in definition \ref{defvp}.

In this section, we prove the existence of a traveling front solution of \eqref{main} representing an invasion of $0$ by the state $V$ at positive speed. Such a solution is given as a pair $(c,u)$ of 
\begin{equation}
\label{FPb}
\begin{cases}
-\Delta u -c \partial_1 u+\alpha g(y)u=f(u) \quad \text{in } \R^N \\
u(-\infty,y)=V(y), \quad u(+\infty,y)=0   
\end{cases}
\end{equation}
with $c<0$ and $u:\R^N\rightarrow (0,1)$.

We follow the construction of a solution given above. Namely, let $a\geq 1$ and in the slab $\Sigma_a=(-a,a)\times \R^{N-1}$, consider the problem
\begin{equation}
\label{FPab}
\begin{cases}
-\Delta u -c\partial_1 u +\alpha g(y) u=f(u) \quad \text{in } \Sigma_a,\\
u(-a,y)=V(y), \quad u(+a,y)=0.   
\end{cases}
\end{equation}
We recall that for any $c\in \R$, for $a$ fixed, there exists a unique solution $u=u^c$ of \eqref{FPab}. Furthermore, $0<u<V$ and $\partial_1 u <0$ in $\Sigma_a$.The mapping $c\mapsto u^c$ is decreasing. 

Up to here, the procedure is the same as before. From this point on however, we need to modify the above argument since we used the fact that $f$ was positive. 

Our first task is to prove the following
\begin{proposition}
There exists a unique $(c_a,u_a)$ such that $u_a$ is a solution of \eqref{FPab} for speed $c_a$ and $u_a$ satisfies the normalization condition 
\begin{equation}
\label{NC}
\max_{y\in R^{N-1}} u_a(0,y)=\theta.
\end{equation}
Let us first prove the existence of $c_a$. The uniqueness is clear.

The parameter $c_a$ is bounded independently of $a\geq 1$. Moreover, 
$$
\liminf_{a\rightarrow +\infty} c_a \geq 0.
$$
\end{proposition}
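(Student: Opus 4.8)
The plan is to establish the three assertions in turn: existence of $c_a$ satisfying the normalization, then a uniform upper bound on $c_a$, then the lower bound $\liminf_{a\to+\infty}c_a\geq 0$. For the existence, I would use the monotonicity and continuity of $c\mapsto u^c$ (in $L^\infty(\Sigma_a)$, which follows as in Theorem \ref{boite} by uniqueness plus a priori estimates). Define $m(c)=\max_{y}u^c(0,y)$. This is a continuous, nonincreasing function of $c$. I would show that for $c$ very negative, $u^c$ is close to $V$ on compact sets (hence $m(c)>\theta$ since $\max V>\theta$), and for $c$ very large, $u^c$ is small (hence $m(c)<\theta$): indeed for large $c$ one slides $u^c$ against a suitable explicit exponential super-solution built from $\psi_\beta$ and a linear ODE in $x_1$, exactly as in Theorem \ref{ca}, using that $f(s)\le Ks$. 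By the intermediate value theorem there exists $c_a$ with $m(c_a)=\theta$. (Strictly, one should note $m$ is monotone but perhaps only weakly; however since $c_1<c_2$ forces $u^{c_1}>u^{c_2}$ strictly by the maximum principle wherever both are positive, and the maximum over $y$ at $x_1=0$ is attained at an interior point, $m$ is in fact strictly decreasing on the range where $0<m(c)<1$, so $c_a$ is unique.)

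For the uniform upper bound on $c_a$, I would adapt the argument of part (ii) of Theorem \ref{ca}: the super-solution $w(x_1,y)=z(x_1)k\psi_\beta(y)$ with $z$ solving $z''+cz'-\lambda_\beta z=0$, $z(-a)=1$, $z(a)=0$, is a super-solution of \eqref{FPab} for every $c$ for which the relevant inequalities hold, provided $\beta<\alpha$ is chosen with $\lambda_\beta<0$ (here one uses that $\lambda_1[V]\ge 0 > \lambda_\beta$ for suitable $\beta$, or more simply that the linearized operator at $0$ has negative principal eigenvalue for $\alpha$ small — wait, in the bistable case $\lambda_\alpha$ at $0$ is $f'(0)>0$-related, so one instead works with the principal eigenvalue near $V$; the cleanest route is to use $\psi_\beta$ and the estimate $f(s)\le f'(0)s$ near $0$ together with $f\le Ks$ globally, giving $w$ a super-solution once $(\alpha-\beta)g+\text{(eigenvalue)}\ge 0$ outside a ball and $k\psi_\beta\ge V$). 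Evaluating $w(0,0)\le e^{-ca/2}k$ forces $m(c)=\max_y u^c(0,y)\le w(0,0)<\theta$ as soon as $c>C_0$ for a constant $C_0$ independent of $a$ (once $a$ is large enough, but $m(c)$ is decreasing so this bounds $c_a$ for all large $a$; small $a$ are finitely many and harmless, or handled by the same super-solution directly). Hence $c_a\le C_0$ uniformly.

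The lower bound $\liminf_{a\to\infty}c_a\ge 0$ is the main obstacle and is where the new energy/stability ideas enter — this is precisely the point the introduction flags as replacing Matano's result and the exponential-decay arguments of \cite{BN91}. The idea: suppose for contradiction that along a subsequence $c_{a_j}\to c_\infty<0$ with $a_j\to\infty$. Passing to the limit in $\CC^2_{loc}$ (using uniform elliptic estimates) yields a solution $u$ on $\R^N$ of \eqref{FPb} with speed $c_\infty<0$, $0\le u\le V$, $\partial_1 u\le 0$, and $\max_y u(0,y)=\theta$; the monotonicity in $x_1$ forces $u(-\infty,\cdot)=V$ and $u(+\infty,\cdot)=0$, so $u$ is a genuine traveling front of negative speed. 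Now I would exploit the energy: multiply the profile equation by $\partial_1 u$ and integrate over slabs, which for a front connecting $V$ to $0$ gives a relation of the form $c\int_{\R^N}(\partial_1 u)^2 = J_\alpha(0)-J_\alpha(V) = -J_\alpha(V)$. Since $J_\alpha(V)=\min J_\alpha<0$ by Proposition \ref{PAminJ}, the right-hand side is strictly positive, while $\int(\partial_1 u)^2>0$ (as $u$ is nonconstant), forcing $c>0$ — contradicting $c_\infty<0$. The delicate points to be handled carefully are: justifying the integration by parts (the decay of $u-V$ as $x_1\to-\infty$ and of $u$ as $x_1\to+\infty$, together with the exponential decay in $y$ from Corollary \ref{decry}, so the boundary terms at $x_1=\pm\infty$ produce exactly $J_\alpha(0)$ and $J_\alpha(V)$ and the lateral terms in $y$ vanish), and checking that the front-energy identity indeed reads with $J_\alpha$ evaluated at the stationary profiles. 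I would therefore first record the decay of $u$ and $\nabla u$ (exponential in $|y|$ uniformly in $x_1$, and $\|u(x_1,\cdot)-V\|+\|u(-x_1,\cdot)\|\to0$ exponentially in $x_1$ by a standard sub/super-solution barrier near the ends), then carry out the slab integration and let the slab exhaust $\R^N$.
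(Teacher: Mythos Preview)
Your upper-bound argument is confused: in the bistable case $f'(0)<0$, so the principal eigenvalue of $-\Delta+\alpha g-f'(0)$ at $0$ is \emph{positive}, and the KPP-type super-solution $z(x_1)k\psi_\beta(y)$ from Theorem~\ref{ca} does not work as written. The paper's route is much simpler: compare with the one-dimensional bistable problem $-z''-\gamma z'=f(z)$, $z(-a)=1$, $z(a)=0$, normalized by $z(0)=\theta$. Since $z$ is a super-solution of \eqref{FPab} (and $V\le 1$), one gets $u^{\gamma^a}<z$, hence $c_a<\gamma^a$; the $\gamma^a$ converge to the 1D bistable speed, giving a uniform bound.

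The serious gap is in your lower bound. You pass to a limit $(c_\infty,u)$ with $c_\infty<0$ and assert ``the monotonicity in $x_1$ forces $u(-\infty,\cdot)=V$''. This is not true in general: monotonicity only gives that $u_-:=u(-\infty,\cdot)$ is \emph{some} positive solution of \eqref{PAb} with $\max u_-\ge\theta$, and by Theorem~\ref{thmPAb}(iii) there are other such solutions besides $V$. Your energy identity then reads $c_\infty\int(\partial_1 u)^2=-J_\alpha(u_-)$, and nothing guarantees $J_\alpha(u_-)<0$ unless $u_-=V$. (You also tacitly assume $c_{a_j}$ has a finite limit; the case $c_{a_j}\to-\infty$ is not addressed.)

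The paper circumvents both issues. From $c_a<0$ and the monotonicity $c\mapsto u^c$, it follows that the $c=0$ solution $v_a$ already satisfies $\max_y v_a(0,y)\le\theta$; one then works entirely at $c=0$. Crucially, instead of centering at $\theta$, the paper translates to a point $b_a$ where $v_a(b_a,0)=\theta_1$, with $\theta_1>\theta$ chosen via Proposition~\ref{Visole} (the isolation of $V$). Passing to the limit yields a monotone solution $w$ of the $c=0$ equation with $w(0,0)=\theta_1$; the choice of $\theta_1$ forces $w(-\infty,\cdot)=V$ (or $w(-d,\cdot)=V$ on a half-line). The energy identity at $c=0$ then gives $J(w(+\infty,\cdot))\le J(V)$, so $w(+\infty,\cdot)=V$ by uniqueness of the minimizer, and $w\equiv V$ --- contradicting $w(0,0)=\theta_1$. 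This is exactly where condition~\eqref{uniqsPA} and Proposition~\ref{Visole} enter; your approach does not use them and cannot close without them.
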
 
\begin{proof}
The bound from above is obtained simply by comparison with the one dimensional problem. Indeed, consider the ODE problem for $z=z(x_1)$:
\begin{equation}
\label{TF1D}
\begin{cases}
-z''-\gamma z'=f(z) \quad \text{in } (-a,a) \\
z(-a)=1, \quad z(+a)=0, \quad z(0)=\theta
\end{cases}
\end{equation}
It is known that there exists a unique value $\gamma^a$ for which \eqref{TF1D} has a (unique) solution $z$. Furthermore, $\lim_{a\rightarrow +\infty} \gamma^a=\gamma^*$ where $\gamma^*$ is the unique speed of traveling fronts for the 1D equation
\begin{equation*}
\begin{cases}
-z''-\gamma^* z'=f(z) \quad \text{in } (-a,a) \\
z(-\infty)=1, \quad z(+\infty)=0
\end{cases}
\end{equation*}
Comparing \eqref{TF1D} with \eqref{FPab}, we see that for each $c=\gamma^a$, the solution $z$ of \eqref{TF1D} is a super-solution of \eqref{FPab}, thus $z>u^{\gamma^a}$ and for all $y\in\R^{N-1}$, $u^{\gamma^a}(0,y)<z(0)=\theta$. Since $c\mapsto u^c$ is decreasing, we see that 
$$
\max_{y\in R^{N-1}}u^c(0,y) <\theta \text{ for all }c\geq \gamma_a.
$$
Assume now that $\max_{y\in R^{N-1}}u^c(0,y) <\theta$ for all $c\in \R$. Passing to the limit for $c\rightarrow -\infty$, $u^c$ converges toward a positive solution $v$ of \eqref{PAb} with $\max v <\theta$. By the maximum principle, it is impossible thus there exists a unique $c_a \in (-\infty,\gamma^a)$ such that \eqref{NC} is fulfilled.

Since $\gamma^a \rightarrow \gamma^*<\infty$ as $a\rightarrow +\infty$ and $a \mapsto  \gamma^a$ is a continuous function, this shows that $\displaystyle \sup_{a \geq 1} c_a <\infty$. 

Since $a \mapsto c_a$ is continuous, in order to complete the proof of the Proposition, it suffices to show that $\liminf_{a\rightarrow \infty} c_a \geq 0$. For this, we argue by contradiction and assume that for a sequence $a_j\rightarrow +\infty$ there holds $c_{a_j}<0$. For the sake of simplicity, we write $a$ instead of $a_j$. Since $c\mapsto u^c$ is decreasing, from this we infer that along this subsequence, the solution $v=v^a$ of 
$$
\begin{cases}
-\Delta v+\alpha g(y) v =f(v) \quad \text{in } \Sigma_a \\
v(-a,y)=V(y), \quad v(+a,y)=0
\end{cases}
$$  
satisfies $\max_{y\in \R^{N-1}}v(0,y)\leq \theta$.

Due to Proposition \ref{Visole}, there exist $\theta_1>\theta$ such that if an asymptotic profile $v$ solution of \eqref{PAb} verifies $v(0)\geq \theta_1$ then $v\equiv V$.

There is a point $b=b_j$, $-a<b < 0$ such that $v_a(b,0)=\theta_1$. We now translate the solution to center it on $x_1=b$. That is, we let $\vt_a(x_1,y)=v_a(x_1+b,y)$ defined for $x_1\in (-a-b,a-b)$ and $y \in  \R^{N-1}$. The interval $(-a-b,a-b)$ either converges (along a subsequence) to $(-\infty,+\infty)$ or to some $(-d,+\infty)$ with $0\leq d< \infty$. In both cases, by standard elliptic estimates, one can strike out a subsequence of $\vt_a$, denoted again $\vt_a$, such that $\vt_a$ converges locally to some function $w$ where $w$ satisfies:
\begin{equation}
\label{Eqw}
\begin{cases}
-\Delta w + \alpha g(y) w =f(w) \quad \text{in } (-d,+\infty)\times \R^{N-1} \\
\partial_1 w \leq 0, \quad w(0,0)=\theta_1.
\end{cases}
\end{equation}
In case the interval is converging to $(-d,+\infty)$, in addition we know that $w(-d,y)=V(y)$. 
If the interval converges to $\R$, then $\lim_{x\rightarrow -\infty} w(x_1,y)$ exists and is some function $W(y)$ which is then a solution of the profile equation \eqref{PAb}. But since $w(0,0)=\theta_1$, we know that $W(0)\geq \theta_1$. By the definition of $\theta_1$, this implies that $W\equiv V$. Therefore, denoting $d=\infty$ in case $(-a-b,a-b) \rightarrow  \R$, in both cases, we get
$$
\forall y\in \R^{N-1} \quad w(-d,y)=V(y)
$$
where now $0 \leq d\leq +\infty$. We also know that $w(+\infty,y)=\psi(y)$ exists with $0\leq \psi<V$.

Multiply \eqref{Eqw} by $\partial_1 w$ and integrate over $(-d,+\infty) \times \R^{N-1}$ to get
$$
\int_{\{x_1=-d\}} \frac{1}{2}(\partial_1 w)^2+J(\psi)-J(V)=0
$$
where $\partial_1 w=0$ if $d=\infty$. In all cases, we get
$$
J(V)\geq J(\psi)
$$
Since $V$ minimize $J_\alpha$, we obtain $V\equiv \psi$ and $w(x_1,y)=V(y)$ for all $x_1\in (-d,+\infty)$ but this contradicts the renormalization $w(0,0)=\theta_1$.

We have thus reached a contradiction. This shows that for large $a$, $c_a\geq 0$, which completes the proof of the Proposition.   
\end{proof}

Let us now turn to the proof of the existence of traveling front solutions of \eqref{FPb}. Since $c_a$ and $u_a$ are bounded, by standard elliptic estimates, we can strike out a sequence $a=a_j\rightarrow \infty$ (we continue to denote subsequences by $a$) such that $c_a\rightarrow c\geq 0$ and $u_a\rightarrow u$. We know that $(c,u)$ satisfies the equation
$$
- \Delta u +c \partial_1 u +\alpha g(y) u =f(u) \quad \text{ in } \R^N
$$
with $\partial_1 u \leq 0$ and $\max_{R^{N-1}}u(0,\cdot)=\theta$. It remains to identify the limits as $x_1\rightarrow \pm \infty$. These $\lim_{x_1\rightarrow \pm \infty} u(x_1,y)=u_{\pm}(y)$ exist and are solutions of the asymptotic profile equation \eqref{PAb}. Now since $0\leq u_+(y)=\lim_{x_1\rightarrow +\infty} u(x_1,y) \leq \theta$ and all positive solutions $w$ of \eqref{PAb} satisfy $\max w>\theta$, we have $u_+\equiv 0$. 

We claim that $u_- (y)=\lim_{x_1\rightarrow -\infty}u(x_1,y)$ coincides with $V(y)$. Clearly, $0<u_-\leq V$. Argue by contradiction that $u_- \not\equiv V$, implying $u_-<V$. By assumption, $u_-$ is an unstable solution of \eqref{PAb} in the sense that $\lambda_1[u_-]<0$. Let us construct a super-solution of the stationary equation, that is a $w$ with 
$$
-\Delta w+\alpha g(y) w \geq f(w)
$$
such that $w$ is a compact perturbation of $u_-$ and as close as we wish to $u_-$.

Consider the linearized equation about $u_-$:
$$
-\Delta \psi -f'(u_-(y))\psi +\alpha g(y) \psi= \lambda_1[u_-] \psi
$$
with $\lambda_1[u_-]<0$. We know that $\lambda_1[u_-]$ is the limit of the Dirichlet principal eigenvalue in a ball when the radius goes to infinity (This follows from the Rayleigh quotient minimization). Therefore, $R>0$ can be chosen sufficiently large so that the principal eigenvalue $\mu$ and associated eigenfunction $\psi$ of 
$$
\begin{cases}
-\Delta \psi +\alpha g(y)\psi-f'(u_-)\psi=\mu \psi \quad \text{ in } B_R \\
\psi=0 \quad \text{on } \partial B_R, \qquad \psi >0 \quad \text{in } B_R
\end{cases}
$$
satisfy $\mu<0$.

Consider the function $\zeta (x_1,y)=\cos (\omega x_1) \psi(y)$ defined for $x_1\in (-L,L)$ with $L=\frac{\pi}{2 \omega}$ and $|y|<R$. We note $D=(-L,L)\times B_R$. This function is positive and satisfies:
$$
\begin{cases}
-\Delta \zeta +\alpha g(y)\zeta - f'(u_-)\zeta=(\mu+\omega^2)\zeta \quad \text{ in } D \\
\zeta = 0 \quad \text{on } \partial D
\end{cases}
$$
Choose $L$ large enough so that $\mu+\omega^2<0$. Then let $w(x_1,y)=u_-(y)-\ep \zeta(x_1,y)$ with $\ep>0$ and $(x_1,y)\in D$. This function satisfies 
$$
-\Delta w+\alpha g(y) w - f(w)= \left( -(\mu+\omega^2)+ \frac{f(u_-) - f(u_--\ep \zeta)}{\ep \zeta} -f'(u_-)\right) \ep \zeta.
$$
Since $\mu+\omega^2<0$, we can choose $\ep$ sufficiently small so that 
$$
-\Delta w+\alpha g(y) w - f(w)\geq 0 \quad \text{ in } D \text{ and } w>0.
$$
Furthermore, because $\ep \zeta =0$ on $\partial D$ and $\ep \zeta >0$ in $D$, that is $w<u_-$ in $D$, if we extend $w$ by choosing $w(x_1,y)=u_-(y)$ for all $(x_1,y) \not \in D$, we have constructed a (generalized) super-solution of the problem (see e.g. \cite{BL80}). 

Let us now derive a contradiction. We consider two cases.

$\bullet$ Cases (i): Suppose $c>0$. Then $U(t,x_1,y)=u(x_1-ct,y)$ is a solution of the evolution equation
$$
\partial_t U - \Delta U +\alpha g(y) U=f(U) \qquad  t\in \R, \;(x_1,y)\in \R^N.
$$  
Now $U \xrightarrow{t\rightarrow -\infty}0$ locally uniformly in $(x_1,y)$. Furthermore, for all times $U(t,x_1,y)\leq u_-(y)$. Since $w$ is a compact perturbation of $u_-$ for a time $t_0$ sufficiently negative, we get 
$$
\forall (x_1,y)\in \R^N \quad U(t_0,x_1,y)\leq w(x_1,t).
$$ 
Now when $U(t,x_1,y) \xrightarrow{t\rightarrow +\infty} u_-(y)$ locally uniformly  and we get a contradiction since $U(t,x_1,y)\leq w(x_1,y) <u_-(y)$ for all $(x_1,y)\in D$. 

$\bullet$ Case (ii): The case that remains to be studied is $c=0$ (since we already have $c\geq 0$). Then $u(x_1,y)$ is a stationary solution of the same equation that $w$ is a super-solution of. Since $u(-\infty,y)=u_-(y)$ and $u(+\infty,y)=0$, and since $w=u_-$ outside a compact set, after a translation, we can assume that $u_h=u(x_1+h,y)\leq w(x_1,y)$(for large enough $h$). Define 
$$
h^*=\inf \{h\in \R, u(x_1+h,y) \leq w(x_1,y) \text{ in } \R^N \}.
$$
Clearly, $h^*>-\infty$ (for $w<u_-$ at some points). Then $w(x_1,y) \geq u(x_1+h^*,y)=u_{h^*}$ and $\min (w - u_{h^*})=0$ is necessarily achieved at a point of $\overline{D}$. Since $w(x_1,y)=u_-(y)>u(x_1+h,y)$ for all $h$ if $(x_1,y)\not \in D$, we see that the maximum  is achieved at an interior point of $D$. Writing $ w-u_{h^*}\geq 0$ as a super-solution of a linear elliptic equation in $D$, we derive a contradiction with the strong maximum principle. 

Therefore in all cases, the solution $u$ satisfies the limiting condition:
$$
u(-\infty,y)=V(y), \qquad u(+\infty,y)=0.
$$
Therefore $(c,u)$ is a solution of the traveling front equation \eqref{FPb}.


\section{The model of cortical spreading depression}
\label{section: CSD}

We consider here more general versions of the model \eqref{SD} described in the Introduction. The problems studied in this paper have the following general form
\begin{equation}
\label{Egen}
\partial_t u -\Delta u=h(y,u) \quad x=(x_1,y)\in \R^N.
\end{equation}
In the modeling context $N=2$ and $3$ are the cases of interest. As indicated in the Introduction, this equation also describes cortical spreading depressions (CSD). There the wave propagates in a medium composed of two different components, the gray and white matters of the brain, with a narrow transition area separating them. 

Thus we consider in this section functions $h(y,u)$ of the following type:
\begin{equation}
\label{f1}
h(y,u)=f(u) \text{ for } |y|\leq L_1
\end{equation}
\begin{equation}
\label{f2}
 h(y,u)\leq -mu \text{ for } |y|\geq L_2
\end{equation}
\begin{equation}
\label{f3}
h(y,u)+mu\xrightarrow{|y|\rightarrow +\infty} 0 \quad \text{uniformly for } u\in \R^+
\end{equation}
where $0<L_1\leq L_2 <\infty$ and $K\geq m>0$ are given parameters and $f$ is of bistable form. That is we assume that $f$ verifies conditions \eqref{fbistable}-\eqref{fcpos}of section \ref{section: APbist}. Note that in particular, we assume 
$$
\int_0^1 f(s)ds >0.
$$ 
We also assume that $y\mapsto h(y,s)$ is continuous and that $s\mapsto h(y,s)$ is Lipschitz continuous for all $s\in [0,1]$ (and $|y| \not =L_1$ in case $L_1=L_2$). Lastly we assume that
$$
\forall s\in [0,1] \; \forall y\in \R^{N-1} \quad h(y,s)\leq \max \{ f(s), -ms\}.
$$

\subsection{The asymptotic profile equation}

We start as usual with the profile equation
\begin{equation}
\label{APSD}
\begin{cases}
-\Delta V=h(y,V) \quad y\in \R^{N-1}, \\
V\geq 0, \quad V \text{ bounded.}
\end{cases}
\end{equation}
We recall that $\lambda_1[V]$ is the principal eigenvalue of the linearized equation about $V$. This eigenvalue can be defined as 
$$
\lambda_1[V]=\inf_{\varphi\in H^1(\R^{N-1})} \frac{\int |\nabla \varphi |^2 -\partial_2 h (y,V) \varphi^2
}{\int \varphi^2}.
$$
Associated with \eqref{APSD} is the energy functional:
$$
J(w)=\int_{\R^{N-1}} \left( \frac{1}{2} |\nabla w|^2 -H(y,w)\right) dy
$$
where $H(y,z)=\int_0^z h(y,s)ds$. Note that owing to condition \eqref{f2}, $J(w)$ is well defined for all $w\in H^1(\R^{N-1})$.
\begin{theorem}
\label{thmSD}
There exist critical radii $0<L_*\leq L^*<\infty$ with the following properties:
\begin{description}
\item[i)] For $L_2<L_*$, there is no solution other than $0$ to the asymptotic profile equation \eqref{APSD}.
\item[ii)] For $L_1>L^*$ (independently of $L_2$), there exists a maximum solution $V$ of \eqref{APSD} and this solution is stable in the sense that $\lambda_1[V]\geq 0$. 
\item[iii)] For all $L_1>L^*$, the minimum of the energy functional  is achieved at some non-zero function $V_J(y)$, i.e.
$$
J(V_J)=\min_{w\in H^1(\R^{N-1})} J(w) <0.
$$
\end{description}
\end{theorem}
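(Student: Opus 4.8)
\emph{Plan.} The three parts follow the pattern of Theorem~\ref{thmPAb}, with the weight $\alpha g(y)$ replaced by the absorption encoded in \eqref{f2}--\eqref{f3}. First I note that, exactly as in Corollary~\ref{decry}, every bounded non-negative solution $V$ of \eqref{APSD} satisfies $0\le V\le 1$ by the maximum principle and decays exponentially: since $h(y,V)\le -mV$ for $|y|\ge L_2$ one has $\Delta V=-h(y,V)\ge mV$ there, hence $\liminf_{|y|\to\infty}\Delta V/V\ge m>0$, so Theorem~\ref{BR} (together with global $L^p$ estimates for $\nabla V$) gives $V,\nabla V\in L^2(\R^{N-1})$ and one may integrate \eqref{APSD} against $V$ over $\R^{N-1}$.

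\emph{Part (i).} Using the integration by parts $\int h(y,V)V=\int|\nabla V|^2$, the bound $h(y,s)\le\max\{f(s),-ms\}\le m's$ for $s\in[0,1]$ with $m':=\sup_{s>0}f(s)/s>0$, and the sharper $h(y,s)\le -ms$ for $|y|\ge L_2$, one obtains
\[
\int_{\R^{N-1}}|\nabla V|^2+m\int_{\R^{N-1}\setminus B_{L_2}}V^2\ \le\ m'\int_{B_{L_2}}V^2 ,
\]
hence $\|\nabla V\|_{L^2}^2+m\|V\|_{L^2}^2\le (m+m')\int_{B_{L_2}}V^2$. By the Sobolev embedding of $H^1(\R^{N-1})$ (into $L^{2^{*}}$ if $N-1\ge3$, into every $L^q$ with $q<\infty$ if $N-1=2$, into $L^\infty$ if $N-1=1$) combined with H\"older's inequality on $B_{L_2}$, there are $C_N>0$ and $\kappa>0$ with $\int_{B_{L_2}}V^2\le C_N\,|B_{L_2}|^{\kappa}\,\|V\|_{H^1(\R^{N-1})}^2$. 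Choosing $L_*>0$ so small that $(m+m')\,C_N\,|B_{L_*}|^{\kappa}<\min(1,m)$ forces $V\equiv 0$ whenever $L_2<L_*$; this defines $L_*$, and $L_*<\infty$ will follow from part (iii).

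\emph{Parts (ii)--(iii).} For the maximal solution, note that $\bar z\equiv 1$ is a supersolution of $\partial_t z-\Delta z=h(y,z)$ since $h(y,1)\le\max\{f(1),-m\}=0$; hence the solution $z(t,y)$ with $z(0,\cdot)\equiv 1$ is non-increasing in $t$, satisfies $0\le z\le 1$ and (strong maximum principle) $z(t,y)<1$ for $t>0$, and converges by parabolic estimates to a solution $V$ of \eqref{APSD} with $0\le V<1$; any solution $w\in[0,1]$ is a stationary subsolution below $z(0,\cdot)$, so $w\le V$, i.e. $V$ is maximal. For (iii), minimise $J$ over $\{w\in H^1(\R^{N-1}):0\le w\le 1\}$: the truncation $w\mapsto\min(\max(w,0),1)$ does not increase $\int|\nabla w|^2$ and does not decrease $\int H(y,w)$ (because $h(y,\cdot)\le 0$ outside $(0,1)$, recalling the extension conventions), so it does not raise $J$ and does not change $\inf J$. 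With the cutoff $\zeta_R$ of Theorem~\ref{thmPAb}, for $R+1\le L_1$ one has $H(y,\zeta_R)=F(\zeta_R)$ and $J(\zeta_R)\le -F(1)|B_R|+C|B_{R+1}\setminus B_R|$, which is $<0$ once $R\ge R_0$ since $F(1)>0$ by \eqref{fcpos} and $|B_{R+1}\setminus B_R|/|B_R|\to 0$; setting $L^*:=R_0+1$ gives $\inf J<0$ for all $L_1>L^*$. A minimising sequence $(w_n)\subset[0,1]$ is bounded in $H^1$ (the gradient term and $\tfrac m2\int_{\R^{N-1}\setminus B_{L_2}}w_n^2$ are controlled by $J(w_n)$, and $\int_{B_{L_2}}w_n^2\le|B_{L_2}|$); passing to a weak $H^1$-limit $V_J$ with $0\le V_J\le 1$, lower semicontinuity holds because $\int|\nabla\cdot|^2$ is weakly l.s.c., dominated convergence handles $\int_{B_{L_2}}H(y,w_n)$ ($w_n\to V_J$ in $L^2(B_{L_2})$, $H$ bounded there), and on $\R^{N-1}\setminus B_{L_2}$ one has $H(y,s)\le -\tfrac m2 s^2\le 0$ for $s\ge 0$, so Fatou gives $\limsup\int H(y,w_n)\le\int H(y,V_J)$. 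Hence $J(V_J)=\inf J<0$, so $V_J\not\equiv 0$, $V_J$ solves \eqref{APSD}, and the strong maximum principle yields $0<V_J<1$. Since $V$ is maximal, $0<V_J\le V$, so the maximal solution is non-zero for $L_1>L^*$; this also gives $L_*\le L^*$ (if $L_*>L^*$, taking $L^*<L_1=L_2<L_*$ would contradict (i) together with the existence of $V_J$) and $L_*<\infty$. Finally, weak stability $\lambda_1[V]\ge 0$ is the analogue of the corresponding fact for the maximal solution in the bistable case: if $\lambda_1[V]<0$, the Dirichlet principal eigenvalue of $-\Delta-\partial_2 h(\cdot,V)$ on $B_R$ is $<0$ for $R$ large, and with $\varphi_R>0$ its eigenfunction, $V+\varepsilon\varphi_R$ (extended by $V$ outside $B_R$) is a generalized subsolution of \eqref{APSD} lying strictly above $V$ on $B_R$ and below the supersolution $1$ for $\varepsilon$ small; the sub/supersolution method then produces a solution above $V$, contradicting maximality.

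\emph{Main obstacle.} The delicate step is the energy part (iii): because $-H(y,\cdot)$ is negative wherever $F>0$, neither coercivity nor weak lower semicontinuity of $J$ on all of $H^1(\R^{N-1})$ is immediate, and the unbounded domain blocks a direct compactness argument. The device that makes it work is the a priori truncation to $\{0\le w\le 1\}$ (legitimate since larger or negative values only raise $J$), which bounds the minimising sequence and provides an integrable dominating function on $B_{L_2}$, combined with Fatou's lemma on the absorbing region $\{|y|\ge L_2\}$ where $-H\ge 0$. A secondary point is the integrability of $V$ and $\nabla V$ needed to justify the integration by parts in (i), which is supplied by the exponential decay via Theorem~\ref{BR}.
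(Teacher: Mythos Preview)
Your argument is correct. Parts (i) and (ii) follow the paper essentially verbatim: the same Sobolev--H\"older estimate for nonexistence, the same supersolution $\bar z\equiv 1$ for the maximal solution, and the same test function $\zeta_R$ to show $\inf J<0$. You add an explicit proof that $\lambda_1[V]\ge 0$ via a compactly supported perturbation $V+\varepsilon\varphi_R$; the paper simply invokes this as a well-known property of maximal solutions.

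The genuine methodological difference is in part (iii), the weak lower semicontinuity of $J$. The paper uses hypothesis \eqref{f3} in an essential way: for every $\varepsilon>0$ it finds $R(\varepsilon)$ so that $-H(y,s)$ is $\varepsilon$-close to $\tfrac{m}{2}s^2$ on $\{|y|\ge R\}$, then recognises $\tfrac12\int|\nabla w|^2+\tfrac{m}{2}\int_{|y|\ge R}w^2$ as (the square of) an equivalent $H^1$-norm and invokes lower semicontinuity of the norm, handling the compact part $\{|y|\le R\}$ by dominated convergence. Your route instead truncates the minimising sequence to $[0,1]$ from the outset, obtains coercivity and an $L^\infty$ dominating function on $B_{L_2}$ for free, and treats $\{|y|\ge L_2\}$ by Fatou's lemma using only $-H\ge 0$ there. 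This is more elementary and, notably, does not use \eqref{f3} at all; the price is that you must extend $h(y,\cdot)$ outside $[0,1]$ so that the truncation is admissible and so that the unconstrained Euler--Lagrange equation makes sense (you flag this as ``the extension conventions''). Both devices solve the same difficulty---lack of compactness on the unbounded domain---but yours trades the asymptotic hypothesis on $h$ for a structural one on the range of admissible competitors.
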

\begin{proof}
i) Since the equation implies that $-\Delta u +mu\leq 0$ for all $|y|\geq L_2$ in $\R^{N-1}$, and $u>0$ is bounded, by Theorem \ref{BR} we know that $u$ and $|\nabla u|$ have exponential decay as $|y|\rightarrow +\infty$. Then we get
$$
\min(1,m) \|u\|_{H^1(\R^{N-1})} \leq \int_{\R^{N-1}} |\nabla u| ^2 +m u^2 \leq \int_{B_{L_2}} \big( f(u) +m u \big)u \leq K \int_{B_{L_2}} u^2.
$$
We know, by Sobolev embedding and H\"{o}lder inequality, that 
$$
\int_{B_{L_2}} u^2 \leq \eta(L_2) \|u\|_{H^1(\R^{N-1})}
$$
where $\eta(L_2)\rightarrow 0$ as $L_2\rightarrow 0$. Therefore for $L_2$ small enough, these inequalities yield $u\equiv 0$.

ii) Next, since $1$ is a super-solution of the equation in $\R^{N-1}$, there exists a maximum solution of equation \eqref{APSD} that we denote $V$. By what we have just seen, $V\equiv 0$ for $L_2$ sufficiently small. Let us now show that $V>0$ for $L_1$ sufficiently large. 

Let us consider the energy restricted to the ball of radius $R\leq L_1$
$$
J_R(w)=\int_{B_R} \left(\frac{1}{2} |\nabla w|^2 -F(w) \right) dy
$$
where $F(z)=\int_0^1 f(s)ds$. We know (see the proof of Theorem \ref{thmPAb}) that for $R$ sufficiently large there exists a minimum $w_R$ of 
$$
J_R(w_R)=\inf_{w\in H^1_0(B_R)} J_R(w) <0.
$$
Then $w_R>0$ in $B_R$ and $w_R$ is solution of $-\Delta w_R=f(w_R)$ in $B_R$ and $w_R=0$ on $\partial B_R$. Extending $w_R$ by $0$ outside the ball $B_R$, we get a global (generalized) sub-solution. The solution $V$ is such that $V\geq w_R$ (since $V$ is the maximum solution). This implies that $V\not\equiv 0$ and therefore $V>0$ in $\R^{N-1}$ for $L_1 \geq R$.

iii) Now for $L_1\geq R$, clearly
$$
\inf_{w\in H^1(\R^{N-1})} J(w)\leq J_R(w_R)<0.
$$
Let us now show that the infimum is achieved.

Let $(w_n)$ be a minimizing sequence: $J(w_n)\rightarrow \inf J <0$ for $n\rightarrow +\infty$. Note that $J$ is bounded from below.
Writing
\begin{eqnarray*}
J(w_n)& \geq&  \frac{1}{2} \int_{ \R^{N-1}\setminus B_{L_2}}|\nabla w_n|^2+m {w_n}^2 + \int_{B_{L_2}}\frac{1}{2}|\nabla w_n|^2 -H(y,w_n) \; \\
& \geq &   \frac{1}{
2}\int_{ \R^{N-1}\setminus B_{L_2}}|\nabla w_n|^2+m {w_n}^2 +\int_{B_{L_2}}\frac{1}{2}|\nabla w_n|^2 -C+\ep {w_n}^2 \\
& =& -C|B_{L_2}|+\tilde{\ep}\|w_n\|_{H^1(\R^{N-1})}
\end{eqnarray*}
we can strike out a subsequence still denoted $(w_n)$ such that $w_n \rightarrow w$ weakly in $H^1(\R^{N-1})$. Now using \eqref{f3}, for every $\ep>0$, there exists $R=R(\ep)>0$ such that $\left| F(y,s)+\frac{m}{2}s^2 \right|<\ep s^2$ for all $|y|\geq R$ and all $s\in \R^+$ (there is no loss in generality in assuming $w_n\geq 0$ as ${w_n}^+$ is also a minimizing sequence). Therefore 
$$
J(w_n)=\frac{1}{2}\int_{ \R^{N-1}}|\nabla w_n|^2+\frac{m}{2} \int_{|y|\geq R} {w_n}^2 +r(\ep) - \int_{|y|\leq R} H(w_n,y)
$$
where $|r(\ep)|\leq C\ep$ for some constant $C>0$.

By compact injection of $H^1(\R^{N-1})\hookrightarrow L^2(B_R)$, we can assumme that $w_n \rightarrow w$ strongly in $L^2(B_{L_2})$. Then by standard arguments relying on Lebesgue's dominated convergence Theorem, we see that
$$
\int_{B_R}H(y,w_n)\rightarrow \int_{B_R} H(y,w).
$$
Owing to Sobolev embedding and H\:{o}lder inequality, it is straightforward to check that the quantity
$$
\frac{1}{2}\int_{ \R^{N-1}}|\nabla w_n|^2+\frac{m}{2} \int_{|y|\geq R} {w_n}^2
$$ 
defines the square of a norm equivalent to the usual $H^1(\R^{N-1})$ norm. Hence using the lower semi-continuity of the norm, we get
$$
\lim_{n\rightarrow \infty} J(w_n)=\inf J\geq \frac{1}{2}\int_{ \R^{N-1}}|\nabla w|^2+\frac{m}{2} \int_{|y|\geq R} {w}^2+r(\ep)-\int_{B_R} H(y,w).
$$
Now using again \eqref{f3}, we get:
$$
J(w)=\int_{ \R^{N-1}}|\nabla w|^2-\int_{\R^{N-1}} H(y,w) \leq \inf_{H^1(\R^{N-1})} J+2 |r(\ep)|.
$$
Since $\ep>0$ is arbitrarily small we get
$\displaystyle
J(w)=\inf_{ H^1 (\R^{N-1})} J.
$
\end{proof}
\begin{remark}
By using the method of \cite{BL80bis}, one can show that for $L_1$ large enough there exists a second solution in $\R^{N-1}$.
\end{remark}

We will now make use of the condition that the stable solution of \eqref{APSD} is unique. In the paper of Chapuisat and Joly \cite{CJ11}, it is argued by phase plane method, that for the case $N-1=2$, $L_1=L_2$ and $h(y,s)=-ms$ for $|y|\geq L_2$ that indeed this is the case. We note that it is an interesting open problem to derive such uniqueness results in more general situations or to complete the heuristic part of the argument of \cite{CJ11}.

\subsection{Traveling fronts for the CSD model}

In this section, we prove Theorem \ref{thmSDTF}. The proof is similar as in Section \ref{section: TFbist}. There we used that $h(y,u)= f(u)-\alpha g(y) u$ with $g\rightarrow +\infty$. But actually, the same properties that were entailed one can derived for $h(y,u)\leq -mu$ for large $|y|$. We start by constructing a solution of
\begin{equation}
\label{TFSDa}
\begin{cases}
\Delta u_a-c_a \partial_1 u_a =h(y,u_a) \quad \text{ in } (-a,a)\times \R^{N-1} \\
u_a(-a,y)=V(y), \quad u_a(+a,y)=0, 
\end{cases}
\end{equation}
that verifies
\begin{equation}
\label{normSD}
\sup_{y\in \R^{N-1}} u_a(0,y)=\theta
\end{equation}
for $a\geq 1$ and where $\theta$ is the unstable 0 of $f(u)$, that is $f(\theta)=0$ and $0<\theta<1$. We recall that $c_a$ is uniquely determined by the renormalization condition \eqref{normSD}. 

\hspace{-7mm}
\begin{minipage}{0.7\textwidth}
Let $\tf(u)=\max \{f(u), -m(u)\}$. Note that $\tf$ itself is bistable:
$$
\tf(0)=\tf(\theta)=\tf(1), \quad \tf(s)<0 \text{ in }(0,\theta), \quad \tf(s)>0 \text{ in }(\theta, 1).
$$
\end{minipage} \hfill 
\begin{minipage}{0.28\textwidth}
\includegraphics[width=5cm]{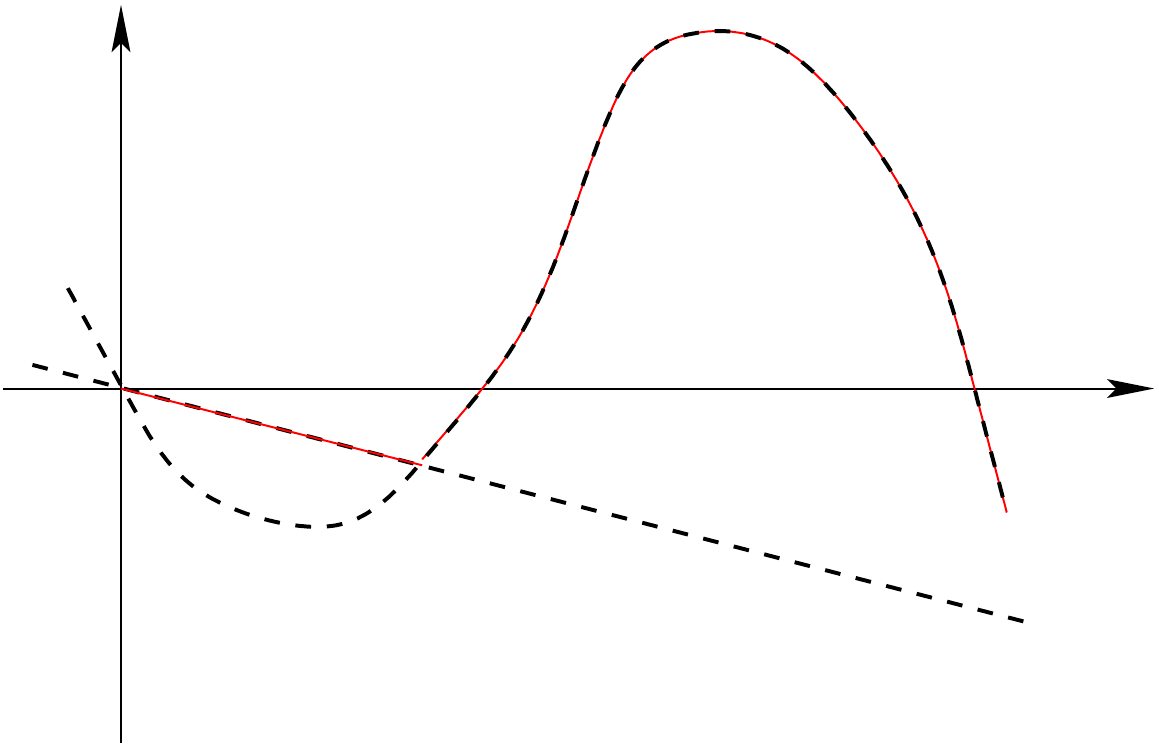}
\end{minipage}
We denote by $z_a^c$ the solution of 
\begin{equation}
\label{1DSD}
\begin{cases}
-z''-cz'=\tf(z) \\
z(-a)=1, \quad z(+a)=0.
\end{cases}
\end{equation}
The function $z_a^{c_a}$ is a supersolution of \eqref{TFSDa} thus $u_a \leq z_a^{c_a}$. In view of \eqref{normSD} this implies that $z_a^{c_a}(0)\geq \theta$ and this implies that $c_a\leq \gamma_a$ where $\gamma_a$ is the unique value of $c$ such that the solution of \eqref{1DSD} verifies $z_a^{\gamma_a}(0)=\theta$. This as before yields the upper bound for $c_a$.

The lower bound is achieved in the same manner as in the section \ref{section: TFbist} and the convergence for $a\rightarrow +\infty$ also.

\section*{Acknowledgments}

This study was funded by the European Research Council under the European Union's Seventh Framework Programme (FP/2007-2013)/ERC Grant Agreement n.321186 - ReaDi - Reaction-Diffusion Equations, Propagation and Modelling. Henri Berestycki was also partially supported by an NSF FRG grant DMS-1065979. Part of this work was carried out while he was visiting the Department of Mathematics at the University of Chicago. 

We wish to thank Laurent Desvillettes for bringing to our attention the population genetics model described in equations \eqref{DP1}-\eqref{DP2}.


\bibliographystyle{plain}
\bibliography{biblio}	

\end{document}